\newtheorem{thm}{Theorem}[section] 
\newtheorem{prop}[thm]{Proposition}
\newtheorem{lem}[thm]{Lemma}
\newtheorem{cor}[thm]{Corollary}
\newtheorem{conj}[thm]{Conjecture}
\theoremstyle{definition}
\newtheorem{defn}[thm]{Definition}
\newtheorem{notation}[thm]{Notation}
\newtheorem{ex}[thm]{Example}
\theoremstyle{remark}
\newtheorem{rem}[thm]{Remark}
\newcommand{\mg}{\mathfrak{m}}
\def\R{\mathbb{R}}
\def\Z{\mathbb{Z}}
\newcommand{\al}{\alpha}
\def\hat{\widehat}
\newcommand{\co}{\colon}
\newcommand{\pa}{\partial}
\DeclareMathOperator{\Hom}{Hom}
\DeclareMathOperator{\End}{End}
\DeclareMathOperator{\id}{id}
\newcommand{\im}{\operatorname{Im}}
\newcommand{\inv}{^{-1}}
\newcommand{\mi}{\underline}
\newcommand{\ma}{\overline}
\newcommand{\expr}{\leftrightharpoons}
\newcommand{\Dem}{\mathcal{D}}
\newcommand{\SC}{\mathcal{SC}}
\newcommand{\mt}{\emptyset}
\newcommand{\Sym}{\operatorname{Sym}}
\DeclareMathOperator{\leftdes}{LD}
\DeclareMathOperator{\rightdes}{RD}
\DeclareMathOperator{\leftred}{LR}
\DeclareMathOperator{\rightred}{RR}
\DeclareMathOperator{\core}{core}
\DeclareMathOperator{\rank}{rank}
\title{Demazure operators for double cosets}
\author{Ben Elias, Hankyung Ko, Nicolas Libedinsky, Leonardo Patimo}
\begin{document}
\maketitle

\begin{abstract} 
For any Coxeter system, and any double coset for two standard parabolic subgroups, we introduce a Demazure operator. These operators form a basis for morphism spaces in a category we call the nilCoxeter category, and we also present this category by generators and relations.
% We establish some properties exhibited by this set of operators, such as their linear independence and their behavior under composition.
We prove a generalization to this context of Demazure's celebrated theorem on Frobenius extensions. This generalized theorem serves as a criterion for ensuring the proper behavior of singular Soergel bimodules.
\end{abstract}

%\tableofcontents

\section{Introduction}

This paper tells a completely algebraic story, though our motivations come from geometry and categorification. For sake of simplicity we focus on type $A$ in this introduction, but we work with arbitrary Coxeter groups in the body of the text, with connections to geometry (as stated) only for Weyl groups.

Let $R = \mathbb{C}[x_1, \ldots, x_n]$ be the polynomial ring in $n$ variables, equipped with its action of $W = S_n$. Let $R^W$ denote the subring of $W$-invariant polynomials, and for each $1 \leq i \leq n-1$, let $R^i$ denote the subring of polynomials invariant under the simple reflection $s_i := (i,i+1)$. There is a \emph{Demazure operator} or \emph{divided difference operator}
\begin{equation} \label{eq:paidefintro} \pa_i \co R \to R^i, \qquad \pa_i(f) = \frac{f - s_i(f)}{x_i - x_{i+1}}. \end{equation}
In similar fashion, one also has an operator
\begin{equation} \pa_W \co R \to R^W, \qquad \pa_W(f) = \frac{\sum_{w \in W} (-1)^{\ell(w)} w(f)}{\Pi_{i < j} (x_i - x_j)}. \end{equation}
These operators are now ubiquitous in combinatorics and representation theory.

Demazure operators appear when considering pushforwards in cohomology, along forgetful maps between partial flag varieties. There is also a version using equivariant cohomology of a point, for various parabolic subgroups. The geometric underpinning of Demazure operators is well-trodden; instead of rehashing it here, we refer the reader unfamiliar with this story to \cite{BGG,Brion}.

The key observation with which we begin our story is a foundational result of Demazure \cite{Demazure}: that $\pa_W$ lies within the subalgebra generated by $\pa_i$. To make this
statement more precise, let $\iota_i \co R^i \to R$ denote the inclusion map, and let $D_i := \iota_i \circ \pa_i$. Then $D_i$ satisfies the same formula as \eqref{eq:paidefintro},
but with different codomain, and the operators $D_i$ can be composed. Similarly, we set $D_W \co R \to R$ to be $\pa_W$ composed with the inclusion map $\iota_W \co R^W \to R$. The
subalgebra in $\End_{R^W}(R)$ generated by $D_i$ is called the \emph{nilCoxeter algebra}. Demazure proved a presentation for the nilCoxeter algebra: the relations are
\begin{equation} D_i^2 = 0, \qquad D_i D_{i+1} D_i = D_{i+1} D_i D_{i+1}, \qquad D_i D_j = D_j D_i \text{ if } j \ne i \pm 1. \end{equation} 
Because these relations are similar to
those of the symmetric group itself (e.g. this presentation falls into the framework of generalized Hecke algebra, see \cite[Chapter 7]{Humphreys}), the nilCoxeter algebra has a
basis $\{D_w\}_{w \in W}$ where $D_w$ is defined by applying various operators $D_i$ along a reduced expression for $w$. Moreover, Demazure proves that $D_W = D_{w_0}$, the basis
element associated to the longest element of $W$.

For any subset $I \subset S = \{1, \ldots, n-1\}$ of the simple reflections, there is a parabolic subgroup $W_I$, and a subring $R^I$ of $W_I$-invariant polynomials. By
the same token, one has an operator $\pa_I \co R \to R^I$ and $D_I \co R \to R$, and we have $D_I= D_{w_I}$ where $w_I$ is the longest element of $W_I$.

By rephrasing $\pa_i$ as an endomorphism $D_i$ of $R$, one has rudely cut the ring $R^i$ out of the picture. But $D_i = \iota_i \circ \pa_i$ is a composition of two distinct maps of $R^W$-modules. Geometrically, $\pa_i$ is a proper pushforward in cohomology, while $\iota_i$ is a pullback. When a linear transformation factors, it is often helpful to zoom in on the compositional factors independently. Indeed, Demazure's relation $D_i^2 = 0$ is a consequence of a more zoomed-in relation: $\pa_i \circ \iota_i = 0$.
The braid relations for $D_i$ are also consequences of more zoomed-in relations, called the \emph{switchback relations} in \cite{EKo}.

In this paper we are interested in studying the following category, a generalization of the nilCoxeter algebra. It describes all linear maps one could obtain by composing pullbacks and pushforwards between partial flag varieties.

\begin{defn} Let $\Dem$ be the following (non-full) subcategory of finite dimensional vector spaces over $\mathbb{C}$. Its objects are $R^I$ for $I \subset S$. Its morphisms are generated by inclusion maps $\iota^I_J \co R^J \to R^I$ and Demazure operators $\pa^I_J \co R^I \to R^J$, for $I \subset J$. \end{defn}
	
Above, $\pa^I_J$ is the restriction of $D_{w_J w_I^{-1}}$ to $R^I$; one can show its image lies in $R^J$.

Before motivating this category, let us state the main results of \Cref{sec:demazure}, which are analogous to those of Demazure but significantly more refined. We provide:
\begin{itemize}
\item A presentation of $\Dem$ by generators (namely $\iota^I_J$ and $\pa^I_J$) and relations,
\item An operator $\pa_p : R^J \to R^I$ for each double coset $p \in W_I \backslash W / W_J$,
\item A proof that $\{\pa_p\}$ is a basis for $\Hom_{\Dem}(R^J,R^I)$, as $p$ ranges among such double cosets.
\end{itemize}

\begin{rem} The map $\pa^I_J$ is $\pa_q$, where $q \in W_J \backslash W / W_I$ is the double coset containing the identity. The map $\iota^I_J$ is $\pa_p$, where $p \in W_I \backslash W / W_J$ is the double coset containing the identity (see Example \ref{ex:3.10}). \end{rem}

\begin{rem} A simple variant on Demazure's presentation, replacing $D_i^2 = 0$ with $D_i^2 = D_i$, will describe pushforwards and pullbacks in equivariant $K$-theory instead of equivariant homology (of a point). Everything stated above holds in the $K$-theoretic context as well, mutatis mutandis. However, most statements below are special to the homology setting. \end{rem}

\begin{rem} We use different and more streamlined notation in the body of the paper. The category $\Dem$ above matches a category $\Dem'$ in the text, while $\Dem$ is defined differently (though they are proven to be equivalent). \end{rem}

A composition of the generating morphisms can be parametrized by a word $[I_0, I_1, \ldots, I_d]$ in the subsets of $S$, where either $I_i \subset I_{i+1}$ or $I_i \supset I_{i+1}$ for each $0 \le i < d$. Such a word is called a \emph{(singular) expression} or a \emph{double coset expression}.

An ordinary expression $(s_{i_1}, s_{i_2}, \ldots, s_{i_d})$ is reduced if and only if the composition $D_{i_1} D_{i_2} \cdots D_{i_d}$ is nonzero in the nilCoxeter algebra, in
which case it is equal to $D_w$ for some $w$. One might ask a similar question in $\Dem$: which compositions of the generating maps $\pa^I_J$ and $\iota^I_J$ are nonzero? This
gives a valid notion of a \emph{reduced expression} for a double coset.

Reduced expressions for double cosets were introduced in Williamson's thesis \cite{WillThesis}, where one purpose was to produce resolutions of singularities in orbit
closures for partial flag varieties (analogous to Bott-Samelson resolutions, associated to ordinary reduced expressions, in the full flag variety). The theory of reduced expressions was greatly expanded in recent work \cite{EKo}, and several equivalent definitions are given which are easier to work with. In this paper we show that the definition of reduced expressions given loosely above agrees with the definitions in \cite{WillThesis,EKo}. 
In \Cref{sec:singularrex} we recall from \cite{EKo} the results we need.

\begin{rem} Given a double coset expression, one can also associate a singular Bott-Samelson bimodule, as in Williamson's thesis \cite{WillThesis}. Each reduced expression for a double coset $p$ gives a bimodule for which the indecomposable singular Soergel bimodule $B_p$ is a direct summand. In subsequent work, we will use the Demazure operators of this paper to explicitly construct the inclusion from $B_p$ to the Bott-Samelson bimodule of certain reduced expressions, when $p$ is a maximal double coset in a finite parabolic subgroup.
\end{rem}

One of the primary uses of Demazure operators is the following. There is an $R^W$-bilinear pairing $R \times R \to R^W$, where
\begin{equation} (f,g) := \pa_W(fg). \end{equation}
Demazure \cite{Demazure} proves that this is a perfect pairing by constructing dual bases. In modern parlance, this makes $R$ into a \emph{Frobenius extension} over $R^W$; one implication is that induction and restriction between $R$-modules and $R^W$-modules are 
biadjoint functors (up to grading shift). Similar statements can be made about the extension $R^I \subset R$, or even the extension $R^J \subset R^I$ when $I \subset J$. All of these Frobenius structures are shadows of Poincar\'{e} duality for proper maps between the corresponding partial flag varieties.

Suppose $i \ne j \in S$. Here is a statement which is not about the composition of operators $D_i D_j$, but really about the composition $\pa_i \circ\iota_j$: we can choose dual bases
for $R$ over $R^i$, such that one of the bases lives in $R^j$. More generally, the following property was called $(\star)$ in \cite{ESWFrob}: for any $I, J \subset S$, one can
choose dual bases for $R^{I \cap J}$ over $R^I$ such that one of the bases lives in $R^J$. The condition $(\star)$ was essential to the proper functioning of a diagrammatic calculus developed in \cite{ESWFrob}, for computing natural transformations between compositions of induction and restriction functors. These compositions of functors are isomorphic to taking tensor product with certain bimodules, called \emph{singular Bott-Samelson bimodules}.

In subsequent work \cite{KELP3} we will further develop this diagrammatic calculus, providing a basis for morphisms between singular Bott-Samelson bimodules, and a crucial input to
our result is the main result of \Cref{sec:frob}, \Cref{thm:dualbasisinimage}. This can be viewed as a generalization of $(\star)$ for each double coset in $W_I \backslash W / W_J$. \Cref{thm:dualbasisinimage},
when applied to the double coset $p \in W_I \backslash W /W_J$ containing the identity, is exactly the condition $(\star)$, but for other cosets it is a similar but more technical
statement. To state it precisely involves knowing additional attributes of double cosets (e.g. redundancy subgroups), so we do not state the result in the introduction.

{\bf Acknowledgments.} NL was partially supported by FONDECYT-ANID grant 1230247. BE was partially supported by NSF grant DMS-2201387. 

\section{Singular reduced expressions and the Matsumoto theorem} \label{sec:singularrex}

This chapter is a review of the main results of \cite{EKo} on singular expressions.
The only thing in this chapter which is not contained in \cite{EKo} is Notation \ref{dotforcosets}.

\subsection{Some notation} \label{ssec:notation}
 
Throughout we fix a Coxeter system $(W,S)$. If $I\subset S$, then $W_I$ is the parabolic subgroup generated by $I$. If $W_I$ is finite we call $I$ \emph{finitary}, and we denote its longest element by $w_I$. We let $\ell$ denote the length function on $W$ and we set $\ell(I):=\ell(w_I)$.

If no confusion is possible, we use shorthand for subsets $I \subset S$. We might write $stu$ rather than $\{s,t,u\} \subset S$, and $\hat{s}$ instead of $S \setminus \{s\}$. When $S = \{s_1, \ldots, s_n\}$, we may write $\{2,4,5\}$ or $245$ rather than $\{s_2, s_4, s_5\}$, and $\hat{2}$ instead of $S \setminus \{s_2\}$. If $s \notin I$ then we write $Is$ for $I \sqcup \{s\}$ and if $s \in I$ we write $I \setminus s$ for $I \setminus \{s\}$.

For $J, I\subset S$ finitary, a \emph{$(I,J)$-coset} is an element $p$ in 
 $W_{I}\backslash W/W_{J}$. When we write ``the coset $p$" we  mean the triple $(p,I,J)$. It might happen that  $(p,I,J)\neq (p',I',J')$, even though $p=p'$ as subsets of $W$, and we distinguish between $p$ and $p'$ in this case. If $p$ is a $(I,J)$-coset we denote by $\overline{p}\in W$ and $\underline{p}\in W$ the unique maximal and minimal elements in the Bruhat order in the set $p$.

For $x, y, z \in W$ we write $x.y = z$ to indicate the statement that $xy = z$ and $\ell(x) + \ell(y) = \ell(z)$. Thus we write $x.y = xy$ to imply that the lengths add when $x$ and $y$ are multiplied.  We may say that $x.y$ is a \emph{reduced composition}, in analogy to reduced expressions.

\subsection{Expressions and reduced expressions}
 
 \begin{defn}  A (singular) \emph{multistep  expression} is a sequence of finitary subsets of $S$ of the form 
 \begin{equation}\label{multistep}
 L_{\bullet}=[[ I_0\subset K_1\supset I_1\subset K_2 \supset \cdots \subset K_m\supset I_m]].
 \end{equation}
 By convention, if we write a multistep expression as $[[K_1 \supset I_1 \subset \cdots]]$, this means that $I_0 = K_1$,
and similarly $[[\cdots \subset K_m]]$ means that $I_m = K_m$. \end{defn}

Internal equalities in multistep expressions can be absorbed harmlessly. For example, the multistep expressions
\begin{equation} [[ \ldots K_i \supset I_i = K_{i+1} \supset I_{i+1} \subset \ldots ]] \quad \text{ and } \quad [[ \ldots K_i \supset I_{i+1} \subset \ldots ]] \end{equation}
are equivalent for all practical purposes, such as for the formulas \eqref{reduced} and \eqref{represents}. One may as well assume that multistep expressions have no internal equalities.

\begin{defn} A (singular) \emph{(singlestep) expression} is a sequence of finitary subsets of $S$ of the form
\begin{equation} I_{\bullet} = [I_0, I_1, \ldots, I_d], \end{equation} such that, for each $1 \le k \le d$, either $I_k = I_{k-1}s$ or $I_k = I_{k-1} \setminus s$ for some $s \in S$. The number $d$ is called the \emph{width} of $I_{\bullet}$. \end{defn}

To each singlestep expression one can associate one\footnote{Or many if one allows internal equalities in \eqref{multistep}.} multistep  expression by remembering its local maxima and minima. Singlestep expressions are more useful for purposes of generators and relations, while multistep expressions get to the heart of the matter faster.

\begin{defn} Let $L_{\bullet}$ be a multistep expression of the form \eqref{multistep}.% We define the \emph{length} of $L_\bullet$ as
%\begin{equation}\label{explength}\ell(L_\bullet):=\ell(w_{K_1})-\ell(w_{I_1})+\ell(w_{K_2})-\cdots -\ell(w_{I_{m-1}})+\ell(w_{K_m}).\end{equation}
Temporarily, let $w_{L_{\bullet}} \in W$ denote the element
\begin{equation} w_{L_{\bullet}} := w_{K_1}w_{I_1}^{-1}w_{K_2}\cdots w_{I_{m-1}}^{-1}w_{K_m}. \end{equation}
We say that $L_\bullet$ is \emph{reduced} if
 \begin{equation}\label{reduced}
\ell(w_{L_{\bullet}})=\ell(K_1)-\ell(I_1)+\ell(K_2)-\cdots -\ell(I_{m-1})+\ell(K_m)..
  \end{equation}
In this case we say that $L_{\bullet}$ \emph{expresses} the $(I_0, I_m)$-coset
\begin{equation}\label{represents}
p = W_{I_0}w_{L_{\bullet}}W_{I_m},    
\end{equation}
and we write $L_{\bullet} \expr p$. We write $L_{\bullet} \expr M_{\bullet}$ if  $L_{\bullet}$ and $M_{\bullet}$ express the same double coset $p$.
We say that a (singlestep) expression is \emph{reduced} if the corresponding multistep expression is. \end{defn}

Let us note  that if $L_{\bullet}$ is a reduced expression for $p$, then the maximal element $\ma{p}$ satisfies
\begin{equation} \label{map} \ma{p} = w_{L_{\bullet}} = w_{K_1} . (w_{I_1}^{-1}w_{K_2}) . (\cdots) . (w_{I_{m-1}}^{-1}w_{K_m}). \end{equation}
We stop using the notation $w_{L_{\bullet}}$, and instead start using $\ma{p}$.

By \cite[Proposition 2.31]{EKo} (building\footnote{Part of the work done in \cite{EKo} is to prove that the definition of a reduced expression given here, first stated in \cite{EKo}, is equivalent to the more technical definition given by Williamson.} on a result \cite[Proposition 1.3.4]{WillThesis} of Williamson), any double coset $p$ has a reduced expression.

\begin{rem} In formulas like the above we often write $w_K w_I^{-1}$ when $I \subset K$. Since $w_I$ is an involution, the inverses are not necessary. We include them when they are helpful for computing the length of an element, e.g. $(w_K w_I^{-1}) . w_I = w_K$. \end{rem}

\begin{rem} For more discussion of non-reduced expressions, see \S\ref{ssec:nonreduced}. \end{rem}

\subsection{Concatenation}

\begin{defn} If $I_{\bullet} = [I_0, \ldots, I_d]$ and $K_{\bullet} = [K_0, \ldots, K_e]$ then these expressions are \emph{composable} if $I_d = K_0$, in which case their composition or concatenation is
\[ I_{\bullet} \circ K_{\bullet} := [I_0, \ldots, I_d, K_1, \ldots, K_e]= [I_0, \ldots, I_{d-1}, K_0,  \ldots, K_e].\] \end{defn}

\begin{prop}[{\cite[Proposition 4.3]{EKo}}]\label{prop:concat} Let $I_{\bullet} \expr p$ and $K_{\bullet} \expr q$ be composable reduced expressions, with $J := K_0 = I_d$. Then $I_{\bullet} \circ K_{\bullet}$ is reduced if and only if
\[ \ma{p} w_J^{-1} \ma{q} = \ma{p} . (w_J^{-1} \ma{q}) = (\ma{p} w_J^{-1}) . \ma{q}. \] 
Moreover\footnote{This last statement is only stated implicitly in \cite{EKo}, and follows immediately from unraveling the equality $p * q = r$.}, $I_{\bullet} \circ K_{\bullet} \expr r$ where 
\begin{equation}\label{upper} \ma{r} = \ma{p} w_J^{-1} \ma{q}. \end{equation}
\end{prop}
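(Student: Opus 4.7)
The plan is to pass to multistep form, compute $w_{I_\bullet \circ K_\bullet}$ directly, and match the reducedness condition \eqref{reduced} against the claimed length identity.

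First, I write $I_\bullet$ in multistep form $[[A_0 \subset B_1 \supset \cdots \subset B_m \supset A_m]]$ with $A_m = J$ (padding by a trivial equality $A_m = B_m$ when $I_\bullet$ ends ascending), and analogously $K_\bullet = [[A'_0 \subset B'_1 \supset \cdots \supset A'_n]]$ with $A'_0 = J$. The multistep form of the concatenation is then
\[ [[A_0 \subset B_1 \supset \cdots \supset A_{m-1} \subset B_m \supset J \subset B'_1 \supset \cdots \supset A'_n]], \]
where $J$ appears as a new interior minimum glued from the endpoints. Evaluating the $w_{(\cdot)}$ formula and using \eqref{map} to recognize $\ma p$ and $\ma q$ as the corresponding products of parabolic longest elements gives
\[ w_{I_\bullet \circ K_\bullet} = \ma p \cdot w_J^{-1} \cdot \ma q. \]

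Second, reading off the reducedness condition \eqref{reduced} for the concatenation and subtracting those for $I_\bullet$ and $K_\bullet$ separately, the only new contribution is the new interior minimum of size $\ell(w_J)$, so $I_\bullet \circ K_\bullet$ is reduced if and only if
\[ \ell(\ma p \, w_J^{-1} \, \ma q) = \ell(\ma p) + \ell(\ma q) - \ell(w_J). \]

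The Coxeter-theoretic input is then: since $p$ is an $(I_0, J)$-coset, the maximum $\ma p$ has right descent set containing $J$ (otherwise some $\ma p s$ with $s \in J$ would still lie in $p$ but be longer), hence $\ma p = u . w_J$ and $\ell(\ma p w_J^{-1}) = \ell(\ma p) - \ell(w_J)$. Symmetrically $\ma q = w_J . v$ and $\ell(w_J^{-1} \ma q) = \ell(\ma q) - \ell(w_J)$. Substituting, the above length equation is simultaneously $\ell(\ma p w_J^{-1} \ma q) = \ell(\ma p w_J^{-1}) + \ell(\ma q)$ and $\ell(\ma p w_J^{-1} \ma q) = \ell(\ma p) + \ell(w_J^{-1} \ma q)$, which is exactly the stated condition $\ma p w_J^{-1} \ma q = \ma p . (w_J^{-1} \ma q) = (\ma p w_J^{-1}) . \ma q$. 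For the moreover part, assuming reducedness, equation \eqref{map} applied to $I_\bullet \circ K_\bullet$ yields $\ma r = w_{I_\bullet \circ K_\bullet} = \ma p w_J^{-1} \ma q$.

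The main obstacle is bookkeeping for the multistep form at the gluing: when $I_\bullet$ ends ascending or $K_\bullet$ starts descending, degeneracies like $B_m = A_m = J$ or $A'_0 = A'_{-1}$ force $w_J^{-1} w_J$ pairs to appear in the $w$-formula and must be seen to cancel, and one should check that the reducedness sum \eqref{reduced} is insensitive to such insertions. Once the convention of internal equalities is accepted, these are short computations, but handling all four parity cases uniformly is the only real subtlety in what is otherwise a direct calculation.
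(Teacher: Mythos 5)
The paper does not prove this proposition; it is cited verbatim from \cite{EKo}, with the ``moreover'' part relegated to a footnote saying it follows from unraveling $p * q = r$ in the Coxeter $*$-monoid of \cite{EKo}. So there is no in-paper proof to compare against. Judged on its own merits, your argument is correct and self-contained given the machinery summarized in the paper's Section~2. You rewrite both expressions in multistep form, compute $w_{I_\bullet \circ K_\bullet} = \ma{p}\, w_J^{-1}\, \ma{q}$ by telescoping the $w_{(\cdot)}$ product across the glue point $J$, and read off the reducedness condition \eqref{reduced} for the concatenation as $\ell(\ma{p}\, w_J^{-1}\, \ma{q}) = \ell(\ma{p}) + \ell(\ma{q}) - \ell(w_J)$. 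The translation of this single scalar equation into the two dot-conditions $\ma{p} . (w_J^{-1} \ma{q})$ and $(\ma{p} w_J^{-1}) . \ma{q}$ correctly uses the facts $J \subset \rightdes(\ma{p})$ and $J \subset \leftdes(\ma{q})$ (which the paper attributes to \cite[Lemma 2.12(5)]{EKo}, and for which your direct coset-maximality argument is a valid substitute); once those are in hand the two dot-conditions are each \emph{equivalent} to the length equation, not just implied by it, which is what you need for the ``if and only if.'' The ``moreover'' then drops out of \eqref{map}.

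Two small things worth noting. First, your reliance on \eqref{map} is appropriate since the paper itself asserts it without proof, but be aware it is doing real work here: it is what lets you identify $w_{I_\bullet}$, $w_{K_\bullet}$, and $w_{I_\bullet \circ K_\bullet}$ with the respective coset maxima, and it in turn rests on the characterization $w = \ma{p} \iff I \subset \leftdes(w),\ J \subset \rightdes(w)$. Second, your handling of the parity/degeneracy cases is the right concern and you resolve it correctly: the formulas \eqref{reduced} and the $w_{(\cdot)}$ product are insensitive to inserting a spurious internal equality $\cdots \subset J \supset J \subset \cdots$, because $w_J w_J^{-1} = 1$ and $\ell(J) - \ell(J) = 0$, so padding to make $J$ appear as an interior minimum is harmless. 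This is exactly the ``absorbed harmlessly'' remark the paper makes about multistep expressions, applied in reverse. Your approach, via the explicit length telescoping, differs in flavor from the monoid-theoretic route the paper's footnote gestures at, but both are legitimate; the computational route you chose has the advantage of being checkable entirely within the toolbox the paper itself recalls.
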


\begin{notation} \label{dotforcosets} This notation is not found in \cite{EKo}. For an $(I,J)$-coset $p$, a $(J,K)$-coset $q$, and a $(I,K)$-coset $r$, let us write $p . q = r$ if we can find $I_{\bullet}$ and $K_{\bullet}$ such that
\[ I_{\bullet} \expr p, \quad K_{\bullet} \expr q, \quad I_{\bullet} \circ K_{\bullet} \expr r\]
are all reduced expressions. We say that $r$ is a \emph{reduced composition} of $p$ and $q$. In this case we also write $I_{\bullet} \circ K_{\bullet} = I_{\bullet} . K_{\bullet}$. By the previous proposition, $p . q = r$ if and only if $\ma{r} = \ma{p}. (w_J^{-1} \ma{q}) = (\ma{p} w_J^{-1}) . \ma{q}$.

We also permit ourselves to mix and match cosets and their reduced expressions using this notation. If $I_{\bullet}$ is a reduced expression for $p$, then $I_{\bullet} . q$  represents a reduced composition of $I_{\bullet}$ with some reduced expression for $q$ (the choice being, presumably, not relevant). We may write $I_{\bullet} . q \expr p . K_{\bullet} \expr r$ as well.
\end{notation}

\begin{defn}
We say that $I_{\bullet}$ is a \emph{contiguous subword} of $L_{\bullet}$ if there exist expressions $J_{\bullet}$ and $K_{\bullet}$ such that $L_{\bullet} = J_{\bullet} \circ I_{\bullet} \circ K_{\bullet}$.
\end{defn}

\begin{prop}[{\cite[Proposition 3.12]{EKo}}] \label{prop:reverse}  A contiguous subword of a reduced expression is reduced. Reversing the order of a reduced expression yields a reduced expression.\end{prop}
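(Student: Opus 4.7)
The plan is to work from the characterization of reducedness for a multistep expression $L_\bullet = [[I_0 \subset K_1 \supset I_1 \subset \cdots \supset I_m]]$ as the statement that the factorization
\[
w_{L_\bullet} = w_{K_1} \cdot (w_{I_1}^{-1}w_{K_2}) \cdot (w_{I_2}^{-1}w_{K_3}) \cdots (w_{I_{m-1}}^{-1}w_{K_m})
\]
is length-additive. Each parenthesized factor has a known length: $\ell(K_1)$ for the first, and $\ell(K_{j+1}) - \ell(I_j)$ for the others, since $I_j \subset K_{j+1}$ yields the reduced factorization $w_{K_{j+1}} = w_{I_j}.(w_{I_j}^{-1}w_{K_{j+1}})$. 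These factor lengths sum to the right-hand side of \eqref{reduced}, and reducedness of $L_\bullet$ is the statement that $\ell(w_{L_\bullet})$ achieves this maximum.

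For the reversal statement, I would denote by $L_\bullet^{\mathrm{op}}$ the multistep expression $L_\bullet$ reversed, and observe that $w_{L_\bullet^{\mathrm{op}}} = w_{L_\bullet}^{-1}$ because each $w_K$ is an involution, while the right-hand side of \eqref{reduced} is manifestly symmetric under reversal. Since $\ell(x^{-1}) = \ell(x)$ for all $x \in W$, reducedness is preserved.

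For contiguous subwords, by the reversal just proved it suffices to show that every prefix $A_\bullet$ of a reduced $L_\bullet$ is reduced; then suffixes follow (they are reverses of prefixes of $L_\bullet^{\mathrm{op}}$), and general contiguous subwords are prefixes of suffixes. If $A_\bullet$ ends at a local extremum of $L_\bullet$ (a $K_i$ or an $I_i$), then $w_{A_\bullet}$ is literally an initial sub-product of the reduced factorization above, and its length equals the sum of those factor lengths, giving reducedness. The main obstacle is the case when $A_\bullet$ ends \emph{mid-segment}, at some $J$ with $I_i \subsetneq J \subsetneq K_{i+1}$ in the middle of an ascent. Here I would refine the factorization using the elementary chain-decomposition identity
\[
w_{I_i}^{-1}w_{K_{i+1}} = (w_{I_i}^{-1}w_J)\cdot(w_J^{-1}w_{K_{i+1}}),
\]
which is a reduced composition coming from the chain $I_i \subset J \subset K_{i+1}$ (the lengths $\ell(J)-\ell(I_i)$ and $\ell(K_{i+1})-\ell(J)$ add to $\ell(K_{i+1})-\ell(I_i)$). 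Inserting this into the reduced factorization of $w_{L_\bullet}$ yields a finer reduced product in which $A_\bullet$ again corresponds to an initial sub-product of the expected length, and reducedness of $A_\bullet$ follows. The symmetric mid-descent case is easier: if $I_i \subsetneq J \subsetneq K_i$, the multistep expression of the truncated prefix ends with $K_i \supset J$, and the associated element does not depend on $J$, so this case reduces to the extremum case at $K_i$.
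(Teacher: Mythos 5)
Your argument is correct. The paper itself does not reprove this statement; it simply cites \cite[Proposition~3.12]{EKo}, so there is no in-text proof to compare against. Your approach is a clean, self-contained argument directly from the length formula \eqref{reduced}: reducedness is equivalent to the length-additivity of the factorization $w_{L_\bullet} = w_{K_1}\cdot(w_{I_1}^{-1}w_{K_2})\cdots(w_{I_{m-1}}^{-1}w_{K_m})$, whose factor lengths sum to the right-hand side of \eqref{reduced} since $\ell(w_{I_j}^{-1}w_{K_{j+1}}) = \ell(K_{j+1}) - \ell(I_j)$ whenever $I_j \subset K_{j+1}$. Reversal is handled by $w_{L_\bullet^{\mathrm{op}}} = w_{L_\bullet}^{-1}$ (each $w_K$ being an involution) together with $\ell(x) = \ell(x^{-1})$ and the manifest symmetry of the right-hand side of \eqref{reduced}. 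For contiguous subwords you correctly reduce to prefixes via reversal, use that initial sub-products of a length-additive product are length-additive, and treat the mid-ascent case by refining $w_{I_i}^{-1}w_{K_{i+1}} = (w_{I_i}^{-1}w_J)\cdot(w_J^{-1}w_{K_{i+1}})$ along the chain $I_i \subset J \subset K_{i+1}$, and the mid-descent case by observing that neither the element nor the target length depends on the partial-descent endpoint. The only step you invoke without comment is that an initial sub-product of a length-additive factorization is itself length-additive; this is standard (it follows from $\ell(ab) \ge \ell(a) - \ell(b)$) but worth a sentence if this were to stand as a written proof.
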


 \subsection{Redundancy and the core} \label{ssec:core}

\begin{defn} Let $p$ be a (finitary) $(I,J)$-coset. The \emph{left redundancy of $p$}, denoted $\leftred(p)$, is the subset of $I$ defined as follows:
\begin{equation} \leftred(p) := I \cap \mi{p} J \mi{p}^{-1}. \end{equation}
The \emph{right redundancy of $p$} is a subset of $J$ defined similarly:
\begin{equation} \rightred(p) := J \cap \mi{p}^{-1} I \mi{p}. \end{equation}
\end{defn}

There is a surjective map $W_I \times W_J \to p$ sending $(x,y) \mapsto x \mi{p} y$. The fibers of this map are torsors over the (left or right) redundancy. We have
\begin{equation} \label{mapmip} \ma{p} = (w_I w_{\leftred(p)}^{-1}) . \mi{p} . w_J = w_I . \mi{p} . (w_{\rightred(p)}^{-1} w_J).  \end{equation}
For details, see \cite[Lemma 2.12]{EKo}. The ideas above are originally due to Kilmoyer and Howlett.

\begin{defn}\label{core} Let $p$ be a $(I,J)$-coset. The \emph{core} of $p$ is the $(\leftred(p),\rightred(p))$-coset $p^{\core}$ with minimal element $\mi{p^{\core}}=\mi{p}$. \end{defn}

In \cite[Lemma 4.27]{EKo}, it is proven that the core is well-defined, and that $\leftred(p^{\core}) = \leftred(p)$ and $\rightred(p^{\core}) = \rightred(p)$. The main theorem of Section 4.9 in \cite{EKo} is that $p$ has a reduced expression which factors through its core.

\begin{thm}[{\cite[Cor. 4.27 and Prop. 4.28]{EKo}}]\label{thmA}\label{thm:core} Let $p$ be a $(I,J)$-coset. Then $p$ has a reduced expression of the form $[I,I\setminus s,\ldots,J]$ if and only if $s\not \in \leftred(p)$. Moreover, for any reduced expression $M_{\bullet}$ of $p^{\core}$, the (multistep) expression
\begin{equation} \label{throughcore} [[I \supset \leftred(p)]] \circ M_{\bullet} \circ [[\rightred(p) \subset J]] \end{equation}
is a reduced expression for $p$. \end{thm}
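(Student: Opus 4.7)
The plan is to first establish the ``moreover'' statement, which provides a canonical reduced expression and will immediately yield the $\Leftarrow$ direction of the biconditional; the $\Rightarrow$ direction needs a separate length-based argument. Write $L = \leftred(p)$ and $R = \rightred(p)$, and consider the multistep $E_\bullet := [[I \supset L]] \circ M_\bullet \circ [[R \subset J]]$. The trivial $(I,L)$-coset expressed by $[[I \supset L]]$ has maximal element $w_I$, and the trivial $(R,J)$-coset expressed by $[[R \subset J]]$ has maximal element $w_J$. Applying \eqref{mapmip} to $p^{\core}$ (which satisfies $\mi{p^{\core}} = \mi{p}$, $\leftred(p^{\core}) = L$, $\rightred(p^{\core}) = R$), we get $\ma{p^{\core}} = w_L \mi{p} = \mi{p} w_R$, with the two forms related by the Kilmoyer--Howlett identity $\mi{p} w_R = w_L \mi{p}$. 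Applying Proposition \ref{prop:concat} twice, the length conditions reduce, via these explicit maximal elements, to the facts that $\mi{p}$ has no left descent in $I$ and no right descent in $J$; the resulting maximal element of $E_\bullet$ is $w_I \mi{p} w_R^{-1} w_J = \ma{p}$ by \eqref{mapmip}, so $E_\bullet$ is a reduced expression for $p$.

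For the $\Leftarrow$ direction, suppose $s \notin L$. Then $s \in I \setminus L$, so the multistep descent $[[I \supset L]]$ admits a singlestep refinement that begins with the step $I \to I \setminus s$ (remove $s$ first, then the other elements of $I \setminus L$ in any order). The associated multistep of this singlestep refinement is unchanged, hence reducedness is preserved; concatenating with singlestep refinements of $M_\bullet$ and $[[R \subset J]]$ produces a reduced expression $[I, I \setminus s, \ldots, J]$ for $p$.

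For the $\Rightarrow$ direction, suppose $p$ has a reduced expression $[I, I \setminus s, \ldots, J]$, whose multistep form is $[[I \supset I_1 \subset \cdots]]$ with $s \notin I_1$. Split this as $[[I \supset I_1]] \circ K_\bullet$, where $K_\bullet$ is a reduced expression for some $(I_1, J)$-coset $q$. Proposition \ref{prop:concat} gives the product formula $\ma{p} = w_I w_{I_1}^{-1} \ma{q}$ together with the length relation $\ell(\ma{p}) = \ell(w_I) - \ell(w_{I_1}) + \ell(\ma{q})$. Writing $L_q = \leftred(q) \subseteq I_1$ and using \eqref{mapmip} for $q$, I extract the identity $\mi{q} = w_{L_q} w_L \mi{p}$ and the length equation $\ell(\mi{q}) - \ell(\mi{p}) = \ell(w_{L_q}) - \ell(w_L)$, whose right side is non-negative since $\mi{q} \geq \mi{p}$ in Bruhat order. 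Because $w_{L_q} w_L \in W_I$ and $\mi{p}$ has no left descent in $I$, the product $(w_{L_q} w_L) \mi{p}$ is length-additive, giving $\ell(w_{L_q} w_L) = \ell(w_{L_q}) - \ell(w_L)$. This forces $w_L$ to be a length-additive right factor of $w_{L_q}$, and since every reduced expression of $w_{L_q} \in W_{L_q}$ uses only letters in $L_q$, we conclude $L \subseteq L_q$. Combined with $L_q \subseteq I_1 \subseteq I \setminus s$, this gives $s \notin L$, as desired. The main difficulty is extracting the structural identity $\mi{q} = w_{L_q} w_L \mi{p}$ from the concatenation criterion and translating the length equation $\ell(w_{L_q} w_L) = \ell(w_{L_q}) - \ell(w_L)$ into the set-theoretic inclusion $L \subseteq L_q$ via the subword property of parabolic subgroups.
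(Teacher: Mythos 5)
The paper does not prove this theorem; it simply cites \cite[Cor.\ 4.27 and Prop.\ 4.28]{EKo}. Your proposal is a genuine, self-contained proof built entirely from the other ingredients the paper restates: the Kilmoyer--Howlett factorizations in \eqref{mapmip}, the concatenation criterion of Proposition~\ref{prop:concat}, and the fact (Proposition~\ref{prop:reverse}) that contiguous subwords of reduced expressions are reduced. I checked the computations and they are correct.

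A few remarks on details. In the ``moreover'' step, your phrase that the length conditions ``reduce to $\mi{p}$ having no left/right descent in $I$ or $J$'' compresses a genuine chain of identities; the verification also uses the conjugation fact $\mi{p} w_R \mi{p}^{-1} = w_L$ and $\ell(w_L) = \ell(w_R)$, both part of the Kilmoyer--Howlett package you invoke via \eqref{mapmip}. Spelling out both applications of Proposition~\ref{prop:concat} (checking $\ell(w_I\mi{p}) = \ell(w_I)+\ell(\mi{p})$, $\ell(w_I w_L \mi{p}) = \ell(w_I)-\ell(w_L)+\ell(\mi{p})$, and matching these against $\ell(\ma{p})$) would make the argument airtight. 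In the $\Rightarrow$ direction, the remark that $\ell(w_{L_q})-\ell(w_L)\ge 0$ because $\mi{q}\ge\mi{p}$ is superfluous (and the Bruhat-order claim is not actually used); the real engine is the observation that $(w_{L_q}w_L).\mi{p}$ is length-additive because $w_{L_q}w_L\in W_I$ and $\mi{p}$ is the minimal representative of $W_I\mi{p}$, yielding $\ell(w_{L_q}w_L)=\ell(w_{L_q})-\ell(w_L)$ and hence $L\subseteq L_q$ via the subword/content property of reduced expressions in $W_{L_q}$. You also implicitly rely on Proposition~\ref{prop:reverse} when asserting both halves of the split are reduced; worth citing explicitly. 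With these small expansions, the proposal is a complete proof of a result the paper leaves to the literature.
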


Using Notation \ref{dotforcosets} we may write
\begin{equation} p \expr [[I \supset \leftred(p)]] . p^{\core} . [[\rightred(p) \subset J]]. \end{equation}

\subsection{Addable and removable elements}

\begin{defn} For an element $w \in W$, let $\leftdes(w) = \{ s \in S \mid sw < w \}$, commonly called the \emph{left descent set} of $w$. Similarly, $\rightdes(w) = \{ s \in S \mid ws < w \}$ is the \emph{right descent set}.

For a double coset $p$, let $\leftdes(p)$ and $\rightdes(p)$, the \emph{left and right descent sets} of $p$, denote the (ordinary) left and right descent sets of $\ma{p}$. \end{defn}

Let $p$ be an $(I,J)$-coset. By \cite[Lemma 2.12(5)]{EKo}, $I \subset \leftdes(p)$ and $J \subset \rightdes(p)$. Moreover, for any $w \in p$, $w = \ma{p}$ if and only if $I \subset \leftdes(w)$ and $J \subset \rightdes(w)$.

\begin{thm}[{\cite[Cor. 4.20 and Prop. 4.21]{EKo}}] \label{thm:addremove} Let $p$ be an $(I,J)$-coset. Then $p$ has a reduced expression of the form $[I,Is, \ldots, J]$ if and only if $s \in \leftdes(p) \setminus I$. It has a reduced expression of the form $[I,\ldots,Js,J]$ if and only if $s \in \rightdes(p) \setminus J$. Moreover, $p$ has some reduced expression of the form 
\[[[I \subset \leftdes(p)]] . N_\bullet.[[\rightdes(p)\supset J]],\] where $N_\bullet$ is any reduced expression of the $(\leftdes(p),\rightdes(p))$-coset containing $p$.
\end{thm}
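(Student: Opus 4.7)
My plan is to reduce all three parts to \Cref{prop:concat}, using the fact that the $(I, Is)$-coset containing the identity has max element $w_{Is}$ (and symmetrically on the right). Part (2) follows from part (1) by reversing expressions via \Cref{prop:reverse}, so I focus on (1) and (3). For the forward direction of (1), suppose $[I, Is, \ldots, J]$ is a reduced expression for $p$. Its associated multistep expression has the form $[[I \subset K_1 \supset \cdots]]$ with $Is \subseteq K_1$. By \eqref{map}, $w_{K_1}$ is a reduced left factor of $\ma{p}$, and since $Is \subset K_1$, $w_{Is}$ is a reduced left factor of $w_{K_1}$, hence of $\ma{p}$. This forces $Is \subset \leftdes(\ma{p}) = \leftdes(p)$, so $s \in \leftdes(p) \setminus I$.

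For the backward direction, suppose $s \in \leftdes(p) \setminus I$. Let $p_1$ be the $(I, Is)$-coset containing the identity, so $\ma{p_1} = w_{Is}$ and $[I, Is]$ is a reduced expression for $p_1$. Let $q$ be the unique $(Is, J)$-coset containing $\ma{p}$; since $Is \subset \leftdes(\ma{p})$ and $J \subset \rightdes(\ma{p})$, the characterization of maximal elements recalled just before the theorem gives $\ma{q} = \ma{p}$. Pick any reduced expression $K_\bullet$ for $q$, which exists by \cite[Prop.~2.31]{EKo}. The hypothesis of \Cref{prop:concat} for $[I, Is] \circ K_\bullet$ reads $\ma{p} = \ma{p_1} . (w_{Is}^{-1} \ma{q}) = w_{Is} . (w_{Is}^{-1} \ma{p})$, which is a reduced factorization precisely because $Is \subset \leftdes(\ma{p})$. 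So $p_1 . q = p$, and $[I, Is] \circ K_\bullet$ is a reduced expression for $p$ of the required shape.

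Part (3) is then obtained by iterating the backward direction of (1): starting from $p$, whenever the current left-side subset $L_i \subsetneq \leftdes(p)$, pick $s \in \leftdes(p) \setminus L_i$ and peel off $[L_i, L_i s]$ on the left, producing an $(L_i s, J)$-coset whose max element is still $\ma{p}$. After finitely many steps we obtain a reduced decomposition $p = [[I \subset \leftdes(p)]] . p_\ell$, where $p_\ell$ is the $(\leftdes(p), J)$-coset with $\ma{p_\ell} = \ma{p}$. Applying the symmetric argument on the right to $p_\ell$ yields $p_\ell = q . [[\rightdes(p) \supset J]]$, where $q$ is the $(\leftdes(p), \rightdes(p))$-coset containing $\ma{p}$; any reduced expression $N_\bullet$ for $q$ completes the construction. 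The main obstacle I expect is the hypothesis check in the backward direction of (1), which rests on the standard Coxeter fact that a finitary $L \subset \leftdes(w)$ forces a reduced factorization $w = w_L . v$, together with confirming that $\ma{p}$ is the max element (not just any element) of the $(Is, J)$-coset it lies in. The iteration in (3) additionally needs the implicit associativity of reduced composition, which follows from \Cref{prop:concat,prop:reverse}.
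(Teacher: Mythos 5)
Your proof is correct, but it is worth noting that the paper does not actually prove this theorem: it is a recalled result, and the paper's ``proof'' consists of a citation to \cite[Cor.\ 4.20, 4.27 and Prop.\ 4.21, 4.28]{EKo}, plus the single observation that the $[I,\ldots,Js,J]$ statement follows from the $[I,Is,\ldots,J]$ statement by reversal via \Cref{prop:reverse} --- which is exactly your treatment of part (2). What you have done beyond that is reconstruct a self-contained derivation of parts (1) and (3) from the other ingredients the paper recalls: the concatenation criterion of \Cref{prop:concat}, the characterization ``$w=\ma{p}$ iff $I\subset\leftdes(w)$ and $J\subset\rightdes(w)$'' stated just before the theorem, and formula \eqref{map}. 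This route is sound. The forward direction of (1) correctly extracts $w_{K_1}$, hence $w_{Is}$, as a reduced left factor of $\ma{p}$; the backward direction correctly identifies $\ma{q}=\ma{p}$ for the ambient $(Is,J)$-coset and verifies the hypothesis of \Cref{prop:concat} with $\ma{p_1}=w_{Is}$; and the iteration in (3), together with the subword statement of \Cref{prop:reverse}, handles the associativity concern you raise. Two small points you should make explicit: each intermediate set $L_is$ is finitary because it sits inside $\leftdes(\ma{p})$, which is finitary (a prerequisite for $w_{L_is}$ and for \eqref{reduced} to make sense); and the ``standard Coxeter fact'' you invoke --- that a finitary $M\subset\leftdes(w)$ forces $w=w_M.v$ --- is used in both directions of (1) and deserves a citation or a one-line proof. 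With those noted, your argument is a legitimate alternative to the black-box citation, at the cost of relying on the existence of reduced expressions and on \Cref{prop:concat}, both of which come from the same source \cite{EKo} anyway.
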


\begin{proof} See \cite[Corollaries 4.20 and 4.27, Propositions 4.21 and 4.28]{EKo}. The statement about $[I,\ldots,Js,J]$ is not found in \cite{EKo}, but can be easily deduced from the statement about $[I,Is,\ldots,J]$ by applying Proposition \ref{prop:reverse} to reverse the order of the word. \end{proof}

In other words, every double coset $p$ admits a reduced expression which begins and ends going up as much as possible, i.e. to the descent sets of $p$. On the other hand, by \Cref{thm:core} every double coset $p$ admits a reduced expression which begins and ends going down as much as possible, i.e. to the redundancy sets of $p$.

\subsection{Braid relations and Matsumoto's theorem}

Another convenient way to keep track of an expression $I_{\bullet} = [I_0, \ldots, I_d]$ is to write down not the parabolic subsets $I_k$, but the sequence of simple reflections which were added and removed. If $I_k = I_{k-1} s$ then we write $+s$, and if $I_k = I_{k-1} \setminus s$ then we write $-s$. For example, we have
\begin{equation} \label{plusminusnotation} [I + s + t - u + v] := [I, Is, Ist, Ist\setminus u, Istv \setminus u]. \end{equation}
By convention, this notation implies that $s, t \notin I$ and $u \in Ist$, and that all the subsets of $S$ appearing in the expression are finitary.

Now we list the \emph{singular braid relations}, which are local transformations one can apply to contiguous subwords of reduced expressions.
\begin{subequations} \label{braidrelns}
\begin{itemize}
    \item The \emph{up-up relation} is 
\begin{equation} \label{upup} [L+s+t] \expr [L+t+s] \end{equation}
for any $s, t \notin L$ with $Lst$ finitary.
\item The \emph{down-down relation} is
\begin{equation} \label{downdown} [L-s-t] \expr [L-t-s]\end{equation}
for any $s, t \in L$ with $L$ finitary.
\item The \emph{commuting (switchback) relation} is
\begin{equation} [L + s - t] \expr [L - t + s] \end{equation}
for any $s \notin L$ and $t \in L$ satisfying
\begin{itemize} \item $Ls$ is finitary, and
\item $s$ and $t$ are in different connected components of $Ls$. \end{itemize}
\item The \emph{switchback relation} is
\begin{equation} \label{switchback} [L + s - t] \expr [L - u_1 + u_0 - u_2 + u_1 - \cdots + u_{\delta-2} - u_{\delta} + u_{\delta-1}], \end{equation}
 for $s \notin L$, $Ls$ finitary, and $t \in L$ satisfying that $t \ne w_{Ls}  s  w_{Ls}$. Here $(u_i)$ is the ``rotation sequence'' (see below) associated to the triple $(Ls,s,t)$, with $u_0 = s$ and $u_{\delta} = t$.
\end{itemize}
\end{subequations}

The switchback relation is complicated enough to merit additional discussion, but let us first state the main theorem about braid relations.

\begin{thm}[{Singular Matsumoto Theorem \cite[Thm. 5.30]{EKo}}] \label{thm:matsumoto}  Any two reduced expressions for the same double coset are related by a sequence of singular braid relations. \end{thm}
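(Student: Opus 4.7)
The plan is to induct on $\ell(\ma{p})$, since width cannot serve as the induction parameter: two reduced expressions for the same double coset may have different widths. For example, in $W = S_3$ (with simple reflections $s,t$) the $(\{s\},\{s\})$-coset containing $sts$ admits both $[\{s\},\{s,t\},\{s\}]$ (width $2$) and $[\{s\},\emptyset,\{t\},\emptyset,\{s\}]$ (width $4$) as reduced expressions, as one checks from the length formula \eqref{reduced}. The base case $\ell(\ma{p}) = 0$ forces $p$ to be an identity coset, and the only reduced expressions are multistep expressions with all interior $K_i,I_i$ equal; braid-equivalence is then immediate.

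For the inductive step, I would treat the ``through the core'' decomposition of \Cref{thm:core} as a normal form and attempt to show that every reduced expression $L_\bullet$ for $p$ is braid-equivalent to one of the form
\[ [[I \supset \leftred(p)]] \circ M_\bullet \circ [[\rightred(p) \subset J]], \]
where $M_\bullet$ is a reduced expression for the core $p^{\core}$. Once this normal-form reduction is established, the outer prefix and suffix are canonically determined by $p$, and braid-equivalence of two reduced expressions for $p$ reduces to braid-equivalence of the corresponding core expressions $M_\bullet$ and $M'_\bullet$. For the core one has $\leftred(p^{\core}) = I$ and $\rightred(p^{\core}) = J$, which prevents further factoring through smaller parabolics; in this situation a reduced expression tracks (via \eqref{map}) a classical reduced expression for $\ma{p}$ in $W$. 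One can then invoke the classical Matsumoto theorem on the underlying Coxeter word and translate each ordinary braid move into a short sequence of singular up-up, down-down, and commuting moves.

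The main obstacle is the normal-form reduction itself: showing that braid moves alone suffice to bring any reduced expression into ``through the core'' shape. The argument would proceed by a secondary induction on some measure of distance to normal form, verifying at each stage that when the prescribed first step is not the one taken by $L_\bullet$, exactly one of \eqref{upup}--\eqref{switchback} applies to a contiguous subword and strictly decreases the distance (with reducedness preserved by \Cref{prop:reverse} and \Cref{prop:concat}). The delicate case is the switchback relation \eqref{switchback}, which must typically be applied \emph{backwards}: since the right-hand side of \eqref{switchback} is a long alternating ``rotation'' subword while the left-hand side has width $2$, bringing an expression like $[\{s\},\emptyset,\{t\},\emptyset,\{s\}]$ back to $[\{s\},\{s,t\},\{s\}]$ requires recognizing a rotation subword already embedded in $L_\bullet$ and collapsing it. Establishing that such a collapsible subword always exists when needed, and that each collapse yields a still-reduced expression strictly closer to normal form, is where I expect the bulk of the technical work to lie.
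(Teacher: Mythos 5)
This theorem is not proved in the paper at all: it is imported verbatim from \cite[Thm.\ 5.30]{EKo}, where its proof is a long and intricate induction occupying a substantial part of that paper. So there is no in-paper argument to compare against; I can only assess your sketch on its own terms, and it has two genuine gaps.

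First, your treatment of the terminal (core) case is incorrect. For a coset equal to its own core, reduced singular expressions do \emph{not} simply ``track'' classical reduced words, and classical Matsumoto moves cannot be implemented by up--up, down--down, and commuting relations alone. Take $W=S_3$ and the $(\emptyset,\emptyset)$-coset $p=\{w_0\}$, which is its own core. Its reduced expressions include $[\emptyset,s,\emptyset,t,\emptyset,s,\emptyset]$, $[\emptyset,t,\emptyset,s,\emptyset,t,\emptyset]$, and also $[\emptyset,s,st,t,\emptyset]$, which corresponds to no single classical word. No up--up, down--down, or commuting relation applies to any contiguous subword of $[\emptyset,s,\emptyset,t,\emptyset,s,\emptyset]$ (there are no consecutive same-direction steps, and $s,t$ lie in the same component); the only available move is a \emph{reverse} switchback collapsing $[\{s\}-s+t-t+s]$ to $[\{s\}+t-t]$, after which down--down, up--up, and a forward switchback reach $[\emptyset,t,\emptyset,s,\emptyset,t,\emptyset]$. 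So the switchback relation is indispensable precisely in the core case you propose to dispatch without it, and the dictionary ``singular reduced expression $\leftrightarrow$ classical reduced word'' that your plan relies on does not exist.

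Second, the step you correctly identify as the crux --- that braid moves always suffice to bring a reduced expression into through-the-core normal form --- is not argued at all: no distance function is proposed, no case analysis is given, and no reason is offered why a collapsible switchback subword must exist when needed. Since this step \emph{is} the theorem (the rest is bookkeeping plus the induction on $\ell(\ma{p})$, which does correctly terminate because $\ell(\ma{p^{\core}})<\ell(\ma{p})$ whenever $p\neq p^{\core}$), the proposal as written is a plausible strategy outline, close in spirit to the normal forms of \Cref{thm:core} that \cite{EKo} develops, but it is not a proof.
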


Let us describe the switchback relation succinctly in type $A$. If $s$ and $t$ are not in the same connected component of $Ls$ then the switchback relation becomes merely the commuting relation. First consider the case when $Ls = S$ and $W = S_n$.

\begin{ex} Let $W = S_n$ and $S = \{s_1, \ldots, s_{n-1}\}$ be the usual simple reflections. Let $s = s_a$ and $t = s_b$, and $L = S \setminus s_a$. If $a + b = n$ then $t = w_S s  w_S$ and there is no switchback relation. Otherwise, if $a+b < n$ we set $c = a+b$, and if $a + b > n$ we set $c = a+b - n$. Then the switchback relation is
\begin{equation} \label{typeAswitchback} [L + s_a - s_b] \expr [L - s_c + s_a - s_b + s_c]. \end{equation}
Both sides are reduced expressions for the $(\hat{s_a}, \hat{s_b})$-coset containing $w_S$.
\end{ex}

The general case in type $A$ is below.

\begin{ex} Let $W = S_m$ and $Ls$ be an arbitrary parabolic subset. Let $I$ denote the connected component of $Ls$ containing $s$, and suppose $t \in I$. Then $W_I \cong S_n$ for some $n \le m$, and the switchback relation for $s$ and $t$ is precisely as in the previous example. \end{ex}

In arbitrary types, the rotation sequence is a particular walk around the Dynkin diagram, using conjugation by longest elements of parabolic subgroups, see \cite[Def. 5.10]{EKo}. The number $\delta$ is $1$ for the case of the commuting switchback relation. Otherwise, $\delta$ is equal to $2$ in type $A$, at most $3$ in types $BCD$, and caps out at $\delta = 11$ in type $H_4$. For explicit details on the switchback relation see \cite[Chapter 5]{EKo}.

\subsection{Non-reduced expressions} \label{ssec:nonreduced}

In \cite[Section 1.3]{EKo} the authors define the singular Coxeter monoid, a category where the morphisms are double cosets, and where a singlestep expression represents a composition of generating morphisms. In this way, any expression expresses some double coset, see \cite[Def. 1.27 and Eq. 1.15]{EKo}. When the expression is not reduced, the formula for this double coset might not agree with \eqref{represents}. Reduced expressions are exactly the expressions of minimal length expressing a coset, for a notion of length defined in \cite[Section 3.4]{EKo}.

 % \begin{defn} Given a multistep expression $L_{\bullet}$ of the form \eqref{multistep}, the \emph{length} of $L_{\bullet}$ is
% \begin{equation} \label{length} \ell(L_{\bullet}) = \ell(w_{K_1})-\ell(w_{I_1})+\ell(w_{K_2})-\cdots -\ell(w_{I_{m-1}})+\ell(w_{K_m}). \end{equation}
% \end{defn}

% \begin{prop} An expression for a double coset $p$ is a reduced expression if and only if it has minimal length amongst all expressions for $p$. \end{prop}

The \emph{(singular) $*$-quadratic relation} is
\begin{equation} \label{starquad} [L - s + s] \expr [L] \end{equation}
for $s \in L$. The left-hand side is a non-reduced expression, expressing the same double coset as the right-hand side. The main result on non-reduced expressions is below.

\begin{thm}[{\cite[Thm. 5.31]{EKo}}] \label{thm:notreduced}  Any expression $I_{\bullet}$ can be transformed into a reduced expression by applying braid relations and applying the $*$-quadratic relation in only one direction, replacing the left-hand side of \eqref{starquad} with the right-hand side. \end{thm}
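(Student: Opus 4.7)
The plan is to argue by strong induction on the width $d$ of $I_{\bullet}$. If $I_{\bullet}$ is already reduced (in particular when $d \leq 1$), there is nothing to do. Otherwise we will exhibit a sequence of braid relations followed by exactly one application of the $*$-quadratic relation $[L-s+s] \expr [L]$ which produces an expression of width $d-2$ still representing the same double coset; the inductive hypothesis then finishes the job.

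The first step is to locate a contiguous subword $J_{\bullet} = [J_0, \ldots, J_e]$ of $I_{\bullet}$ of minimal width which is not reduced. A preliminary observation pins down the shape of $J_{\bullet}$: appending a down step to a reduced expression preserves reducedness, because in multistep form the terminal $I_m$ appears neither in the product $w_{L_{\bullet}}$ nor in the length formula \eqref{reduced}. Combined with \Cref{prop:reverse}, prepending an up step also preserves reducedness. By minimality, $J_{\bullet}$ must therefore begin with a down step ($J_0 \supset J_1$) and end with an up step ($J_{e-1} \subset J_e$). For the base case $e = 2$, a direct check of the four possible width-$2$ shapes using \eqref{reduced} shows that the only non-reduced one is $[L, L-s, L] = [L-s+s]$, so the $*$-quadratic relation applies on the nose, with no preliminary braid moves.

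For $e \geq 3$, both proper subwords $[J_0, \ldots, J_{e-1}]$ and $[J_1, \ldots, J_e]$ are reduced by minimality, expressing cosets $p$ and $q$ respectively. Applying \Cref{prop:concat} to their overlapping concatenation, the failure of reducedness of $J_{\bullet}$ is equivalent to a specific failure of length-additivity of certain products of longest parabolic elements. The plan is to combine this failure with the core and descent structure theorems \Cref{thm:core} and \Cref{thm:addremove} to isolate a simple reflection $s$ whose addition at some interior up-step of $J_{\bullet}$ is already ``absorbed'' in the coset expressed by the surrounding reduced part. The commuting and switchback relations \eqref{braidrelns} can then be invoked to transport this up-step to a position immediately following a matching down-step removing $s$, producing the contiguous $[L, L-s, L]$ subword required for the $*$-quadratic relation.

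The main obstacle is this transport step. Because the switchback relation \eqref{switchback} can involve rotation sequences of length up to $\delta = 11$ (in type $H_4$), the braid chain realizing the transport can be intricate. Conceptually, its existence follows from applying the singular Matsumoto theorem \Cref{thm:matsumoto} to reduced subparts of $J_{\bullet}$: both the original arrangement and the target (with the redundant $\pm s$ pair in normalized position) are reduced expressions of the same coset, hence braid-equivalent. The delicate part is carrying out the rearrangement while verifying that no \emph{additional} $*$-quadratic applications are needed along the way, which requires careful bookkeeping of how descent sets, redundancies, and cores evolve under each braid move.
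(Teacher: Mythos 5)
The paper does not prove this theorem: it is quoted verbatim from \cite[Thm.~5.31]{EKo}, so there is no internal proof to compare against. Evaluating your argument on its own merits, the structural setup is sensible (pass to a minimal non-reduced contiguous subword, observe by \Cref{prop:reverse} and the fact that appending a down step or prepending an up step never destroys reducedness that such a subword must begin with a down step and end with an up step, and check that the only non-reduced width-two such word is $[L-s+s]$). That much is correct. But two of your moves do not close.

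First, the induction on width $d$ is not well-founded under the operations you allow yourself. The switchback relation \eqref{switchback} replaces a width-$2$ piece by a width-$2\delta$ piece, and $\delta$ can be as large as $11$; so a single braid move can increase the width by $2\delta-2$, and one application of the $*$-quadratic only removes $2$. Outside type $A$ there is no guarantee that ``braid moves then one $*$-quadratic'' lands you at width $d-2$, so the stated inductive step fails. The correct well-founded quantity is the intrinsic \emph{length} of an expression from \cite[\S3.4]{EKo}, which is preserved by braid relations and strictly drops under the one-directional $*$-quadratic, not the width. Second, and more seriously, the entire case $e\geq 3$ is left as a plan rather than a proof. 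You need to exhibit a concrete chain of braid moves that produces a $[L-s+s]$ subword, but you neither identify which $s$ to target nor show that a matching down-step exists where you need it, and the appeal to the Matsumoto theorem \Cref{thm:matsumoto} is circular: Matsumoto only relates two \emph{reduced} expressions of the same coset, while $J_{\bullet}$ is by construction not reduced, so it cannot be one of the two endpoints. Applying Matsumoto only to reduced proper subwords of $J_{\bullet}$ does not by itself produce the required $[L-s+s]$ configuration; demonstrating that it can is essentially the content of \cite[Thm.~5.31]{EKo}, whose proof requires a substantial analysis that your outline does not replace. You correctly flag this as the main obstacle; as written it is a genuine gap, not a deferred technicality.
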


\section{Demazure operators and double cosets} \label{sec:demazure}

The goal of this chapter is to define Demazure operators associated to double cosets, and relate them to the theory of singular reduced expressions.

\subsection{Realizations}

Let $(W,S)$ be a Coxeter system and $\Bbbk$ be a field\footnote{More generally one can let $\Bbbk$ be a domain, and replace `finite-dimensional' with `free of finite rank' throughout.}. Let $V$ be a \emph{realization} of $W$ over $\Bbbk$. In particular $V$ is a finite dimensional $\Bbbk$-vector space, equipped with a set of roots $\{\al_s\}_{s \in S}$ in $V$ and coroots $\{\al_s^{\vee}\}_{s \in S}$ in $V^* = \Hom_{\Bbbk}(V,\Bbbk)$, for which the formula
\begin{equation} s(v) = v - \langle \al_s^\vee, v\rangle \al_s \end{equation}
defines an action of $W$ on $V$. A realization is one way to generalize the reflection representation of $V$. See \cite[Section 3.1]{Soergelcalculus} for additional technical details.

\begin{ex} Because it is easier than the reflection representation, we encourage the reader to think about the permutation representation of $S_n$ as their running example. Thus $W = S_n$ acts in the usual way on $V := \Bbbk\{x_1,\ldots,x_n\}$, and $\al_{s_i} = x_i - x_{i+1}$. \end{ex}

We make three technical assumptions throughout the body of the text.

The first assumption is that our realization is faithful, i.e. $V$ is a faithful representation of $W$. Faithfulness is a requirement for the algebraic category of (singular) Soergel bimodules to behave as expected (i.e. for the Soergel--Williamson Hom formula to hold). When the realization is not faithful, the diagrammatic version of the Hecke category should still behave well, though the diagrammatic version of the singular Hecke category has not yet been fully defined.

The second assumption is that our realization is \emph{balanced}, see \cite[Def. 3.6]{Bendihedral}. Let $a_{st} := \langle \al_s^\vee, \al_t \rangle$. For example, when $m_{st} = 3$, being balanced is equivalent to
\begin{equation} a_{st} = a_{ts} = -1. \end{equation}
In a general realization with $m_{st} = 3$, one might have $a_{st} = q$ and $a_{ts} = q^{-1}$ for any invertible element $q \in \Bbbk$. There are important unbalanced realizations, see \cite{EQuantumI}. Our techniques should apply to the unbalanced case as well, but would require a significant amount of extra bookkeeping, see e.g. \cite[Chapter 5]{EQuantumI}.

The third assumption, \emph{generalized Demazure surjectivity}, is discussed throughout this chapter (see Definition~\ref{def.Demsurj}.) It is essential for the proper behavior and even the well-definedness of both the algebraic and diagrammatic Hecke categories.

What happens when these assumptions fail will be addressed in \S\ref{subsec:nastier}.

\subsection{Basics of Demazure operators}

Let $R = \Sym(V)$ be the symmetric algebra of $V$, graded so that $\deg V = 2$. For each $I \subset S$ finitary, one can consider the subring $R^I$ of $W_I$-invariant polynomials. We permit ourselves the usual shorthand from \S\ref{ssec:notation}, using notation like $R^{st}$ instead of $R^{\{s,t\}}$ and $R^{Is}$ instead of $R^{I \sqcup \{s\}}$.

For each simple reflection one can define an $R^s$-linear map $\pa_s \co R \to R^s$ of degree $-2$ via the formula
\begin{equation} \pa_s(f) := \frac{f - sf}{\al_s}. \end{equation}
These well-known operators are called \emph{Demazure operators} or \emph{divided difference operators}. Demazure operators  satisfy the \emph{nil-quadratic relation}
\begin{equation} \label{nilquad} \pa_s^2 = 0.\end{equation}
They also (by the balanced assumption) satisfy the braid relations: for example, when $m_{st} = 3$ one has
\begin{equation} \label{dembraid} \pa_s \pa_t \pa_s = \pa_t \pa_s \pa_t. \end{equation}

The Demazure operators can all be viewed as living inside $\End_{\Bbbk}(R)$, where they generate the so-called \emph{nilCoxeter algebra}. This graded algebra has a presentation with generators $\{\pa_s\}_{s \in S}$ (all of degree $-2$). The relations are the braid relations and the nil-quadratic relations. It also has a basis $\{\pa_x\}$ indexed by $x \in W$, where
\[ \pa_x := \pa_{s_1} \pa_{s_2} \cdots \pa_{s_d} \]
using any reduced expression $x = s_1 s_2 \cdots s_d$. These operators satisfy the formula
\begin{equation} \label{composeDem} \pa_x \circ \pa_y = \begin{cases} \pa_{xy} & \text{ if } xy = x.y, \\ 0 & \text{ else}. \end{cases} \end{equation}
If instead one considers the composition $\pa_{s_1} \pa_{s_2} \cdots \pa_{s_d}$ for a non-reduced expression, one gets zero.

Recall that $\leftdes(x)$ denote the left descent set of $x \in W$. The kernel of $\pa_s$ is precisely $R^s$. Using this and \eqref{composeDem} one can see that the following three statements are equivalent, for an element $x \in W$:
\begin{enumerate} \item $s \in \leftdes(x)$.
\item $\pa_s \pa_x = 0$.
\item $\pa_x(f) \in R^s$ for all $f \in R$.
\end{enumerate}
The following lemma is an immediate consequence.
\begin{lem} \label{lem:imageDem} The following are equivalent, for a finitary subset $I \subset S$.
\begin{enumerate} \item $I \subset \leftdes(x)$.
\item $\pa_s \pa_x = 0$ for all $s \in I$.
\item $\pa_x(f) \in R^I$ for all $f \in R$.
\end{enumerate}
\end{lem}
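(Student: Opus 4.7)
The plan is to bootstrap directly off the three-way equivalence for a single simple reflection that the paper has just established. Reading that equivalence componentwise over $s\in I$, conditions (1) and (2) become trivially equivalent: (1) by definition says $s\in\leftdes(x)$ for every $s\in I$, which by the single-reflection equivalence is the same as $\pa_s \pa_x = 0$ for every $s\in I$, which is exactly (2).

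Next I would handle (1) $\Leftrightarrow$ (3). Applying the single-reflection equivalence once more, (1) is equivalent to the statement that for every $s\in I$ and every $f\in R$, $\pa_x(f)\in R^s$. That is to say, $\pa_x(f)\in \bigcap_{s\in I} R^s$ for all $f\in R$. It therefore remains only to identify this intersection with $R^I$.

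For that last step I would invoke the standard fact that a polynomial is $W_I$-invariant if and only if it is fixed by each of the generators $s\in I$ of the parabolic subgroup $W_I$; this gives $R^I = \bigcap_{s\in I} R^s$, so condition (3) as stated is exactly the conjunction of the single-reflection version over $s\in I$.

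There is no real obstacle here: the lemma is a formal package of the preceding three-way equivalence, together with the elementary identification $R^I = \bigcap_{s\in I} R^s$. The only point that might deserve a line of explanation is that, since we are dealing with finitary $I$ and $R$ is the symmetric algebra on a realization where each $s$ acts by the formula $s(v) = v - \langle \al_s^\vee, v\rangle \al_s$, the generators $s \in I$ really do generate $W_I$ as an abstract group, so $W_I$-invariance does reduce to invariance under these generators.
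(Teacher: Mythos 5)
Your proof is correct and follows exactly the route the paper intends: the paper simply declares the lemma an ``immediate consequence'' of the preceding single-reflection equivalence, and your argument spells out that deduction (conditionwise application over $s \in I$ plus the identification $R^I = \bigcap_{s \in I} R^s$).
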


\begin{notation} Let $I \subset S$ be finitary, and recall that $w_I$ is the longest element of $W_I$. Then set $\pa_I := \pa_{w_I}$. \end{notation}

By Lemma \ref{lem:imageDem}, $\pa_I$ is a linear map from $R$ to $R^I$, and is $R^I$-linear since each $\pa_s$ is $R^s$-linear. An alternate description of $\pa_I$ was given by Demazure \cite[Proposition 3b]{Demazure}. For an English-language exposition, see \cite[Chapter 24, p. 463-464]{GBM}.

\begin{thm} \label{thm:Demazure} For any $I \subset S$ finitary we have
\begin{equation} \pa_I(f) = \frac{\sum_{w \in W_I} (-1)^{\ell(w)} w(f)}{\prod_{\alpha \in \Phi^+_I} \alpha}. \end{equation}
The denominator is the product of the positive roots for $W_I$, i.e. the set $$\Phi^+_I:=\{w(\alpha_s)\, \vert \, s\in I, w\in W_I \ \mathrm{and}\ \ell(ws)=
\ell(w)+1 \}.$$ \end{thm}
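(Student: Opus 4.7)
The plan is to define $\nabla_I(f) := \frac{\sum_{w \in W_I}(-1)^{\ell(w)} w(f)}{\prod_{\alpha \in \Phi^+_I}\alpha}$ as a candidate operator, verify it is well-defined and lands in $R^I$, and then prove $\nabla_I = \pa_I$. The numerator $N(f) := \sum_{w \in W_I}(-1)^{\ell(w)} w(f)$ is $W_I$-antiinvariant: for every root $\alpha \in \Phi^+_I$ the associated reflection $s_\alpha \in W_I$ satisfies $\ell(s_\alpha w) - \ell(w)$ odd, so reindexing $w \mapsto s_\alpha w$ gives $s_\alpha(N(f)) = -N(f)$. The kernel of $1 + s_\alpha$ on $R$ equals $\alpha \cdot R$, so $\alpha$ divides $N(f)$; using faithfulness the distinct roots $\alpha \in \Phi^+_I$ are pairwise coprime irreducibles in $R$, so their product divides $N(f)$ in $R$, giving $\nabla_I(f) \in R$. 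Since both numerator and denominator are $W_I$-antiinvariant, the quotient is $W_I$-invariant, so $\nabla_I(f) \in R^I$. Note also that $\pa_I$ and $\nabla_I$ are both $R^I$-linear of degree $-2\ell(w_I)$.

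To prove $\nabla_I = \pa_I$, I would induct on $\ell(w_I)$. The base case $|I|=1$ is the defining formula for $\pa_s$. For the inductive step, fix any $s \in I$; the involution $v \leftrightarrow sv$ partitions $W_I$ into pairs with $\ell(v) < \ell(sv)$, and the identity $(1-s)g = \alpha_s \pa_s(g)$ factors the numerator as
\[
N(f) = \alpha_s \cdot \pa_s\bigl(M(f)\bigr), \qquad M(f) := \!\!\sum_{\substack{v \in W_I \\ sv > v}}\!\! (-1)^{\ell(v)} v(f).
\]
Since $s$ permutes $\Phi^+_I \setminus \{\alpha_s\}$, the partial product $P := \prod_{\alpha \in \Phi^+_I \setminus \{\alpha_s\}}\alpha$ lies in $R^s$, and by $R^s$-linearity of $\pa_s$,
\[
\nabla_I(f) = \frac{\pa_s(M(f))}{P} = \pa_s\!\left(\frac{M(f)}{P}\right).
\]

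The main obstacle is to match $M(f)/P$ with $\pa_{w'}(f)$, where $w' = sw_I$, so that $\nabla_I(f) = \pa_s \pa_{w'}(f) = \pa_{w_I}(f) = \pa_I(f)$. Because $w'$ is typically not the longest element of any parabolic subgroup, the induction on $I$ does not close on its own. I see two reasonable ways forward. Option (i): strengthen the induction by working in the twisted group algebra $\mathrm{Frac}(R)\# W$, where $\pa_s = \alpha_s^{-1}(1-s)$; expand $\pa_{s_1}\cdots\pa_{s_d}$ for a reduced expression of $w_I$ by moving all group elements to the right via the commutation $s \cdot f = s(f) \cdot s$, and verify combinatorially that the only surviving subword contributions are indexed by $W_I$ with the claimed coefficients. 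Option (ii): use that $R$ is a free $R^I$-module with a homogeneous basis concentrated in degrees $\{2\ell(v) : v \in W_I\}$, so that both $\pa_I$ and $\nabla_I$ (being $R^I$-linear of degree $-2\ell(w_I)$) vanish by degree on every basis vector except the top element $\xi_{w_I}$; one then checks directly that both operators send $\xi_{w_I}$ to $1$, forcing equality. Option (ii) is shorter but relies on the invariant-theoretic freeness result for the given realization, which in turn requires generalized Demazure surjectivity; option (i) is more self-contained but requires the combinatorial bookkeeping in the twisted group algebra.
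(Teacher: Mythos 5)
The paper does not prove \Cref{thm:Demazure}: it cites Demazure's original Proposition 3b and refers to [GBM, Chapter 24] for an exposition. So there is no in-paper argument to compare against, and your proposal must stand on its own.

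Your preliminary reductions are correct: $N(f)$ is $W_I$-antiinvariant, each $\alpha\in\Phi^+_I$ divides $N(f)$ (though your divisibility step rests on $2g = g - s_\alpha g$ and so needs a characteristic-$2$ caveat, which the paper's blanket hypotheses do not obviously exclude), the roots are pairwise coprime irreducibles under faithfulness, the quotient is $W_I$-invariant, $N(f) = \alpha_s\,\pa_s(M(f))$, and $\nabla_I(f) = \pa_s\bigl(M(f)/P\bigr)$ since $s$ permutes $\Phi^+_I\setminus\{\alpha_s\}$ and hence $P\in R^s$.

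But you do not finish. The reduction leaves $M(f)/P$ unidentified, and since $sw_I$ is not the longest element of any parabolic subgroup, the induction on $|I|$ cannot close; you correctly diagnose this and then offer two routes, neither of which is carried out. Option (ii) is essentially circular: the freeness of $R$ over $R^I$ with a homogeneous basis in degrees $\{2\ell(v): v\in W_I\}$ is itself Demazure's theorem (this is what \Cref{prop:bases} invokes, citing Demazure), and even granting it one still has to evaluate both operators on the top basis vector, which is not free. Option (i) is the right route and is roughly Demazure's own argument, but the ``combinatorial bookkeeping'' you defer is the entire content of the proof. Concretely: expand $\pa_{w_I}=\pa_{s_1}\cdots\pa_{s_d}$ in the localized skew group algebra; the coefficient of $w_I$ comes from taking the $-s_i$ term in each factor, and tracking the conjugated roots $\alpha_{s_1},\,s_1\alpha_{s_2},\,s_1s_2\alpha_{s_3},\ldots$, which enumerate $\Phi^+_I$, yields $(-1)^{\ell(w_I)}/\prod_{\alpha\in\Phi^+_I}\alpha$; then the two-sided invariances $s\,\pa_{w_I}=\pa_{w_I}=\pa_{w_I}\,s$ for $s\in I$ (equivalent to $\pa_s\pa_{w_I}=0=\pa_{w_I}\pa_s$), combined with the $W_I$-antiinvariance of $\prod_\alpha\alpha$, propagate the coefficient to all of $W_I$ and give $c_w=(-1)^{\ell(w)}/\prod_\alpha\alpha$. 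As written, your proposal stops exactly where the real work begins.
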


\begin{defn}\label{def.Demsurj} A realization satisfies \emph{Demazure surjectivity} if $\pa_s \colon R \to R^s$ is surjective for all $s \in S$. It satisfies \emph{generalized Demazure surjectivity} if $\pa_I \colon R \to R^I$ is surjective for all finitary $I \subset S$. \end{defn}

\begin{lem} \label{lem:productofrootsworks} Generalized Demazure surjectivity holds whenever the realization is faithful and $\Bbbk$ is a field of characteristic zero. \end{lem}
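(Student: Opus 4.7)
The plan is to exhibit, for each finitary $I \subset S$, an explicit element $f_I \in R$ with $\pa_I(f_I) = 1$. Surjectivity is then immediate: for any $g \in R^I$, the $R^I$-linearity of $\pa_I$ (noted after Lemma \ref{lem:imageDem}) gives $\pa_I(g f_I) = g \pa_I(f_I) = g$.

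The natural candidate is
\[
f_I := \frac{1}{|W_I|} \prod_{\alpha \in \Phi_I^+} \alpha,
\]
which makes sense because $\Bbbk$ has characteristic zero, so $|W_I|$ is invertible. Substituting into the formula of Theorem \ref{thm:Demazure},
\[
\pa_I(f_I) = \frac{1}{|W_I| \prod_{\alpha \in \Phi_I^+} \alpha} \sum_{w \in W_I} (-1)^{\ell(w)}\, w\!\left(\prod_{\alpha \in \Phi_I^+} \alpha\right),
\]
the claim $\pa_I(f_I) = 1$ reduces to the identity $w\bigl(\prod_\alpha \alpha\bigr) = (-1)^{\ell(w)} \prod_\alpha \alpha$ for all $w \in W_I$. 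For a simple reflection $s \in I$, the defining formula $s(\alpha_s) = -\alpha_s$ handles the $\alpha_s$ factor, and the standard fact that $s$ permutes $\Phi^+_I \setminus \{\alpha_s\}$ (valid here because $W_I$ is finite and the realization is faithful and balanced, so the usual root system combinatorics apply) gives the sign $-1$ for each simple reflection. The general case follows by induction on $\ell(w)$.

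The only remaining point is to check that $\prod_{\alpha \in \Phi_I^+} \alpha$ is a nonzero element of $R$, so that the denominator in Theorem \ref{thm:Demazure} makes sense. Since $R$ is a domain, it suffices to show each positive root $\alpha = w(\alpha_s)$ is nonzero in $V$. If some $w(\alpha_s) = 0$, then the reflection $wsw^{-1}$, acting on $V$ by $v \mapsto v - \langle w(\alpha_s^\vee), v \rangle w(\alpha_s)$, would act as the identity, contradicting faithfulness of the realization.

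The main obstacle I anticipate is the permutation-of-positive-roots step: in the setting of a general realization one does not a priori have the full classical root system at one's disposal, and one needs to verify that the $W_I$-orbit structure on $\{w(\alpha_s) : w \in W_I,\, s \in I\}$ behaves as it does for the reflection representation. However, once one knows the positive roots are nonzero (by faithfulness) and that the balanced assumption forces the rank-two sub-realizations to behave correctly, this reduces to a standard induction on length, so I expect no genuine difficulty.
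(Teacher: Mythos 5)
Your proof takes essentially the same approach as the paper: both use $\mu_I = \prod_{\alpha \in \Phi^+_I}\alpha$, compute $\pa_I(\mu_I) = |W_I|$ via Theorem~\ref{thm:Demazure} together with the sign-character property $w(\mu_I) = (-1)^{\ell(w)}\mu_I$, and conclude surjectivity from $R^I$-linearity and the invertibility of $|W_I|$ in characteristic zero. You supply more detail on the permutation-of-positive-roots step and the nonvanishing of $\mu_I$, which the paper leaves implicit under the phrase ``the span of $\mu_I$ affords the sign representation.''
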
 

\begin{proof} Let $\mu_I$ denote the product of the positive roots for $W_I$. 
When the realization is faithful, the positive roots are in bijection with the set of reflections in $W_I$.  Then the span of $\mu_I$ affords the sign representation. One can deduce from Theorem~\ref{thm:Demazure} that $\pa_I(\mu_I)$ equals the size of $W_I$, viewed as an element of $\Z \subset \Bbbk$. Since the image of $\pa_I$ is an $R^I$-submodule of $R^I$, and contains an invertible scalar, it must be everything.\end{proof}

The reason to care about generalized Demazure surjectivity is that it is equivalent to a more interesting property: that the ring extension $R^I \subset R$ is graded Frobenius, with trace map $\pa_I$, when $I$ is finitary. Moreover, when $I \subset J \subset S$ are both finitary, $R^J \subset R^I$ is a graded Frobenius extension, with trace map
\begin{equation} \label{paJIdefn} \pa^I_J := \pa_{w_J w_I^{-1}}. \end{equation}
For more details, see \S\ref{sec:frob}.

\subsection{NilCoxeter algebra as associated graded} \label{ss:nilCoxeter}

For $(W,S)$ a Coxeter system, let $(W,*,S)$ denote the \emph{Coxeter $*$-monoid}. It is generated by $s \in S$, modulo the braid relations and the \emph{$*$-quadratic relation}
\begin{equation} \label{starquadordinary} s * s = s. \end{equation}
Its elements agree with those of $W$, where $x \in W$ corresponds to $s_1 * s_2 * \cdots * s_d$ for a reduced expression. It is known that
\begin{equation} \label{starinequality} \ell(w * x) \le \ell(w) + \ell(x), \end{equation}
with equality if and only if $w * x = wx$ if and only if $w.x = wx$, see \cite[Lem. 3.2]{EKo}.
When this monoid is linearized into an algebra, it is %an example of a generalized Hecke algebra
equal to the Hecke algebra at $v=0$ \cite[Ch. 1]{Mathas}, known as the \emph{$0$-Hecke algebra}.

We wish to draw the reader's attention to the following observation. The $0$-Hecke algebra is filtered, where each element of $W$ lives in a degree equal to its length. This follows from \eqref{starinequality}. Thus one can consider the associated graded algebra. A priori, the associated graded algebra need not inherit the associated graded of the presentation of $(W,*,S)$, but in this case it does. If $\pa_s$ represents the image of $s$ in the associated graded, then \eqref{starquadordinary} is replaced by \eqref{nilquad}, while the braid relations are preserved. Thus the nilCoxeter algebra is the associated graded of the $0$-Hecke algebra.

In the rest of this chapter we consider a singular version of this phenomenon. In \cite[Definition 1.22]{EKo}, the Coxeter $*$-monoid is generalized to a category $\SC$, called the \emph{singular Coxeter monoid}, with one object for each finitary parabolic subset $I \subset S$. The morphisms from $J$ to $I$ are given by $(I,J)$-cosets, and composition acts on maximal elements by $*$-multiplication. In particular, $\End_{\SC}(\mt)$ is the Coxeter $*$-monoid. 

\begin{rem} One should not expect there to be a category similar to $\SC$ but where $\End(\mt)$ is the Coxeter group $W$ (rather than the $*$-monoid). This is because ``cosets are not invertible.'' \end{rem}

In the rest of this chapter we prove that the associated graded of $\SC$ matches a category constructed with Demazure operators, and we associate a Demazure operator to any double coset. As a consequence, the braid relations like \eqref{switchback} give rise to relations between Demazure operators.

\subsection{The nilCoxeter algebroid}

We will define a category  built out of Demazure operators and give three different  versions of it, that we will call $\Dem, \Dem'$ and $\Dem''$. We assume generalized Demazure surjectivity throughout.

\begin{defn} Let $p$ be an $(I,J)$-coset. Let $\pa_p := \pa_{\ma{p} w_J^{-1}}$, viewed as a linear map $R^J \to R^I$. \end{defn}

A priori, $\pa_{\ma{p} w_J^{-1}}$ is a linear map $R \to R$, but by the following lemma, it restricts to a map $R^J \to R^I$. The notation $\pa_p$ (as opposed to $\pa_{\ma{p} w_J^{-1}}$) indicates that we have restricted the domain of the function to $R^J$.

\begin{lem} \label{lem:sendsJtoI} The map $\pa_p$ sends $R^J$ to $R^I$. \end{lem}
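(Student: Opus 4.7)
The plan is to exploit a reduced factorization $\ma{p} = (\ma{p} w_J^{-1}) . w_J$, factor the nilCoxeter operator accordingly, and then use generalized Demazure surjectivity to reduce the problem to an instance of \Cref{lem:imageDem}.

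First, I would verify that $\ma{p} = (\ma{p} w_J^{-1}) . w_J$ is a reduced composition, i.e., that $\ell(\ma{p}) = \ell(\ma{p} w_J^{-1}) + \ell(w_J)$. This is an instance of the standard fact that if $J \subset \rightdes(w)$ then $w$ admits a reduced factorization through $w_J$ on the right. Here $J \subset \rightdes(\ma{p})$ is noted in the excerpt just after \Cref{thm:addremove} (using \cite[Lemma 2.12(5)]{EKo}). By \eqref{composeDem} in the nilCoxeter algebra, this reduced factorization yields
\begin{equation*}
\pa_{\ma{p}} \;=\; \pa_{\ma{p} w_J^{-1}} \circ \pa_{w_J} \;=\; \pa_p \circ \pa_J,
\end{equation*}
as endomorphisms of $R$ (before restricting the domain of $\pa_p$).

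Next, since $I \subset \leftdes(\ma{p})$ (again by \cite[Lemma 2.12(5)]{EKo}), \Cref{lem:imageDem} gives that $\pa_{\ma{p}}(h) \in R^I$ for every $h \in R$. On the other hand, generalized Demazure surjectivity asserts that $\pa_J\colon R \to R^J$ is surjective. So for any $g \in R^J$ we may choose an $h \in R$ with $\pa_J(h) = g$, and then
\begin{equation*}
\pa_p(g) \;=\; \pa_p(\pa_J(h)) \;=\; \pa_{\ma{p}}(h) \;\in\; R^I,
\end{equation*}
as required.

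The only nontrivial ingredient is the reduced factorization $\ma{p} = (\ma{p} w_J^{-1}) . w_J$, which is the reason we must restrict to $R^J$ rather than working on all of $R$: the operator $\pa_{\ma{p} w_J^{-1}}$ need not land in $R^I$ on the full ring, as $I$ is typically not contained in $\leftdes(\ma{p} w_J^{-1})$, but the surjectivity of $\pa_J$ together with the factorization lets us transfer the stronger statement about $\pa_{\ma{p}}$ to the restricted domain $R^J$.
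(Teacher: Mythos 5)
Your proof is correct and takes essentially the same approach as the paper: both factor $\pa_{\ma{p}} = \pa_p \circ \pa_J$ via the reduced composition $\ma{p} = (\ma{p}w_J^{-1}).w_J$, use surjectivity of $\pa_J$ to identify the image of $\pa_p$ on $R^J$ with the image of $\pa_{\ma{p}}$, and then use $I \subset \leftdes(\ma{p})$ to land in $R^I$. The only cosmetic difference is that the paper makes the last step by factoring $\pa_{\ma{p}} = \pa_I \circ \pa_{w_I^{-1}\ma{p}}$ and using $\im \pa_I = R^I$, whereas you invoke \Cref{lem:imageDem}; these amount to the same observation.
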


\begin{proof} 
Since $\im\pa_J = R^J$ and $(\ma{p}w_J\inv).w_J=\ma{p}=w_I.(w_I\inv\ma{p})$, we have
\begin{equation} \im \pa_p= \im (\pa_p \circ \pa_{J}) = \im \pa_{\ma{p}}=\im (\pa_I \circ \pa_{w_I\inv \ma{p}})\subset \im \pa_I = R^I.\qedhere\end{equation}
\end{proof}

The following example is crucial.

\begin{ex}\label{ex:3.10} Suppose $I \subset J$ and that $p$ is the minimal $(I,J)$-coset, i.e., the double coset containing the identity element. Then $\ma{p} = w_J$, so $\pa_p = \pa_{\id}$ is the identity operator on $R$, and induces the inclusion map $R^J \hookrightarrow R^I$. If $q$ is the minimal $(J,I)$-coset, then $\ma{q} = w_J$ and $\pa_q = \pa_{w_J w_I^{-1}} = \pa^I_J$.
\end{ex}

\begin{defn} Let $(W,S)$ be a Coxeter system and $R$ be the polynomial ring of some realization. The \textit{nilCoxeter algebroid}  $\Dem = \Dem(W,S,R)$ is the following subcategory of $\Bbbk$-vector spaces. 
The objects are finitary subsets $I \subset S$, associated with the vector space $R^I$. 
The morphism space from $J$ to $I$ consists of $\Bbbk$-linear combinations of the operators $\pa_p$ associated to $(I,J)$-cosets $p$.  \end{defn}

Two important properties are not obvious from this definition.
\begin{itemize}
\item The set $\{\pa_p\}$ is linearly independent as $p$ ranges over all $(I,J)$-cosets. 
\item When $p$ is an $(I,J)$-coset and $q$ is a $(J,K)$-coset, then $\pa_p \circ \pa_q$ is in the span of Demazure operators for $(I,K)$-cosets. Thus composition is well-defined on $\Dem$.
\end{itemize}
To justify these properties, we will describe the same subcategory of vector spaces in a different way, and then prove that the two categories agree.

\begin{defn} \label{def:demprime}  Let $(W,S)$ be a Coxeter system and $R$ be the polynomial ring of some realization. Let $\Dem' = \Dem'(W,S,R)$ denote the following category, which is a subcategory of vector spaces. The objects are finitary subsets $I \subset S$, associated with the vector space $R^I$. The morphism space from $I$ to $J$ consists of linear combinations of compositions of the following generating morphisms:
\begin{enumerate}
    \item Whenever $Is$ is finitary, a morphism $Is \to I$ corresponding to the inclusion of rings $R^{Is} \hookrightarrow R^I$.
    \item Whenever $Is$ is finitary, a morphism $I \to Is$ corresponding to the Demazure operator $\pa^I_{Is} \co R^I \to R^{Is}$.
\end{enumerate}
That is, the morphisms in $\Dem'$ are all linear combinations of maps obtained as compositions of inclusion maps and Frobenius trace maps.
\end{defn}

\begin{notation} For a singular expression $I_{\bullet} = [I_0, \ldots, I_d]$, let $\pa_{I_{\bullet}}$ denote the corresponding composition of inclusion maps and Frobenius trace maps, a linear transformation from $R^{I_d}$ to $R^{I_0}$.\end{notation}

\begin{ex} Associated to the expression $[\mt, s, \mt, t, \mt, u, \mt]$ we have the endomorphism of $R$ given by $\pa_s \pa_t \pa_u$. \end{ex}

\begin{ex} Let $\{s,t,u\}$ be the simple reflections in type $A_3$, with $m_{su} = 2$. Associated to the expression $[st, s, su]$ we have the map $\pa_s \pa_t \co R^{su} \to R^{st}$.\newline
Associated to both $[s,\mt,t,\mt,s]$ and $[s,st,s]$ we have the map $\pa_s \pa_t \co R^s \to R^s$. \end{ex}

It is easier to describe the operator $\pa_{I_{\bullet}}$ using multistep expressions.

\begin{lem} Suppose that $I_{\bullet}$ corresponds to the multistep expression
\[ [[ I_0\subset K_1\supset I_1\subset K_2 \supset \cdots \subset K_m\supset I_m]]. \]
Then we have
\begin{equation} \label{multistepdemazure} \pa_{I_{\bullet}} = \pa_{w_{K_1} w_{I_1}^{-1}} \circ \pa_{w_{K_2} w_{I_2}^{-1}} \circ \cdots \circ \pa_{w_{K_m} w_{I_m}^{-1}}. \end{equation} \end{lem}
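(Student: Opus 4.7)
The plan is to unfold $\pa_{I_{\bullet}}$ along any singlestep refinement of the multistep expression, and then use the composition law \eqref{composeDem} together with the telescoping identity $w_A = (w_A w_B^{-1}).w_B$ for nested finitary subsets $B \subset A$.

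First, I would choose any singlestep refinement of the multistep: each upward inclusion $I_{j-1} \subset K_j$ is expanded into a sequence of simple-reflection additions (each contributing an inclusion generator $R^{Is} \hookrightarrow R^I$), and each downward inclusion $K_j \supset I_j$ is expanded into a sequence of simple-reflection removals (each contributing a Demazure trace generator $\pa^{I}_{Is}$). At the level of $\Bbbk$-linear operators on $R$, every inclusion is just the identity, so the ``up'' blocks contribute nothing, even though they change the ambient invariant subring from $R^{K_j}$ to $R^{I_{j-1}}$.

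The core of the proof is handling a single ``down'' block from $K_j$ to $I_j$. Enumerate the simple reflections being removed as $t_1, \ldots, t_l$ in the order used by the chosen refinement, set $A_i := K_j \setminus \{t_1, \ldots, t_i\}$, so that $A_0 = K_j$ and $A_l = I_j$. By \eqref{paJIdefn} the corresponding composition of generators is
\[ \pa_{w_{A_0} w_{A_1}^{-1}} \circ \pa_{w_{A_1} w_{A_2}^{-1}} \circ \cdots \circ \pa_{w_{A_{l-1}} w_{A_l}^{-1}}. \]
The algebraic product $(w_{A_0} w_{A_1}^{-1}) \cdots (w_{A_{l-1}} w_{A_l}^{-1})$ telescopes to $w_{K_j} w_{I_j}^{-1}$. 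Since $B \subset A$ finitary gives $\ell(w_A w_B^{-1}) = \ell(A) - \ell(B)$, the sum of the lengths of the factors is $\ell(K_j) - \ell(I_j) = \ell(w_{K_j} w_{I_j}^{-1})$, so the telescoping is a reduced composition at every stage. Thus iterated application of \eqref{composeDem} collapses the above to $\pa_{w_{K_j} w_{I_j}^{-1}}$.

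Concatenating the ``up'' blocks (which contribute the identity) with the ``down'' blocks then yields the right-hand side of \eqref{multistepdemazure}. As a side benefit, the argument shows independence of the chosen refinement: \eqref{composeDem} produces a single operator $\pa_x$ depending only on $x$, not on any reduced expression. There is no real obstacle; the only substantive input beyond bookkeeping is the length-additivity in the telescoping, which is the same parabolic fact already used in \eqref{mapmip} and in the definition of $\pa^I_J$.
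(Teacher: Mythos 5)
Your proof is correct and takes essentially the same approach as the paper. The paper's two-sentence proof reads the expression right to left, asserting that each "down" block contributes the single Demazure operator $\pa^{I_k}_{K_k}$ and each "up" block an inclusion; you make explicit the small telescoping argument (via length-additivity and \eqref{composeDem}) that the paper leaves implicit, which justifies collapsing each block of singlestep trace generators into the corresponding $\pa_{w_{K_j} w_{I_j}^{-1}}$.
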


\begin{proof} We read the expression from right to left, and let $k$ range from $m$ down to $1$. The parabolic subgroup grows from $I_k$ to $K_k$, so one applies $\pa^{I_k}_{K_k} = \pa_{w_{K_k} w_{I_k}^{-1}}$. Then the parabolic subgroup shrinks from $K_k$ to $I_{k-1}$, which corresponds to an inclusion map (nothing happens). \end{proof}

\begin{prop} \label{prop:demred} If $I_{\bullet} \expr p$ is a reduced expression then $\pa_{I_{\bullet}} = \pa_p$. Otherwise, $\pa_{I_{\bullet}} = 0$. \end{prop}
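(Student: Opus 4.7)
The plan is to reduce the claim to an identity in the ordinary nilCoxeter algebra by post-composing with $\pa_{w_{I_m}}\co R\twoheadrightarrow R^{I_m}$ and then applying generalized Demazure surjectivity, which makes this operator surjective.

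By the multistep formula \eqref{multistepdemazure}, write $\pa_{I_\bullet}=\pa_{x_1}\circ\cdots\circ\pa_{x_m}$, where $x_k:=w_{K_k}w_{I_k}^{-1}$ has length $\ell(K_k)-\ell(I_k)$. Choose reduced expressions for each $x_k$ and for $w_{I_m}$, and concatenate them. The resulting word of simple reflections has total length
\[
\sum_{k=1}^m\ell(x_k)\,+\,\ell(w_{I_m})\;=\;\sum_{k=1}^m\ell(K_k)-\sum_{k=1}^{m-1}\ell(I_k),
\]
and its product in $W$ equals $x_1x_2\cdots x_m\cdot w_{I_m}=w_{L_\bullet}$. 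Iterating the nilCoxeter identity \eqref{composeDem}, the composition $\pa_{I_\bullet}\circ\pa_{w_{I_m}}$ equals $\pa_{w_{L_\bullet}}$ when this concatenated word is reduced, and equals $0$ otherwise. Crucially, the word is reduced exactly when $\ell(w_{L_\bullet})$ equals this sum, which is literally the defining condition \eqref{reduced} for $I_\bullet$ to be reduced. Hence
\[
\pa_{I_\bullet}\circ\pa_{w_{I_m}}=\begin{cases}\pa_{\ma{p}} & \text{if $I_\bullet\expr p$ is reduced,}\\ 0 & \text{otherwise.}\end{cases}
\]

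Now I invoke generalized Demazure surjectivity: every $f\in R^{I_m}$ has the form $f=\pa_{w_{I_m}}(g)$ for some $g\in R$. If $I_\bullet$ is not reduced, then $\pa_{I_\bullet}(f)=(\pa_{I_\bullet}\circ\pa_{w_{I_m}})(g)=0$ for all $f\in R^{I_m}$, so $\pa_{I_\bullet}$ vanishes on $R^{I_m}$. If instead $I_\bullet\expr p$ is reduced, the same computation gives $\pa_{I_\bullet}(f)=\pa_{\ma{p}}(g)$. To match this with $\pa_p(f)$, note that $I_m\subset\rightdes(\ma{p})$ (since $p$ is an $(I_0,I_m)$-coset), so $\ma{p}=(\ma{p}w_{I_m}^{-1})\cdot w_{I_m}$ has additive length; \eqref{composeDem} gives $\pa_{\ma{p}w_{I_m}^{-1}}\circ\pa_{w_{I_m}}=\pa_{\ma{p}}$, and therefore
\[
\pa_p(f)=\pa_{\ma{p}w_{I_m}^{-1}}\bigl(\pa_{w_{I_m}}(g)\bigr)=\pa_{\ma{p}}(g)=\pa_{I_\bullet}(f).
\]

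The only mildly clever step is the decision to post-compose with $\pa_{w_{I_m}}$: this converts the problem into one about operators on all of $R$, where lengths and nilCoxeter relations are transparent, and makes the word-length comparison coincide exactly with \eqref{reduced}. After that move, everything follows from \eqref{composeDem} and the surjectivity of $\pa_{w_{I_m}}$ onto $R^{I_m}$.
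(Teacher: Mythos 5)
Your proof is correct, and the post-composition with $\pa_{w_{I_m}}$ is a genuinely useful move that differs from the paper's argument. The paper instead applies \eqref{composeDem} directly to $\pa_{x_1}\circ\cdots\circ\pa_{x_m}$ as operators on all of $R$ (where $x_k=w_{K_k}w_{I_k}^{-1}$), asserting that $I_\bullet$ is reduced if and only if the lengths add in $x_1.x_2.\cdots.x_m=\ma{p}w_{I_m}^{-1}$, and then restricts to $R^{I_m}$. The forward implication (reduced $\Rightarrow$ lengths add) holds, by a sandwich argument: $\ell(\ma{p}w_{I_m}^{-1})\leq\sum\ell(x_k)$ always, and $\ell(\ma{p}w_{I_m}^{-1})\geq\ell(\ma{p})-\ell(I_m)=\sum\ell(x_k)$ by \eqref{reduced}. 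But the converse is not literally true: for the non-reduced expression $[st,s,st]$ with $m_{st}=3$ one has $x_1=st$, $x_2=e$, and the lengths do add in $x_1.x_2$, yet $\pa_{I_\bullet}=\pa_{st}|_{R^{st}}=0$ only because the restriction to the subring kills it. Your argument handles this case transparently, since appending $\pa_{w_{I_m}}$ makes the total word length $\sum\ell(x_k)+\ell(I_m)$ match the right-hand side of \eqref{reduced} on the nose, and generalized Demazure surjectivity converts vanishing of $\pa_{I_\bullet}\circ\pa_{w_{I_m}}$ into vanishing of $\pa_{I_\bullet}$ on $R^{I_m}$. So your proof is a cleaner and more watertight version of the intended argument; the only cost is the explicit appeal to generalized Demazure surjectivity, which the paper assumes throughout this chapter anyway. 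One small remark: when you justify $I_m\subset\rightdes(\ma{p})$ by citing that $p$ is an $(I_0,I_m)$-coset, you could equally note that this is immediate from the reducedness you just established, since $\ma{p}=(\ma{p}w_{I_m}^{-1}).w_{I_m}$ is part of the reduced factorization you built.
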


\begin{proof} Recall from \eqref{map} that 
\[\ma{p} = w_{K_1} w_{I_1}^{-1} w_{K_2} \cdots w_{I_{m-1}}^{-1} w_{K_m}.\]
By definition (see \eqref{reduced}), if $I_{\bullet}$ is reduced then the lengths add in the expression
\[ (w_{K_1} w_{I_1}^{-1}) . (w_{K_2} w_{I_2}^{-1}) . \cdots . (w_{K_m} w_{I_m}^{-1}) = \ma{p} w_{I_m}^{-1}. \]
Thus by \eqref{multistepdemazure} and \eqref{composeDem}, $\pa_{I_{\bullet}} = \pa_p$ when $I_{\bullet}$ is reduced. Otherwise the lengths do not add, and the result is zero by \eqref{composeDem}. \end{proof}

\begin{cor}\label{cor:delpindep}
The set $\{\pa_p\}$, where $p$ ranges over all $(I,J)$-cosets, is a linearly independent collection of maps $R^J \to R^I$.
\end{cor}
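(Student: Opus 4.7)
My plan is to reduce the statement to the classical linear independence of $\{\pa_w\}_{w \in W}$ as operators on $R$, which forms a basis of the nilCoxeter algebra. Suppose a relation $\sum_p c_p \pa_p = 0$ holds as linear maps $R^J \to R^I$, with $p$ ranging over $(I,J)$-cosets. Since generalized Demazure surjectivity is in force, $\pa_J \co R \to R^J$ is surjective, so this relation is equivalent, after precomposing with $\pa_J$, to the identity $\sum_p c_p (\pa_p \circ \pa_J) = 0$ of operators on all of $R$.

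The key identification I would then make is that $\pa_p \circ \pa_J = \pa_{\ma{p}}$ inside the nilCoxeter algebra. By definition $\pa_p = \pa_{\ma{p} w_J^{-1}}$, and the first equality in \eqref{map} expresses $\ma{p}$ as a reduced composition ending in $w_J$, so in particular $\ell(\ma{p} w_J^{-1}) + \ell(w_J) = \ell(\ma{p})$. The composition rule \eqref{composeDem} then yields $\pa_{\ma{p} w_J^{-1}} \circ \pa_{w_J} = \pa_{\ma{p}}$. Substituting, the assumed relation becomes $\sum_p c_p \pa_{\ma{p}} = 0$ in $\End_\Bbbk(R)$.

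To finish, I observe that distinct $(I,J)$-cosets $p$ have distinct maximal elements $\ma{p} \in W$, so the displayed identity is a linear relation among distinct basis elements of the nilCoxeter algebra, and each $c_p$ must vanish. There is no real obstacle here: the argument is a short bookkeeping reduction whose only inputs are generalized Demazure surjectivity (an assumption), the length-additivity in \eqref{map}, the composition formula \eqref{composeDem}, and the classical basis theorem for the nilCoxeter algebra.
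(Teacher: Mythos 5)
Your proof is correct and matches the paper's own argument: precompose with $\pa_J$ (and, implicitly, postcompose with the inclusion $R^I \hookrightarrow R$) to convert the relation into one among the ordinary operators $\pa_{\ma{p}}$, then invoke linear independence in the nilCoxeter algebra together with the fact that distinct double cosets have distinct maximal elements. One small note: the length-additivity $\ell(\ma{p} w_J^{-1}) + \ell(w_J) = \ell(\ma{p})$ is more directly visible from \eqref{mapmip}, or from the stated fact $J \subset \rightdes(\ma{p})$, than from \eqref{map}, whose rightmost reduced factor is $w_{I_{m-1}}^{-1} w_{K_m}$ rather than $w_J$.
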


\begin{proof}
Suppose there is a linear relation of the form $\sum a_p \pa_p = 0$, ranging over $(I,J)$-cosets $p$. By \cite[Prop. 4.13]{EKo}, we can precompose any $(I,J)$-coset $p$ with $[[J,\mt]]$ and postcompose with $[[\mt, I]]$, to obtain a reduced expression for the $(\mt, \mt)$-coset $\{\ma{p}\}$. Thus
\begin{equation} 0 = \pa_{[[\mt, I]]} \circ \left( \sum a_p \pa_p \right) \circ \pa_{[[J,\mt]]} = \sum a_p \pa_{\ma{p}}. \end{equation}
Since ordinary Demazure operators are linearly independent, we deduce that $a_p = 0$ for all $p$, as desired. 
\end{proof}

\begin{cor} Let $p$ be a $(I,J)$-coset and $q$ be a $(K,I)$-coset.
Then we have
\begin{equation} \label{composedemazureforcosets} \pa_q \circ \pa_p = \begin{cases} \pa_{q . p} & \text{ if } \ma{p} w_I^{-1} \ma{q} = (\ma{p} w_I^{-1}).\ma{q} = \ma{p} . (w_I^{-1} \ma{q}), \\ 0 & \text{ else}. \end{cases}\end{equation}
\end{cor}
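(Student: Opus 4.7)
The plan is to reduce the formula to the composition formula \eqref{composeDem} for ordinary Demazure operators, by routing $\pa_p$ and $\pa_q$ through any choice of reduced expressions and then invoking the concatenation criterion for reducedness.

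First I would pick reduced expressions $I_\bullet \expr p$ and $K_\bullet \expr q$; these exist by \cite[Prop.~2.31]{EKo}, and recall that $I_\bullet$ runs from $I$ to $J$ while $K_\bullet$ runs from $K$ to $I$. By \Cref{prop:demred}, we have $\pa_p = \pa_{I_\bullet}$ and $\pa_q = \pa_{K_\bullet}$. Unpacking the definition of $\pa_{(-)}$ as a composition of inclusion and Frobenius trace generators along the expression, it follows immediately that
\[ \pa_q \circ \pa_p = \pa_{K_\bullet \circ I_\bullet}, \]
where $K_\bullet \circ I_\bullet$ is the concatenated singlestep expression from $K$ to $J$ (the concatenation is legal because $K_\bullet$ ends at $I$ and $I_\bullet$ begins at $I$).

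Next I would apply \Cref{prop:concat} to the composable pair $(K_\bullet, I_\bullet)$, with middle parabolic equal to $I$: it says that $K_\bullet \circ I_\bullet$ is reduced if and only if the length-additivity condition on the product involving $w_I^{-1}$, $\ma{p}$, and $\ma{q}$ stated in the corollary holds, and in the reduced case the concatenation is an expression for the coset $q . p$ in the sense of \Cref{dotforcosets}. A second application of \Cref{prop:demred} to the composed expression then yields the desired dichotomy: $\pa_{K_\bullet \circ I_\bullet} = \pa_{q . p}$ when the condition holds, and $\pa_{K_\bullet \circ I_\bullet} = 0$ otherwise.

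There is essentially no obstacle. The argument is a direct three-step chain through \Cref{prop:demred}, then \Cref{prop:concat}, then \Cref{prop:demred} again, with no additional combinatorial input required; the hypotheses and conclusions of the corollary are built to match those of \Cref{prop:concat}, and the formula $\pa_{q.p} = \pa_{\ma{q.p}\, w_J^{-1}} = \pa_{\ma{q} w_I^{-1} \ma{p} w_J^{-1}}$ makes the two expressions for the nonzero value agree on the nose.
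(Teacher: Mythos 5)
Your approach matches the paper's, which is precisely ``This follows immediately from Propositions~\ref{prop:demred} and~\ref{prop:concat}''; you have correctly fleshed out the three-step chain. One thing you should have flagged, though: when you apply \Cref{prop:concat} to the ordered pair $(K_\bullet, I_\bullet)$ with $K_\bullet \expr q$, $I_\bullet \expr p$, and middle parabolic $I$, the resulting reducedness criterion involves the element $\ma{q}\,w_I^{-1}\,\ma{p}$, with $\ma{q}$ on the \emph{left}, whereas the displayed condition in the corollary is written with the factors $\ma{p}$ and $\ma{q}$ transposed. Your own closing formula $\pa_{q.p} = \pa_{\ma{q}\,w_I^{-1}\,\ma{p}\,w_J^{-1}}$ uses the correct order, so your argument is right, but as written you assert that this agrees with ``the condition stated in the corollary,'' which it does not. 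The transposed condition is genuinely different: in $S_3$ with $I=J=K=\emptyset$, $\ma{q}=s$ and $\ma{p}=st$, one has $\pa_q\circ\pa_p=\pa_s\pa_s\pa_t=0$ even though $\ma{p}\,w_I^{-1}\,\ma{q}=sts$ is a reduced product. So the displayed condition in the corollary is a typo, and a careful proof of exactly that statement should catch it rather than silently correct it.
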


\begin{proof} This follows immediately from Propositions \ref{prop:demred} and \ref{prop:concat}. \end{proof}

\begin{cor} \label{cor:DDequal} The nilCoxeter algebroid $\Dem$ is equal (as a subcategory of vector spaces) to $\Dem'$. Thus it is well-defined (i.e. closed under composition). The morphisms $\{\pa_p\}$ (ranging over all double cosets) form a basis for morphisms in $\Dem$. \end{cor}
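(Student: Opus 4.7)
The plan is to prove the equality $\Dem = \Dem'$ by two inclusions, and then deduce the remaining claims formally from results already established. Concretely, I would split the statement as follows: (a) $\Dem' \subseteq \Dem$; (b) $\Dem \subseteq \Dem'$; (c) closure of $\Dem$ under composition, which follows immediately from (a) and (b) since $\Dem'$ is a genuine subcategory by construction; (d) the basis claim, which is linear independence plus spanning.

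For (a), the morphism spaces in $\Dem'$ are by definition spanned by compositions of the generators in Definition~\ref{def:demprime}, and each such composition is precisely of the form $\pa_{I_{\bullet}}$ for some singlestep expression $I_{\bullet}$. Proposition~\ref{prop:demred} asserts that every such $\pa_{I_{\bullet}}$ is either $\pa_p$ for the double coset $p$ that $I_{\bullet}$ expresses, or is zero when $I_{\bullet}$ is not reduced. Hence every morphism of $\Dem'$ lies in the span of $\{\pa_p\}$, which is by definition the morphism space of $\Dem$.

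For (b), I invoke the fact cited after \eqref{represents} (via \cite[Prop.~2.31]{EKo}, built on \cite[Prop.~1.3.4]{WillThesis}) that every $(I,J)$-coset $p$ admits a reduced expression $I_{\bullet} \expr p$. Applying Proposition~\ref{prop:demred} once more, $\pa_p = \pa_{I_{\bullet}}$, and the right-hand side is a composition of the generators of $\Dem'$ by construction of $\pa_{I_{\bullet}}$. Thus each generator $\pa_p$ of $\Dem$ lies in $\Dem'$, so the span $\Dem$ does too. Combined with (a) this gives $\Dem = \Dem'$, whence (c) is automatic: $\Dem'$ is visibly closed under composition, so the a priori concern about $\pa_q \circ \pa_p$ (raised in the two bullets before Definition~\ref{def:demprime}) is resolved.

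For (d), spanning is tautological from the definition of $\Dem$, and linear independence is exactly Corollary~\ref{cor:delpindep}. The only mild subtlety I anticipate is being careful that the two definitions really describe the \emph{same} subcategory of $\Bbbk$-vector spaces (same objects, same morphism spaces, same composition inherited from vector spaces) rather than merely equivalent ones; but since the objects on both sides are the rings $R^I$ with $I \subset S$ finitary, and the morphism spaces have just been shown to coincide as subspaces of $\Hom_{\Bbbk}(R^J, R^I)$, this is immediate. There is no real obstacle in this corollary: all the technical content — the existence of reduced expressions, the interpretation of $\pa_{I_{\bullet}}$ via \eqref{multistepdemazure}, the vanishing on non-reduced expressions, and linear independence — has been packaged into the lemmas and corollaries preceding it, and the corollary is essentially a bookkeeping assembly of those inputs.
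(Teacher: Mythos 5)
Your proof is correct and follows essentially the same route as the paper: both directions of the inclusion $\Dem = \Dem'$ hinge on Proposition~\ref{prop:demred} (every $\pa_{I_\bullet}$ is $\pa_p$ or zero, and every $\pa_p$ is realized by a reduced expression), with linear independence supplied by Corollary~\ref{cor:delpindep}. The only cosmetic difference is that you invoke Proposition~\ref{prop:demred} directly for $\Dem' \subseteq \Dem$, whereas the paper routes through Example~\ref{ex:3.10} and the derived composition rule \eqref{composedemazureforcosets}; the substance is identical.
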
 

\begin{proof} The objects of  $\Dem$ and $\Dem'$ being the same and the composition of morphisms for both categories just being the composition of functions, to show their equality we only need to prove an equality of Hom spaces. 

Every double coset has a reduced expression, thus $\pa_p$ is a morphism in $\Dem'$ for all $p$. This implies that the morphisms in $\Dem$ are contained in those of $\Dem'.$  Meanwhile, by Example~\ref{ex:3.10}, $\Dem$ contains the generators in $\Dem'$, and by \eqref{composedemazureforcosets} every composition of the generators is either $\pa_p$, for some $p$, or zero. Thus the morphisms in $\Dem'$ are contained in those of $\Dem$.

The set $\{\pa_p\}$ spans all morphisms by definition of $\Dem$, and is linearly independent by Corollary \ref{cor:delpindep}. \end{proof}

\begin{thm} \label{thm:dempresent} Suppose the realization is faithful and balanced. The nilCoxeter algebroid has a presentation, with generators as in Definition \ref{def:demprime}. The relations are:
\begin{enumerate}
    \item The braid relations \eqref{braidrelns}.
    \item The \emph{(singular) nil-quadratic relation}
    \begin{equation} \pa_{[Is,I,Is]} = 0. \end{equation}
\end{enumerate}
A basis for $\Hom_{\Dem}(I,J)$ is $\{\pa_p\}$ ranging over all $(I,J)$-cosets $p$. Moreover, $\Dem$ is isomorphic to the associated graded of the singular Coxeter monoid with respect to the length filtration. \end{thm}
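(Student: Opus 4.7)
The plan is to introduce an auxiliary $\Bbbk$-linear category $\Dem''$ presented by the generators of Definition~\ref{def:demprime} modulo the braid relations \eqref{braidrelns} and the nil-quadratic relation $\pa_{[Is,I,Is]} = 0$, and then to exhibit an isomorphism $F \co \Dem'' \simto \Dem$ (where $\Dem$ is identified with $\Dem'$ via Corollary~\ref{cor:DDequal}). Once this is established, the basis statement follows from Corollary~\ref{cor:DDequal}, and the associated graded statement reduces to comparing presentations.

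First I would verify that the stipulated relations hold in $\Dem$. Each braid relation in \eqref{braidrelns} equates two reduced expressions for the same double coset $p$, so Proposition~\ref{prop:demred} shows both sides evaluate to $\pa_p$. The nil-quadratic relation holds because $[Is, I, Is]$ is non-reduced---the coset it expresses admits the strictly shorter reduced expression $[Is]$---so again by Proposition~\ref{prop:demred} the left-hand side vanishes. This gives an identity-on-objects functor $F \co \Dem'' \to \Dem$; fullness is immediate, since $F$ hits every generator of $\Dem$.

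The crux is faithfulness. For each $(I,J)$-coset $p$ I would define $\pa_p^{\Dem''} := \pa_{I_\bullet}^{\Dem''}$ using any reduced expression $I_\bullet \expr p$; well-definedness follows from the singular Matsumoto theorem (\Cref{thm:matsumoto}), since any two reduced expressions are related by a sequence of braid moves, which are equalities in $\Dem''$. For a non-reduced expression $I_\bullet$, I would appeal to \Cref{thm:notreduced}: some sequence of braid moves (equalities in $\Dem''$) together with one-directional $*$-quadratic reductions will reduce $I_\bullet$. At the moment the first $*$-quadratic reduction is applied, the current expression contains a contiguous subword of the form $[L, L \setminus s, L]$; at that point the nil-quadratic relation in $\Dem''$ collapses the subword morphism, and hence the entire composition, to zero. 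Thus $\Hom_{\Dem''}(J, I)$ is spanned by $\{\pa_p^{\Dem''}\}$, and since $F$ sends these to the linearly independent set $\{\pa_p\}$ (Corollary~\ref{cor:delpindep}), $F$ is a bijection on morphism spaces.

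For the associated graded statement, one observes that the singular Coxeter monoid $\SC$ of \cite[Def.\ 1.22]{EKo} admits a parallel presentation by the same generators, with relations given by the braid relations \eqref{braidrelns} and the $*$-quadratic relation $[Is, I, Is] \expr [Is]$; this presentation encodes exactly \Cref{thm:matsumoto} together with \Cref{thm:notreduced}. Linearizing $\SC$ over $\Bbbk$ and filtering by the length function of \cite[\S 3.4]{EKo}, the braid relations survive in the associated graded because they relate expressions of equal length. The $*$-quadratic relation $[Is, I, Is] = [Is]$ has left-hand side in filtration degree $2$ and right-hand side in filtration degree $0$; modding out the lower filtration, this becomes $[Is, I, Is] = 0$ in $\gr^2$, matching the nil-quadratic relation. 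Hence $\gr \SC_\Bbbk$ and $\Dem''$ admit identical presentations, yielding the claimed isomorphism.

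The main obstacle is the non-reduced case in the faithfulness argument: one must leverage the one-directionality of the $*$-quadratic reduction in \Cref{thm:notreduced} to guarantee that a sequence of braid moves alone will suffice to expose the first $[L, L\setminus s, L]$ subword, at which point the nil-quadratic relation in $\Dem''$ can be invoked to kill the entire morphism, matching the vanishing in $\Dem$ predicted by Proposition~\ref{prop:demred}.
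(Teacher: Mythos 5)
Your proposal follows the same overall strategy as the paper's proof: introduce $\Dem''$ by the presentation, construct the full functor $\Dem'' \to \Dem$, and use the singular Matsumoto theorem, the theorem on non-reduced expressions, and the linear independence of $\{\pa_p\}$ to establish faithfulness. One small stylistic difference: the paper verifies the nil-quadratic relation directly (observing that $\pa^I_{Is} = \pa_{w_{Is}w_I^{-1}}$ starts with $\pa_s$ and hence kills $R^{Is} \subset R^s$), whereas you deduce it by noting $[Is,I,Is]$ is non-reduced and appealing to Proposition~\ref{prop:demred}; both are valid, and yours is economical since Proposition~\ref{prop:demred} is already in hand. Your handling of the non-reduced case in the faithfulness argument matches the paper.

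Two points on the associated graded part. First, a minor slip: the filtration on $\SC$ is by \emph{length} (of cosets), not by width of expressions, so the left-hand side of the $*$-quadratic relation sits in filtration degree $\ell(Is) - \ell(I)$, not $2$; the right-hand side is in degree $0$, and since $\ell(Is)-\ell(I) > 0$ the argument still goes through. Second, and more substantively, your concluding step ``identical presentations $\Rightarrow$ isomorphism'' glides past the issue the paper flags explicitly in \S3.3: taking associated graded of a filtered algebra does not a priori preserve a presentation (extra relations may appear in the associated graded). The paper's main proof sidesteps this by comparing composition rules directly -- in $\gr\SC$ the composition of (the images of) $p$ and $q$ is $p*q$ if lengths add and $0$ otherwise, matching \eqref{composedemazureforcosets} -- and then invoking the already-established basis $\{\pa''_p\}$ of $\Dem''$ to conclude full faithfulness. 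To close the gap in your version, you should observe that the map $\Dem'' \to \gr\SC_\Bbbk$ induced by your presentation comparison is surjective, and then invoke that both sides have bases indexed by the same set of double cosets (for $\Dem''$, this was established earlier in the proof) to conclude injectivity.
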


\begin{proof}
Let $\Dem''$ denote the category with this presentation. To show there is a full functor $\Dem'' \to \Dem'$ (sending generators to generators) we need to confirm that the relations hold in $\Dem' = \Dem$.

The nil-quadratic relation holds because  $\pa^I_{Is}$ is zero on the subring $R^{Is}$. This is because $\pa^I_{Is} = \pa_{w_{Is}} w_I^{-1}$ and $s \in \rightdes(w_{Is}w_I^{-1})$, so $\pa^I_{Is}$ begins with $\pa_s$. Meanwhile, $R^{Is}$ is contained in $R^s$ and is killed by $\pa_s$. 

The braid relations hold by Proposition~\ref{prop:demred}, since both sides of a braid relation are a reduced expression for the same double coset.

Let us define a morphism $\pa^{''}_p \in \Dem''$ by
\[ \pa^{''}_p := \pa^{''}_{I_{\bullet}} \]
for some arbitrarily chosen reduced expression $I_{\bullet} \expr p$ (where $\pa^{''}_{I_{\bullet}}$ is now the obvious composition of generators in $\Dem''$). By Theorem \ref{thm:matsumoto}, all such reduced expressions are related by the braid relations, so $\pa^{''}_p$ is well-defined in $\Dem''$. By Proposition \ref{prop:demred}, $\pa^{''}_p$ in $\Dem''$ is sent to $\pa_p$ in $\Dem$. By Corollary \ref{cor:delpindep}, $\{\pa^{''}_p\}$ is linearly independent in $\Dem''$ (since it is sent by a linear functor to a linearly independent set).

Meanwhile, if $I_{\bullet}$ is not reduced, then by Theorem \ref{thm:notreduced} we can apply a series of braid relations to get to an expression %$I_{\bullet}'$ 
containing $[Js,J,Js]$ inside for some $J$ and $s$. Applying the relations in $\Dem''$, we see that $\pa_{I_{\bullet}} = 0$. Thus every composition of the generators in $\Dem''$ is either $\pa^{''}_p$ for some $p$, or zero. Therefore $\{\pa^{''}_p\}$ is a spanning set for $\Dem''$.

We conclude that $\{\pa^{''}_p\}$ is a basis for morphism spaces in $\Dem''$, and therefore the functor to $\Dem'$ is fully faithful.

Just as the monoid $(W,*,S)$ was filtered by length, so too the category $\SC$ is filtered by length, see \cite[\S 3.4]{EKo}. Composition in the associated graded of (the linearization of) $\SC$ of the images of $p$ and $q$ would give $p * q$ if the lengths add, and $0$ otherwise. Thus the composition rule agrees with \eqref{composedemazureforcosets}. The natural functor from $\Dem''$ to the associated graded is therefore well-defined and evidently fully faithful.
\end{proof}

\begin{rem} 
The category $\SC$ has a presentation with the braid relations and the $*$-quadratic relation $[Is,I,Is] = [I]$, see \cite[Theorem 5.32]{EKo}. Taking the $\Bbbk$-linearization of $\SC$ and then taking its associated graded, the presentation of $\SC$ becomes the presentation of $\Dem''$.
\end{rem} 

\subsection{Aside on cohomology and \texorpdfstring{$K$}{K}-theory}
 
When $(W,S)$ is a Weyl group of a complex semisimple Lie group $G$ with Borel subgroup $B$, the operators $\pa_I$ have a geometric construction. We set $R = H_B^*$ to be the $B$-equivariant cohomology of a point.
Let $P_I$ denote the parabolic subgroup of $G$ corresponding to $I \subset S$. Then $R^I = H_{P_I}^*$ is the $P_I$-equivariant cohomology of a point. The map from the $P_I$-classifying space to the $B$-classifying space is a $P_I/B$ bundle, and $P_I/B$ is a smooth projective variety. Therefore, there is a proper pushforward map $H_B^* \to H_{P_I}^*$, and using relative Poincar\'{e} duality, this is a Frobenius extension. Indeed, the proper pushforward map agrees with the Demazure operator $\pa_I$.

It is common instead to consider the equivariant $K$-theory of a point. Let $R_K := K_B^*$ and $R_K^I := K_{P_I}^*$. The proper pushforward in $K$-theory is a different operator, commonly called a ($K$-theoretic) Demazure operator or a Lusztig-Demazure operator. This operator satisfies the $*$-quadratic relation $\pa_s \circ \pa_s = \pa_s$, rather than the nil-quadratic relation.

It is straightforward to adapt the arguments of this chapter to construct a category whose morphisms are built from $K$-theoretic Demazure operators instead of ordinary Demazure operators. This category will be equivalent to the linearization of $\SC$, rather than the associated graded of the linearization.

\subsection{Asides on other kinds of realizations} \label{subsec:nastier}

In this technical series of remarks, we address what is known and likely to happen when the realization is not faithful, or when it is unbalanced.

When the realization is not faithful, the operators $\{\pa_x\}$ need not be linearly independent, and additional relations are needed to present the nilCoxeter algebra. The map $\pa_I$ is often zero (and thus generalized Demazure surjectivity will fail), when the realization is not faithful upon restriction to $W_I$. There is a full but not necessarily faithful functor $\Dem'' \to \Dem$, by the same proof. It is $\Dem''$ which should be the associated graded of the singular Coxeter monoid.

When the realization is not balanced and $m_{st}$ is odd, the braid relations \eqref{dembraid} will involve an invertible scalar. For example, if $m_{st} = 3$ and $a_{st} = q$ and  $a_{ts} = q^{-1}$, then $\pa_s \pa_t \pa_s = -q \pa_t \pa_s \pa_t$. See \cite[Claim A.7]{Bendihedral} for details.

As a consequence, different reduced expressions for $x \in W$ will produce Demazure operators which agree up to rescaling. To define $\pa_x$, we fix a choice of reduced expression. Relations like \eqref{composeDem} are understood to only hold up to (computable) scalars.

The operators $\{\pa_x\}_{x \in W}$ still span the nilCoxeter algebra. The literature does not contain a proof (for faithful, unbalanced realizations) that they are linearly independent, although a proof along the lines of Demazure's proof in \cite[Corollaire 1]{Demazure} should suffice. Demazure expresses $\pa_x$ as a linear combination of operators $w \in W$ over the fraction field of $R$, and uses this to deduce linear independence. Given the state of the literature, let us say that the linear independence of Demazure operators is conjectural for faithful unbalanced representations.

Generalized Demazure surjectivity should continue to hold even for faithful unbalanced realizations. The proof of Lemma \ref{lem:productofrootsworks} will not work, as positive roots are not well-defined. However, there is a distinguished collection of lines spanned by roots. One should replace the product of all positive roots with the product of one root from each root line, an element which is well-defined up to invertible scalar. With this modification the proof of Lemma \ref{lem:productofrootsworks} should work, though this is only conjectural.

It is unknown if there is a variant on the Coxeter $*$-monoid whose associated graded is the nilCoxeter algebra for an unbalanced realization.

Consider an unbalanced realization, with trace maps $\pa^I_J$ defined so that \eqref{compatible} holds. One can define $\Dem$ and $\Dem'$ in the same way as above. If $I_{\bullet} \expr p$, then $\pa_{I_{\bullet}}$ and $\pa_p$ will agree up to scalar. Consequently Corollary \ref{cor:DDequal} will still hold. The nil-quadratic relation will hold in $\Dem$, as will the up-up and down-down braid relations, but the switchback relation will only hold up to a scalar. Defining $\Dem''$ with the appropriate scalars built into the switchback relations, the proof of Theorem \ref{thm:dempresent} holds mutatis mutandis except the proof that $\Dem'' \to \Dem'$ is faithful, which relies on the conjecture above that $\{\pa_p\}$ is linearly independent.

Finally, for $I \subset J \subset K$ in the balanced case, one has
\begin{equation} \label{eq:compatibleearly} \pa^I_K = \pa^J_K \circ \pa^I_J. \end{equation}
In the unbalanced setting, this may only hold up to scalar, as currently stated. However, \eqref{eq:compatibleearly} is a desirable feature for chains of Frobenius extensions. Consequently, one should not use \eqref{paJIdefn}, but should redefine $\pa^I_J$ as some scalar multiple of $\pa_{w_J w_I^{-1}}$, in order that \eqref{eq:compatibleearly} holds. For an example of this sort of bookkeeping, see \cite[\S 3.2]{EQuantumI}.

\begin{rem} See \cite{EWLocalized} for further details on how the failure of faithfulness or balancedness affects the Hecke category. \end{rem}

\section{New properties of some Frobenius extensions} \label{sec:frob}

\subsection{Demazure operators and Frobenius extensions} \label{subsec:frobextdemazure}

By a famous theorem of Chevalley and Shephard--Todd \cite{ShephardTodd}, if $I \subset S$ is finitary, then $R$ is a free graded module over $R^I$ of finite rank, and $R^I$ is itself a polynomial ring. An upgraded version of this theorem was proven by Demazure \cite{Demazure}, who gave an explicit description of the basis of $R$ over $R^I$ using Demazure operators. %demonstrated that $R^I \subset R$ is a Frobenius extension.
%\LP{Before it said that Demazure proved that this is Frobenius. This seems a bit of a stretch, does not seem to consider Frobenius extension. Reworded.}

\begin{defn}\label{defn:frobext}  An inclusion $A \subset B$ of graded commutative rings is called a \emph{graded Frobenius extension of degree $\ell$} if it comes equipped with an \emph{nondegenerate} $A$-linear map $\pa^B_A \colon B \to A$ called the \emph{(Frobenius) trace}, which is homogeneous of degree $-2\ell$. Here, $\pa^B_A$ is called nondegenerate if there exist homogeneous finite bases $\{c_i\}$ and $\{d_i\}$ of $B$ as a free $A$-module such that \begin{equation} \label{tracenondeg} \pa^B_A(c_i d_j) = \delta_{ij}.\end{equation}
\end{defn}

Note that $\pa^B_A$ must be a surjective map $B \to A$: by $A$-linearity, the image is an $A$-submodule of $A$, and by \eqref{tracenondeg}, this image contains $1$.

The following theorem is a direct consequence of the results in \cite{Demazure}. An English-language exposition can be found in \cite[Th 24.36]{GBM}. We also re-explain the proof in the next section, because we will adapt it to prove new results.

\begin{thm}\label{thm.FrobI}  If $I \subset S$ is finitary and $\pa_I \colon R \to R^I$ is surjective, then $R^I \subset R$ is a graded Frobenius extension of degree $\ell(I)$, with trace map $\pa_I$.
\end{thm}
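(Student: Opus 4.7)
The statement bundles three claims: that $\pa_I$ is homogeneous of degree $-2\ell(I)$; that $R$ is free of finite rank over $R^I$; and that there exist homogeneous dual bases $\{c_x\}_{x \in W_I}$, $\{d_y\}_{y \in W_I}$ of $R$ over $R^I$ with $\pa_I(c_x d_y) = \delta_{xy}$ as in \eqref{tracenondeg}. The first is immediate: $\pa_I = \pa_{w_I}$ is a composition of $\ell(w_I) = \ell(I)$ simple Demazure operators, each of degree $-2$. The second is the Chevalley--Shephard--Todd theorem recalled just before the statement; comparing Hilbert series pins the $R^I$-rank of $R$ at $|W_I|$. So the content is producing the dual bases.

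I would follow Demazure's original construction. Using the surjectivity hypothesis, fix a homogeneous $\mu \in R$ of degree $2\ell(I)$ with $\pa_I(\mu) = 1$. For each $x \in W_I$, set
\[ c_x := \pa_{w_I x^{-1}}(\mu) \in R, \]
homogeneous of degree $2\ell(x)$, since $\ell(w_I x^{-1}) = \ell(I) - \ell(x)$. Applying the composition rule \eqref{composeDem} to the reduced product $x.(w_I x^{-1}) = w_I$ yields $\pa_x(c_x) = \pa_{w_I}(\mu) = 1$; more generally $\pa_y(c_x) = \pa_{y w_I x^{-1}}(\mu)$ when the lengths add, and is $0$ otherwise. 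Matching homogeneous degrees forces the value to be the scalar $1$ precisely when $y = w_I x w_I$.

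To promote this data into Frobenius dual bases, I would pair $\{c_x\}$ against a second, ``Schubert-style'' family $\{X_y\}_{y \in W_I}$ of homogeneous elements with $X_y$ of degree $2\ell(y)$, satisfying an iterative Leibniz relation under each $\pa_s$. Expanding $\pa_I(c_x X_y)$ via the Leibniz rule for $\pa_{w_I}$, and repeatedly invoking Lemma \ref{lem:imageDem} to kill off terms, one checks that the matrix $M_{xy} := \pa_I(c_x X_y) \in R^I$ is unitriangular in the Bruhat order (up to the involution $y \mapsto w_I y w_I$). Invertibility of $M$ over $R^I$ then produces $d_y := \sum_x (M^{-1})_{yx} X_x$ as the dual basis; matching the cardinality $|W_I|$ against the rank from Chevalley--Shephard--Todd, both freeness with basis $\{c_x\}$ and nondegeneracy of $\pa_I$ fall out simultaneously.

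The main obstacle is establishing the triangularity of $M$. This is the genuinely combinatorial core of Demazure's argument (see \cite[\S 4]{Demazure}, expounded in \cite[Ch.~24]{GBM}), and everything else is either degree bookkeeping or standard linear algebra over $R^I$. The only new wrinkle compared with the full-flag case $I = S$ is the twist $y \mapsto w_I y w_I$ coming from \eqref{composeDem}, which is harmless once one relabels the dual basis.
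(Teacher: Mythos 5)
Your plan correctly identifies the setup --- fixing $\mu$ of degree $2\ell(I)$ with $\pa_I(\mu) = 1$ and forming the Demazure family $c_x := \pa_{w_I x^{-1}}(\mu)$, which after the reindexing $y = w_I x^{-1}$ is just $\{\pa_y(\mu)\}_{y \in W_I}$ --- but it leaves the heart of the argument undone. You introduce an unspecified ``Schubert-style'' family $\{X_y\}$ and assert, without constructing it or arguing anything about it, that the pairing matrix $M_{xy} = \pa_I(c_x X_y)$ is unitriangular in Bruhat order, flagging that step as the ``genuinely combinatorial core'' and deferring to Demazure and \cite[Ch.~24]{GBM}. That is exactly where the proof has to happen: with $\{X_y\}$ undefined and the triangularity unproved, what you have is a pointer to the literature rather than a proof.

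The paper takes a different route that avoids the auxiliary family entirely: it pairs the Demazure family against \emph{itself}. The tool that makes this computable is the adjointness identity $\pa_x\bigl(\pa_s(f)\,g\bigr) = \pa_x\bigl(f\,\pa_s(g)\bigr)$ for $s \in \rightdes(x)$ (Equation~\eqref{eq:canshiftsover}); applying it iteratively along a reduced word for $y$ gives $\pa_I\bigl(\pa_y(P_I)\,\pa_z(P_I)\bigr) = \pa_I\bigl(P_I\,\pa_{y^{-1}}\pa_z(P_I)\bigr)$, and degree bookkeeping shows this equals $\delta_{y^{-1}.z = w_I}$ modulo the graded maximal ideal $\mg_{R^I}$. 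So $\{\pa_y(P_I)\}$ is an \emph{almost} dual basis to itself in the sense of \Cref{def:gradedfrobext}, and the Gram--Schmidt step of \Cref{lem:ADBgivesDB} upgrades that to an honest dual basis --- no Schubert polynomials, no Bruhat-order triangularity. Both approaches still lean on Demazure's freeness result that $\{\pa_y(P_I)\}_{y \in W_I}$ is an $R^I$-basis of $R$ (\Cref{prop:bases}), so that input is shared; the genuine divergence is in how nondegeneracy is established. The idea your proposal misses is that the one family you had already built can serve as both halves of the dual pair, with the swap identity replacing the triangularity argument you were deferring.
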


A more general version of this theorem is the following. Recall the definition of $\pa^I_J$ from \eqref{paJIdefn}.

\begin{thm}[{\cite[Th 24.40]{GBM} \label{thm:FrobExtIJ}}] If $I \subset J \subset S$ are both finitary and generalized Demazure surjectivity holds, then $R^J \subset R^I$ is graded Frobenius of degree $\ell(J) - \ell(I)$, with trace map $\pa^I_J$.
\end{thm}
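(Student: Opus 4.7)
My strategy is to deduce Theorem \ref{thm:FrobExtIJ} from Theorem \ref{thm.FrobI} applied to both $I$ and $J$ (both available thanks to generalized Demazure surjectivity), bridged by the identity $\pa_J = \pa^I_J \circ \pa_I$ coming from the reduced composition $w_J = (w_J w_I^{-1}).w_I$ via \eqref{composeDem}.

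First I would verify that $R^I$ is a graded free $R^J$-module of rank $|W_J|/|W_I|$. Chevalley--Shephard--Todd gives $R$ free of rank $|W_I|$ over $R^I$ and of rank $|W_J|$ over $R^J$; since $R^I$ splits off from $R$ as an $R^J$-module summand (e.g.\ via a Reynolds-type or Demazure-style projector), it is a graded projective, hence graded free, $R^J$-module of the stated rank. Next, surjectivity of $\pa^I_J \co R^I \to R^J$ is automatic: from $\pa_J = \pa^I_J \circ \pa_I$ and generalized Demazure surjectivity, $R^J = \pa_J(R) = \pa^I_J(\pa_I(R)) \subset \pa^I_J(R^I)$.

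The main content is nondegeneracy of the pairing $(f,g) \mapsto \pa^I_J(fg)$, equivalently that the graded $R^J$-linear map
\[ \phi \co R^I \longrightarrow \Hom_{R^J}(R^I,R^J), \qquad \phi(f)(g) := \pa^I_J(fg), \]
of degree $-2(\ell(J)-\ell(I))$, is an isomorphism. For injectivity, if $\phi(f) = 0$ then for every $h \in R$,
\[ \pa_J(fh) = \pa^I_J(\pa_I(fh)) = \pa^I_J\bigl(f \cdot \pa_I(h)\bigr) = \phi(f)\bigl(\pa_I(h)\bigr) = 0, \]
using $R^I$-linearity of $\pa_I$ together with $f \in R^I$; nondegeneracy of $\pa_J$ (Theorem \ref{thm.FrobI}) then forces $f=0$. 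For surjectivity, both sides of $\phi$ are graded free $R^J$-modules of the same rank $|W_J|/|W_I|$, and the degrees of a homogeneous basis of $R^I$ over $R^J$ are given by the parabolic Poincar\'{e} polynomial $\sum_{w \in W_J^I} t^{2\ell(w)}$ (with $W_J^I$ the minimal length representatives of $W_I \backslash W_J$), which is palindromic around $\ell(J)-\ell(I)$. This matches the Hilbert series of $\Hom_{R^J}(R^I,R^J)$ up to exactly the shift $-2(\ell(J)-\ell(I))$. A graded injection between graded free $R^J$-modules with equal Hilbert series is an isomorphism, checked degree-by-degree on finite-dimensional $\Bbbk$-vector spaces. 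Any homogeneous $R^J$-basis $\{c_i\}$ of $R^I$ then has a $\pa^I_J$-dual basis $\{d_i\} := \{\phi^{-1}(c_i^*)\} \subset R^I$, providing the required Frobenius structure of degree $\ell(J)-\ell(I)$.

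The main obstacle is the surjectivity half of $\phi$, which hinges on the classical palindromicity of the parabolic Poincar\'{e} polynomial; this is standard (the longest minimal coset representative has length exactly $\ell(J)-\ell(I)$), but must be cited carefully. An alternative route, closer to Demazure's original argument, would be to construct the dual basis explicitly using divided differences applied to a product of positive roots for $W_J^I$, avoiding the Hilbert series detour at the cost of more bookkeeping.
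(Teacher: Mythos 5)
Your proof is correct, but it follows a genuinely different route from the paper's. The paper proves Theorem~\ref{thm:FrobExtIJ} by explicitly constructing almost dual bases
\[
\{\pa_I \pa_y(P_J)\}_{y \in W_I \backslash W_J} \quad\text{and}\quad \{\pa_I \pa_{y^\circ}(P_J)\}_{y \in W_I \backslash W_J},
\]
where $P_J$ satisfies $\pa_J(P_J)=1$: using the Leibniz identity \eqref{eq:canshiftsover} it shows $\pa^I_J(\pa_I\pa_y(P_J)\,\pa_I\pa_z(P_J)) \equiv \delta_{y^{-1}.w_I.z = w_J} \pmod{\mg_{R^J}}$, invokes Proposition~\ref{prop:bases} to know the rank, applies Lemma~\ref{lem:eqgivesADB} to conclude these are almost dual bases, and finishes with the Gram--Schmidt step of Lemma~\ref{lem:ADBgivesDB}. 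You instead argue abstractly: injectivity of $\phi\colon R^I \to \Hom_{R^J}(R^I,R^J)$ is deduced from the already-established nondegeneracy of $\pa_J$ via the factorization $\pa_J = \pa^I_J\circ\pa_I$ and $R^I$-linearity of $\pa_I$, and surjectivity of $\phi$ is deduced from a Hilbert series comparison resting on palindromicity of the parabolic Poincar\'e polynomial $\sum_{w\in W^I_J} t^{2\ell(w)}$. The ``alternative route'' you mention at the end is essentially the paper's actual proof.

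Both arguments are sound, but they buy different things. Your approach is cleaner and avoids the Leibniz-rule bookkeeping, at the cost of the palindromicity input (standard, but an external ingredient). It also silently restricts to $\Bbbk$ a field: the degree-by-degree ``equal finite dimension plus injection implies isomorphism'' step uses that each graded piece is a finite-dimensional vector space, whereas the paper's footnote allows $\Bbbk$ a domain, and an injection of finite free modules of equal rank over a domain need not be onto. More importantly for the paper's purposes, the explicit almost dual bases $\{\pa_I\pa_y(P_J)\}$ are not incidental: they are reused verbatim in Lemma~\ref{lem:almostdualexpl} and Theorem~\ref{thm:almostdualinimage}, which are the new contributions of Section~\ref{sec:frob}. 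Your abstract argument establishes that \emph{some} dual bases exist but gives no control over where they live, so it could not be substituted for the paper's proof without redoing the explicit construction elsewhere.

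One small point worth tightening: your claim that $R^I$ splits off $R$ as an $R^J$-module can be made concrete in the spirit of the paper --- $\pa_I\colon R\to R^I$ is $R^I$-linear and has the section $f\mapsto P_I f$, since $\pa_I(P_I f) = f\,\pa_I(P_I) = f$ for $f\in R^I$; this works in all characteristics, unlike a Reynolds operator.
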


 We will be recalling the techniques used to prove Theorem \ref{thm:FrobExtIJ}, because we need similar techniques to prove our new results. We begin by justifying that $\pa^J_I$ has the correct codomain, since a priori, $\pa^J_I$ is just an endomorphism of $R$. This is a consequence of Lemma~\ref{lem:sendsJtoI}, with the same proof, but we rewrite it in this simpler setting.

\begin{lem} \label{lem:paIJ} The map $\pa^I_J$ sends $R^I$ to $R^J$. Moreover, $\pa^I_J \colon R^I \to R^J$ is surjective. \end{lem}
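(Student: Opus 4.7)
The plan is to mirror the proof of Lemma \ref{lem:sendsJtoI} in this simpler setting, obtaining both assertions from a single length-additive factorization. The key observation is that the hypothesis $I \subset J$ gives the reduced composition
\[ w_J = (w_J w_I^{-1}) . w_I, \]
i.e.\ $\ell(w_J w_I^{-1}) + \ell(w_I) = \ell(w_J)$. This is standard: $w_I \in W_J$ and $w_J$ is the longest element of $W_J$, so Bruhat-theoretically $w_I \le w_J$ and $w_J w_I^{-1}$ is the other factor. By the composition rule \eqref{composeDem}, this translates into the operator identity $\pa_J = \pa^I_J \circ \pa_I$ on $R$.

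With this in hand, I would package both claims into one chain of equalities. Generalized Demazure surjectivity for $I$ gives $\im \pa_I = R^I$, so restricting $\pa^I_J$ to $R^I$ yields
\[ \pa^I_J(R^I) \;=\; \pa^I_J(\im \pa_I) \;=\; \im(\pa^I_J \circ \pa_I) \;=\; \im \pa_J \;=\; R^J, \]
where the last equality is generalized Demazure surjectivity for $J$, and the containment $\im \pa_J \subset R^J$ is Lemma \ref{lem:imageDem}(3) applied to $x = w_J$ (whose left descent set is all of $J$). The inclusion $\pa^I_J(R^I) \subset R^J$ is the first assertion, and the equality $\pa^I_J(R^I) = R^J$ is the second.

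There is no serious obstacle here: the only nontrivial input is the length-additivity, which is a standard property of the longest element of a finite parabolic subgroup, and everything else is a direct application of generalized Demazure surjectivity together with Lemma \ref{lem:imageDem}. Indeed, by Example \ref{ex:3.10} this lemma is precisely the special case of Lemma \ref{lem:sendsJtoI} applied to the minimal $(J,I)$-coset, for which $\ma{p} = w_J$ and $\pa_p = \pa^I_J$, but stated in a form that emphasizes surjectivity as well as the codomain.
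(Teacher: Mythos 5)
Your proposal is correct and takes essentially the same approach as the paper: both rely on the length-additive factorization $w_J = (w_J w_I^{-1}).w_I$, the composition rule \eqref{composeDem} giving $\pa_J = \pa^I_J \circ \pa_I$, Lemma~\ref{lem:imageDem} for the codomain, and generalized Demazure surjectivity for both containment and surjectivity. You merely package the two assertions into a single chain of image equalities rather than arguing element-wise, and you make explicit the connection to Lemma~\ref{lem:sendsJtoI} via Example~\ref{ex:3.10}, which the paper itself already remarks upon.
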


\begin{proof} Because $\pa_I: R \to R^I$ is surjective, any element of $R^I$ has the form $\pa_I(f)$ for some $f \in R$. By \eqref{composeDem} we have $\pa^I_J(\pa_I(f)) = \pa_J(f)$, which lives in $R^J$ by Lemma~\ref{lem:imageDem}.

For any $g \in R^J$ there is some $f \in R$ such that $\pa_J(f) = g$. Then $g = \pa^I_J(\pa_I(f))$, so $g$ is in the image of $\pa^I_J$. \end{proof}

Note that
\begin{equation*} \label{compatible} \pa^I_K =\pa^J_K \circ  \pa^I_J \end{equation*}
whenever $I \subset J \subset K$.

\subsection{Almost dual bases}

We recall the proof of Theorem \ref{thm:FrobExtIJ}, following the exposition of \cite[Sec. 24.3]{GBM}.

Recall from Definition~\ref{defn:frobext} that a Frobenius extension (with trace map $\pa$) necessitates the existence of a pair of dual bases $\{c_i\}$ and $\{d_i\}$ for which $\pa(c_i d_j) = \delta_{ij}$. How would one construct dual bases for the extensions $R^I \subset R$, or the extensions $R^J \subset R^I$ when $I \subset J \subset S$? Why is the assumption of generalized Demazure surjectivity enough to imply that dual bases exist?

There is extensive literature on dual bases for the Frobenius extension $R^S \subset R$ when $R = \R[x_1, \ldots, x_n]$ and $W = S_n$. Outside of this case (and even for the geometric realization in type $A$, which is spanned by roots), there is no explicit construction of dual bases in the literature to our knowledge. It remains an important open problem to find a combinatorial construction of dual bases!

However, what is easier to find is an explicit construction for almost dual bases.

\begin{defn}\label{def:gradedfrobext}

Let $A\subset B$ be a graded extension of positively graded algebras over a field $\Bbbk$, for which $B$ is free of finite rank over $A$.  Assume that both are spanned by the identity element in degree zero. Let $\mg_A$ be the graded Jacobson radical of $A$, which is the ideal spanned by all elements of strictly positive degree.

Let $\pa_A^B:B\to A$ be an $A$-linear map of degree $-2\ell$. Finite homogeneous bases $\{c_i\}$ and $\{d_i\}$ (bases of $B$ as an $A$-module) are called \emph{almost dual bases} if
\begin{equation}\label{eq:almostdual} \pa_A^B(c_i d_j) \equiv \delta_{ij} + \mg_A. \end{equation}
\end{defn}

\begin{lem} \label{lem:eqgivesADB} Let $A$ and $B$ as in \Cref{def:gradedfrobext}.
Suppose that $\{c_i\}_{i=1}^n$ and $\{d_i\}_{i=1}^n$ are subsets of homogeneous elements of 
$B$ which satisfy \eqref{eq:almostdual} and suppose $B$ is free over $A$ of rank $n$. Then $\{c_i\}$ and $\{d_i\}$  are almost dual bases.
\end{lem}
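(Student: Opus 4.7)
The condition \eqref{eq:almostdual} is built into the definition of almost dual bases, so what actually needs proof is that $\{c_i\}$ and $\{d_i\}$ are $A$-bases of $B$. My plan is to reduce modulo the graded Jacobson radical $\mg_A$ and invoke graded Nakayama.

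First I would show that the images $\bar c_1,\ldots,\bar c_n$ form a $\Bbbk$-basis of $B/\mg_A B$. Since $B$ is free of rank $n$ over $A$ and $A_0=\Bbbk$, the quotient $B/\mg_A B$ is a graded $\Bbbk$-vector space of total dimension $n$, so $\Bbbk$-linear independence suffices. Suppose $\sum_i \lambda_i c_i \in \mg_A B$ with $\lambda_i \in \Bbbk$. Multiplying by $d_j$ and applying the $A$-linear map $\pa^B_A$ (which sends $\mg_A B$ into $\mg_A$) yields, using \eqref{eq:almostdual},
\[ \mg_A \ \ni\ \sum_i \lambda_i\, \pa^B_A(c_i d_j) \ =\ \lambda_j + \sum_i \lambda_i m_{ij}, \qquad m_{ij}\in\mg_A. \]
The rightmost sum already lies in $\mg_A$, forcing $\lambda_j \in \mg_A \cap \Bbbk = 0$. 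An entirely symmetric computation (starting from $\sum_i \mu_i d_i \in \mg_A B$ and multiplying by $c_k$) shows that $\{\bar d_i\}$ is also a $\Bbbk$-basis of $B/\mg_A B$.

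Finally, by graded Nakayama, lifts of a $\Bbbk$-basis of $B/\mg_A B$ generate $B$ as an $A$-module. To promote ``generating set of size $n$'' to ``$A$-basis'' in a free module of rank $n$, I would compare graded ranks: fixing any decomposition $B \cong \bigoplus_j A(-k_j)$, the fact that the homogeneous elements $\{\bar c_i\}$ form a $\Bbbk$-basis of $B/\mg_A B$ forces the multisets $\{\deg c_i\}$ and $\{k_j\}$ to coincide. The resulting graded $A$-module surjection $\bigoplus_i A(-\deg c_i) \twoheadrightarrow B$ is therefore a surjection between graded free modules with identical Hilbert series, hence a degree-wise isomorphism between equidimensional $\Bbbk$-vector spaces in each degree. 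The identical argument applies to $\{d_i\}$. I do not anticipate a serious obstacle; the only mildly delicate point is the grading bookkeeping needed to upgrade ``generating set of the correct size'' into ``basis'', and the key conceptual input is the interaction between \eqref{eq:almostdual} and the $A$-linearity of $\pa^B_A$ in the displayed computation.
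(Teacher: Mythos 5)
Your argument is correct, but it takes a genuinely different route from the paper's. The paper orders the $c_i$ by ascending degree and observes, by a degree count, that the Gram-type matrix $\bigl(\pa^B_A(c_id_j)\bigr)$ is lower unitriangular: entries above the diagonal vanish outright (negative degree), diagonal entries equal $1$ exactly, and entries below the diagonal lie in $\mg_A$. Writing $c_i=\sum\gamma_{ij}x_j$ and $d_i=\sum\delta_{ij}x_j$ against a fixed basis $\{x_i\}$, this says $\gamma X\delta^t$ is unitriangular, whence $\det\gamma$ and $\det\delta$ are units and $\{c_i\}$, $\{d_i\}$ are bases. This is a short determinant argument entirely inside $A$-linear algebra. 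You instead reduce modulo $\mg_A$, use the pairing to show $\{\bar c_i\}$ (and symmetrically $\{\bar d_i\}$) are $\Bbbk$-linearly independent in $B/\mg_AB$, and then upgrade via graded Nakayama plus a graded-rank (Hilbert series) count. Both are sound. Your approach buys conceptual clarity — it isolates the fact that everything is controlled by the fiber at the graded maximal ideal, which is the same principle behind Lemma~\ref{lem:ADBgivesDB} — while the paper's determinant proof is more self-contained and avoids any appeal to Nakayama or Hilbert series (the latter requiring, strictly speaking, finite-dimensional graded pieces, which holds in all the intended applications but is not literally spelled out in Definition~\ref{def:gradedfrobext}; if you wanted to avoid that, you could replace the Hilbert-series step by the observation that matching degree multisets give an abstract graded isomorphism $\bigoplus_i A(-\deg c_i)\cong B$, and a graded surjective endomorphism of a finitely generated module over a commutative ring is automatically injective).
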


\begin{proof}
It is enough to show that $\{c_i\}$ and $\{d_i\}$ are $A$-bases of $B$. We index the element of  $\{c_i\}_{i=1}^n$  in ascending order of their degrees, i.e.,  such that $\deg(c_i)\leq \deg(c_j)$ for $i<j$. We have $\deg(d_i)=2\ell-\deg(c_i)$ and, by degree reasons, we observe that \begin{equation}\label{realmeaning}\pa_A^B(c_id_j)=\begin{cases}0&\text{ if }i<j\\
1& \text{ if }i=j\\
f\in \mg_A&\text{ if }i>j
\end{cases}.\end{equation}

By assumption there exists a basis $\{x_i\}_{i=1}^n$ of $B$ over $A$. We can write $c_i=\sum \gamma_{ij}x_j$ and $d_i=\sum \delta_{ij}x_j$, where $\gamma:=(\gamma_{ij})$ and $\delta:=(\delta_{ij})$ are $n\times n$-matrices with coefficients in $A$. 
Then \eqref{realmeaning} says that the product $\gamma X \delta^{t}$ is a lower unitriangular matrix, where $X$ is the $n\times n$ matrix with $(i,j)$-entries $\pa^B_A(x_ix_j)$. In particular, both $\gamma$ and $\delta^t$ are invertible. It follows that $\{c_i\}$ and $\{d_i\}$ are bases.
\end{proof}

\begin{lem} \label{lem:ADBgivesDB} Suppose that $\{c_i\}$ and $\{d_i\}$ are almost dual bases of $B$ over $A$. Then $\{c_i\}$ and $\{d'_i\}$ are dual bases, for some $\{d'_i\}$. Moreover, $d_i \equiv d'_i$ modulo the ideal in $B$ generated by $\mg_A$. In particular, the extension $A\subset B$ is graded Frobenius.
\end{lem}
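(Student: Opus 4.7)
The approach is to form the matrix $M$ of pairings and show that, by a degree-ordering argument identical in spirit to the one used in the previous lemma, $M$ is lower unitriangular over $A$, hence invertible. A change of basis via $M^{-1}$ will then convert the almost dual bases into genuine dual bases, and the triangular structure of $M^{-1}$ will ensure that the change is trivial modulo $\mg_A B$.

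First I would order the $c_i$ in ascending degree and set $M_{ij} := \pa^B_A(c_i d_j) \in A$. Since $\pa^B_A$ has degree $-2\ell$ and $\deg d_j = 2\ell - \deg c_j$, the entry $M_{ij}$ is homogeneous of degree $\deg c_i - \deg c_j$. Arguing exactly as in \eqref{realmeaning}, the fact that $A$ is positively graded with $A_0 = \Bbbk$ forces $M_{ij} = 0$ for $i < j$, $M_{ii} = 1$ for all $i$ (it is a scalar congruent to $1$ modulo $\mg_A$), and $M_{ij} \in \mg_A$ for $i > j$. Thus $M$ is lower unitriangular, and in particular invertible over $A$ with $M^{-1}$ also lower unitriangular and entries of $M^{-1} - \Id$ lying in $\mg_A$ (the entry $(M^{-1})_{kj}$ is homogeneous of degree $\deg c_k - \deg c_j$).

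Next I would define
\begin{equation}
d'_j := \sum_k d_k (M^{-1})_{kj}.
\end{equation}
By construction each $d'_j$ is homogeneous of degree $\deg d_j$, and
\begin{equation}
\pa^B_A(c_i d'_j) = \sum_k M_{ik}(M^{-1})_{kj} = \delta_{ij}.
\end{equation}
Since $\{d_k\}$ is a basis of $B$ over $A$ and $M^{-1}$ is invertible over $A$, the family $\{d'_j\}$ is again a basis of $B$ over $A$. Triangularity of $M^{-1}$ gives $d'_j = d_j + \sum_{k>j} d_k (M^{-1})_{kj}$ with $(M^{-1})_{kj} \in \mg_A$, so $d'_j - d_j$ lies in the ideal of $B$ generated by $\mg_A$, as required. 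Finally, the existence of the pair $(\{c_i\},\{d'_j\})$ satisfying $\pa^B_A(c_i d'_j) = \delta_{ij}$ is literally the defining condition of a graded Frobenius extension in \Cref{defn:frobext}, so $A \subset B$ is graded Frobenius.

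The only real obstacle is the invertibility of $M$; once the degree bookkeeping is in place this is immediate from lower unitriangularity, and the rest is formal. The assumption $A_0 = \Bbbk$ (built into \Cref{def:gradedfrobext}) is what pins the diagonal entries of $M$ to the scalar $1$, and this is the feature that distinguishes almost dual bases from a wholly useless perturbation.
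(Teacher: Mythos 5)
Your proof is correct and is essentially the paper's argument in expanded form: the paper invokes the Gram--Schmidt algorithm on the degree-ordered basis (i.e.\ $d'_n = d_n$, $d'_{n-1} = d_{n-1} - \pa^B_A(c_n d_{n-1})d_n$, and so on), which is exactly the inversion of the lower unitriangular pairing matrix $M$ that you carry out explicitly. Your version is more detailed than the paper's one-line sketch --- in particular the degree bookkeeping showing $(M^{-1})_{kj}$ is homogeneous of degree $\deg c_k - \deg c_j$, so that $d'_j$ stays homogeneous --- but the underlying mechanism is identical.
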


\begin{proof}
This follows by a straightforward application of the Gram--Schmidt algorithm. We can fix an indexing of the basis $\{c_i\}_{i=1}^n$ such that $\deg(c_i)\leq \deg(c_j)$ if $i<j$ and then set $d_n'=d_n$, $d_{n-1}'=d_{n-1} -\pa_A^B(c_nd_{n-1})d_n$, %$d_{n-2}'=d_{n-2} -\pa_A^B(c_{n-1}d_{n-2})d_{n-1}-\pa_A^B(c_{n}d_{n-2})d_n-\pa_A^B(c_nd_{n_1})\pa_A^B(c_{n-1}d_{n-2})d_n$ 
and so on. 
\end{proof}

\begin{rem} Suppose $S$ is finitary. Let $(R^S_+)$ denote the ideal in $R$ generated by all elements of $R^S$ in positive degree, and let $C = R / (R^S_+)$ be the so-called \emph{coinvariant ring}. Then $\Bbbk \subset C$ is a Frobenius extension, with trace map $C \to \Bbbk$ induced by $\pa_S$. Almost dual bases for $R^S \subset R$ descend to actual dual bases for $\Bbbk \subset C$. \end{rem}

Here is Demazure's construction of almost dual bases. First a quick lemma.

\begin{lem} Suppose that $x \in W$ and $s \in \rightdes(x)$. For any $f, g \in R$ we have \begin{equation} \label{eq:canshiftsover} \pa_x(\pa_s(f)g) = \pa_x(f \pa_s(g)).\end{equation}
\end{lem}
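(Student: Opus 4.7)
The plan is to reduce the statement to a single-step identity for $\pa_s$ and then verify that identity via the twisted Leibniz rule.

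First, since $s \in \rightdes(x)$, by definition $xs < x$, so there exists $x' \in W$ with $x = x' . s$ and $\ell(x') = \ell(x) - 1$. By \eqref{composeDem} this gives the factorization $\pa_x = \pa_{x'} \circ \pa_s$. Consequently, both sides of the claimed identity equal $\pa_{x'}$ applied to $\pa_s(\pa_s(f) g)$ and $\pa_s(f \pa_s(g))$ respectively, and it is enough to prove
\[ \pa_s(\pa_s(f) g) = \pa_s(f \pa_s(g)) \qquad \text{for all } f,g \in R. \]

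Next I would invoke the standard twisted Leibniz rule
\[ \pa_s(ab) = \pa_s(a)\, b + s(a)\, \pa_s(b), \]
which follows directly from the definition $\pa_s(h) = (h - sh)/\al_s$. Expanding the left-hand side:
\[ \pa_s(\pa_s(f) g) = \pa_s(\pa_s(f))\, g + s(\pa_s(f))\, \pa_s(g). \]
Since $\pa_s(f) \in R^s$ (the image of $\pa_s$ lies in $R^s$, as noted just before Lemma~\ref{lem:imageDem}), we have $s(\pa_s(f)) = \pa_s(f)$, and by the nil-quadratic relation \eqref{nilquad} the first term vanishes, leaving $\pa_s(f)\,\pa_s(g)$. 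An identical expansion of the right-hand side gives
\[ \pa_s(f\, \pa_s(g)) = \pa_s(f)\, \pa_s(g) + s(f)\, \pa_s(\pa_s(g)) = \pa_s(f)\, \pa_s(g), \]
and the two agree.

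There is really no major obstacle here; the lemma is essentially a two-line consequence of the Leibniz rule plus $\pa_s^2 = 0$ and $\im \pa_s \subset R^s$. The only subtlety worth flagging is making sure we are using $s \in \rightdes(x)$ rather than $s \in \leftdes(x)$: it is the right descent that lets us peel $\pa_s$ off on the \emph{right} of $\pa_x$, which is exactly what is needed so that the identity $\pa_s(\pa_s(f) g) = \pa_s(f\pa_s(g))$ is what must be checked.
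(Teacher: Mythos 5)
Your proof is correct. It is essentially the same approach as the paper's, organized slightly differently: the paper applies $\pa_x$ to the vanishing quantity $\pa_s(fg)$ and uses the anti-invariance $\pa_x(s(h)) = -\pa_x(h)$ to split the difference, while you first factor $\pa_x = \pa_{x'}\pa_s$, reduce to the single-step identity $\pa_s(\pa_s(f)g) = \pa_s(f\pa_s(g))$, and show both sides equal $\pa_s(f)\pa_s(g)$ directly via the Leibniz rule, $\pa_s^2 = 0$, and $s$-invariance of $\im\pa_s$. The two arguments use the same ingredients (Leibniz, nil-quadratic, and the right-descent hypothesis to peel off $\pa_s$) and differ only in bookkeeping.
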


\begin{proof} The two key ingredients to this proof are the Leibniz rule
\begin{equation*} \pa_s(fg) = \pa_s(f) g + s(f) \pa_s(g) = \pa_s(f) g + s(f \pa_s(g)), \end{equation*}
and the anti-invariance of $\pa_s$
\begin{equation} \label{antiinvarianceofpas} \pa_s(s(f)) = -\pa_s(f). \end{equation}
Both are basic consequences of the definition of $\pa_s$.

We claim that $\pa_x(s(f)) = -\pa_x(f)$ as well, when $s \in \rightdes(x)$. This follows from \eqref{antiinvarianceofpas} because $x = w.s$ for some $w \in W$, and $\pa_x = \pa_w \pa_s$. We also claim that $\pa_x \pa_s = 0$, which follows for the same reason. Thus
\begin{equation*} 0 = \pa_x(\pa_s(fg)) = \pa_x(\pa_s(f) g + s(f \pa_s(g))) = \pa_x(\pa_s(f) g) - \pa_x(f \pa_s(g)).\qedhere \end{equation*}
\end{proof}

\begin{proof}[Proof of Theorem~\ref{thm.FrobI}]
%Now fix $I \subset S$ finitary. 
By generalized Demazure surjectivity, we can assume the existence of an element $P_I \in R$ of degree $2\ell(I)$ such that
\begin{equation} \pa_I(P_I) = 1. \end{equation}
%We will apply the above lemma to $x = w_I$ below.

Let us compute $\pa_I(\pa_y(P_I) \pa_z(P_I))$ for various $y, z \in W_I$. If $\ell(y) + \ell(z) > \ell(I)$ then the result is zero for degree reasons. If $\ell(y) + \ell(z) < \ell(I)$ then the result is in $\mg_{R^I}$ for degree reasons. If $\ell(y) + \ell(z) = \ell(I)$ we perform the following analysis. Using \eqref{eq:canshiftsover} over a reduced expression for $y$, we obtain
\begin{equation*} \pa_I(\pa_y(P_I) \pa_z(P_I)) =
\pa_I(P_I \pa_{y^{-1}} \pa_z(P_I)) =
\pa_I(P_I) \pa_{y^{-1}} \pa_z(P_I) = \pa_{y^{-1}} \pa_z(P_I). \end{equation*}
In the second equality we pulled out $\pa_{y^{-1}} \pa_z(P_I)$ since it is a scalar (degree zero). If $y^{-1} . z = w_I$ then this scalar is $1$. Otherwise, the lengths do not add up and $\pa_{y^{-1}} \pa_z = 0$.

To conclude our calculation, we have
\begin{equation*} \pa_I(\pa_y(P_I) \pa_z(P_I)) \equiv \delta_{y^{-1} . z = w_I} + \mg_{R^I}. \end{equation*}
Here $\delta_{y^{-1} . z = w_I}$ is one if $y^{-1}.z = w_I$ (implying that the lengths add up) and zero otherwise.
Then by Proposition~\ref{prop:bases} below together with Lemma~\ref{lem:eqgivesADB}, the sets $\{\pa_y(P_I)\}_{y \in W_I}$ and $\{\pa_z(P_I)\}_{z \in W_I}$ are almost dual bases for $R$ over $R^I$, when indexed appropriately. To match bases we should set $z = y^\circ$ where $y^{-1} . y^\circ = w_I$.
\end{proof}

\begin{prop}\label{prop:bases}
    Let $I\subset J$ be finitary subsets of $S$. Then $R^I$ is free over $R^J$ with basis $\{\pa_I\pa_y(P_J)\}$ where $y$ ranges over minimal coset representatives in $W_I\backslash W_J$.
\end{prop}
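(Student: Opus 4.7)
The plan proceeds in two stages: first establish the $I=\emptyset$ case---that $\{\pa_y(P_J)\}_{y\in W_J}$ is an $R^J$-basis of $R$---then use descent-set bookkeeping to extract the general case.

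For the base case, Chevalley--Shephard--Todd (invoked at the opening of this section) guarantees that $R$ is free over $R^J$, necessarily of rank $|W_J|$. Repeating the computation in the proof of \Cref{thm.FrobI} with $J$ in place of $I$---which uses only the Leibniz rule, the anti-invariance of $\pa_s$, and $\pa_J(P_J)=1$---yields
\[ \pa_J(\pa_y(P_J)\pa_z(P_J)) \equiv \delta_{y\inv.z = w_J} \pmod{\mg_{R^J}} \quad \text{for all } y,z\in W_J. \]
Re-indexing the second family via $y^\circ$, where $y^\circ\in W_J$ is defined by $y\inv.y^\circ = w_J$, the right-hand side becomes $\delta_{y,y'}$, and \Cref{lem:eqgivesADB} then ensures that $\{\pa_y(P_J)\}_{y\in W_J}$ and $\{\pa_{y^\circ}(P_J)\}_{y\in W_J}$ are almost dual $R^J$-bases of $R$; in particular each is an $R^J$-basis.

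Now fix $I\subset J$ and let $y$ range over minimal representatives of $W_I\backslash W_J$. Since $\ell(xy)=\ell(x)+\ell(y)$ for every $x\in W_I$, the product $w_I.y = w_Iy$ is a reduced composition, so $\pa_I\pa_y = \pa_{w_Iy}$ by \eqref{composeDem}. Moreover, the map $y\mapsto w_Iy$ is a bijection from minimal to maximal coset representatives, and the maximal ones are precisely the $z\in W_J$ with $I\subset\leftdes(z)$. Linear independence of $\{\pa_I\pa_y(P_J)\}_y = \{\pa_{w_Iy}(P_J)\}_y$ over $R^J$ is then immediate from the $R^J$-linear independence of the basis of $R$ established above.

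For spanning, take $f\in R^I$ and expand $f = \sum_{z\in W_J} a_z\pa_z(P_J)$ with $a_z\in R^J$. For any $s\in I$, the identity $\pa_s\pa_z = \pa_{sz}$ when $s\notin\leftdes(z)$ (and $0$ otherwise), together with $\pa_s(f)=0$ and the basis property, forces $a_z=0$ for every $z$ with $s\notin\leftdes(z)$. Intersecting over $s\in I$ yields $a_z=0$ unless $I\subset\leftdes(z)$, i.e., unless $z=w_Iy$ for some minimal $y$. Hence $f$ lies in the $R^J$-span of $\{\pa_I\pa_y(P_J)\}_y$, completing the proof. The one substantive external input is the Chevalley--Shephard--Todd freeness needed for the base case; everything else is bookkeeping with descent sets and \eqref{composeDem}.
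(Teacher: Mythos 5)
Your general-step argument (descent-set bookkeeping to force $g_x=0$ unless $I\subset\leftdes(x)$, then repackaging via $\pa_I\pa_y=\pa_{w_I.y}$) is essentially identical to the paper's. The genuine difference is in the base case $I=\emptyset$. The paper cites Demazure's Proposition~5 directly, which establishes that $\{\pa_y(P_J)\}_{y\in W_J}$ is an $R^J$-basis of $R$ under the hypothesis that the torsion index is $1$, and observes that generalized Demazure surjectivity forces the torsion index to be $1$. You instead invoke Chevalley--Shephard--Todd for freeness, assert the rank is $|W_J|$, rerun the almost-dual pairing computation from Theorem~\ref{thm.FrobI}, and close with Lemma~\ref{lem:eqgivesADB}. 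This is a correct alternative and, conveniently, it sidesteps the circular-looking ordering in the paper (where the proof of Theorem~\ref{thm.FrobI} forward-references Proposition~\ref{prop:bases}). Two small caveats worth spelling out if you take this route: (i) your step ``necessarily of rank $|W_J|$'' deserves a word --- it follows from the comparison of graded Hilbert series of $R$ and $R^J$, both polynomial rings, but is not literally part of the freeness statement; and (ii) the paper's footnote allows $\Bbbk$ to be a general domain, a setting where classical CST is not automatic, whereas Demazure's torsion-index argument is calibrated exactly to the generalized Demazure surjectivity hypothesis; so the paper's citation is marginally more robust, even though both are fine over a field as the paper's main text assumes.
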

\begin{proof}
In the case when $I=\emptyset$, this is proved in \cite[Prop. 5]{Demazure} if generalized Demazure surjectivity holds (the torsion index defined in \cite[\S 5]{Demazure} is indeed $1$ in this case).  Thus we can write any $f \in R$ uniquely as $f = \sum g_x \pa_x(P_J)$, where $g_x \in R^J$ and the sum is indexed over $x \in W_J$.

Let now $I$ be arbitrary, and $f \in R^I$. As above we can write $f =\sum g_x \pa_x(P_J)$. We have $\pa_s(f)=0$ for any $s \in I$, and $\pa_s(f) = \sum g_x \pa_s \pa_x(P_J)$. Either $\pa_s \pa_x = 0$ or $\pa_s \pa_x = \pa_{sx}$; we conclude that
\[ 0 = \pa_s(f) = \sum_{sx > x} g_x \pa_{sx}(P_J).\]
By linear independence we conclude that $g_x = 0$ whenever $sx>x$. Repeating this argument for all $s \in I$, we deduce $g_x = 0$ unless $I \subset \leftdes(x)$, so that $x = w_I . y$ for some minimal coset representative $y \in W_J$.

In conclusion, we can write 
\[ f =\sum g_{w_I.y} \pa_I \pa_y (P_J)\] and the claim follows.
\end{proof}

\begin{proof}[Proof of Theorem \ref{thm:FrobExtIJ}]
$I \subset J \subset S$ are finitary subsets.  
We claim that 
\[ \{\pa_I \pa_y(P_J)\}_{y \in W_I \backslash W_J} \qquad \text{and}  \qquad \{ \pa_I \pa_z(P_J)\}_{z \in W_I \backslash W_J}\]
satisfy \eqref{eq:almostdual}. Here we are using $\pa_y$ and $\pa_z$ to mean  $\pa_{\mi{y}}$ and $\pa_{\mi{z}}$, where $\mi{y}$ and $\mi{z}$ are the minimal coset representatives for the cosets $y$ and $z$. We do so to avoid cluttering the page with underlines. By Lemma~\ref{lem:eqgivesADB} and Proposition~\ref{prop:bases} the condition \eqref{eq:almostdual} are enough for the two sets to be almost dual bases for $R^I$ over $R^J$, when indexed appropriately.  
%are almost dual bases for $R^I$ over $R^J$, when indexed appropriately. 

First, observe that these bases consist of homogeneous elements of $R^I$, being in the image of $\pa_I$. Second, we have
\begin{equation*} \pa^I_J{\color{red}\big(}\pa_I \pa_y(P_J) \pa_I \pa_z(P_J){\color{red}\big)} = \pa^I_J{\color{red}\big(}\pa_I {\color{green}\big(}\pa_y(P_J) \pa_I \pa_z(P_J) {\color{green}\big)}  {\color{red}\big)} = \pa_J{\color{green}\big(}\pa_y(P_J) \pa_I \pa_z(P_J) {\color{green}\big)}. \end{equation*}
The first equality holds because $\pa_I \pa_z(P_J)$ is in $R^I$ and $\pa_I$ is $R^I$-linear. The second equality holds because $\pa^I_J \circ \pa_I = \pa_J$. Now apply \eqref{eq:canshiftsover} with $x = w_J$ to deduce that
\begin{equation*} \pa_J(\pa_y(P_J) \pa_I \pa_z(P_J)) = \pa_J(P_J \pa_{y^{-1}} \pa_I \pa_z(P_J)). \end{equation*}
Using arguments similar to the previous proof, we deduce that 
\begin{equation} \label{dualpairingIJ} \pa^I_J(\pa_I \pa_y(P_J) \pa_I \pa_z(P_J)) \equiv \delta_{y^{-1} . w_I . z = w_J} + \mg_{R^J}. \end{equation}
Thus to index dual bases to match, we set $z = y^\circ$ where $y^{-1} . w_I . y^\circ = w_J$. Indeed, $y \mapsto y^\circ$ is a bijection on minimal left coset representatives for $W_I \backslash W_J$.
\end{proof}

\begin{rem} To our knowledge, this proof first appeared in \cite[Sec. 24.3]{GBM}. We have filled in some of the missing details above.
\end{rem}

\subsection{New results on almost dual bases}

Suppose that $I, J \subset S$ are such that $I \cup J$ is finitary. By Theorem~\ref{thm:FrobExtIJ}, $R^I \subset R^{I \cap J}$ is a Frobenius extension, so there exist dual bases relative to $\pa^{I \cap J}_I$. However, a stronger statement is true, to whit:
\begin{equation} \label{eq:star}\tag{$\star$} \begin{array}{c}\text{There are dual bases for $R^{I \cap J}$ over $R^I$  for which}\\\text{one of the bases lives in the subring $R^{J}$.}\end{array} \end{equation}
This condition first arose in \cite{ESWFrob}, where it was also denoted $(\star)$. It is a prerequisite for the diagrammatic technology of \cite{ESWFrob} to apply to a square of Frobenius extensions.  The condition \eqref{eq:star} may fail without the assumption that $I \cup J$ is finitary. See Example~\ref{affin}. 

In Theorem~\ref{thm:dualbasisinimage} we prove a stronger result, of which \eqref{eq:star} is a special case (see Example~\ref{ex.star}). In a sequel to this paper we construct a basis for morphisms between singular Soergel bimodules. Theorem~\ref{thm:dualbasisinimage} is crucial in our proof that the purported basis is linearly independent. Some examples and additional discussion can be found in the next section.

First, a quick lemma.

\begin{lem} \label{lem:mipsendsJtoK} Let $p$ be an $(I,J)$-coset and let $K = \leftred(p)$. Then $\pa_{\mi{p}}$ sends $R^J$ to $R^K$. \end{lem}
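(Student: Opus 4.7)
The plan is to reduce the claim $\pa_{\mi{p}}(R^J) \subseteq R^K$ to a family of vanishing statements $\pa_s \pa_{\mi{p}}(f) = 0$, one for each $s \in K$ and each $f \in R^J$. This reduction is valid because $R^K = \bigcap_{s \in K} \ker \pa_s$. The strategy for each such vanishing will be to swap the simple reflection from the left of $\mi{p}$ to the right, using the Howlett/Kilmoyer bijection between $\leftred(p)$ and $\rightred(p)$, thereby converting a statement about $\pa_s$ hitting the output of $\pa_{\mi{p}}$ into a statement about $\pa_t$ (for some $t \in J$) hitting the input $f$, which is then killed by the $W_J$-invariance of $f$.

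Fix $s \in K$. By the definition $K = I \cap \mi{p} J \mi{p}^{-1}$, the element $t := \mi{p}^{-1} s \mi{p}$ lies in $W_J$; the standard Howlett/Kilmoyer analysis of double cosets (\cite[Lemma 2.12]{EKo}, cf.\ \S\ref{ssec:core}) gives the stronger statement that $t$ is in fact a simple reflection, lying in $\rightred(p) \subseteq J$. Since $\mi{p}$ is minimal in its $(I,J)$-coset and $s \in I$, we have $s\mi{p} > \mi{p}$, so $s.\mi{p}$ is a reduced composition; the identity $s\mi{p} = \mi{p}t$ then forces $\mi{p}.t$ to be reduced as well. Applying \eqref{composeDem} to both factorizations of the common element $s\mi{p} = \mi{p}t$ yields the operator identity
\[ \pa_s \circ \pa_{\mi{p}} \;=\; \pa_{s\mi{p}} \;=\; \pa_{\mi{p}t} \;=\; \pa_{\mi{p}} \circ \pa_t \]
as linear endomorphisms of $R$.

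Evaluated on $f \in R^J$, the right-hand side gives $\pa_{\mi{p}}(\pa_t(f)) = 0$, because $t \in J$ makes $f$ a $t$-invariant and hence $\pa_t(f) = 0$. Ranging over all $s \in K$ concludes the argument. The only structurally nontrivial ingredient is the upgrade of $t \in W_J$ to the statement that $t$ is a simple reflection of $J$; without this one cannot conclude $\pa_t(f) = 0$ from $f \in R^J$ alone. This is precisely the Howlett/Kilmoyer property of redundancy subgroups, already part of the toolkit imported in Section \ref{sec:singularrex}, so I expect this to be the sole check with any real content, and a routine one at that.
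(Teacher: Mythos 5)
Your proof is correct, and it is genuinely different from both of the paper's two proofs. You verify the containment $\pa_{\mi{p}}(R^J) \subseteq R^K$ directly at the level of operators: for each $s \in K$ you produce a simple reflection $t = \mi{p}^{-1}s\mi{p} \in J$, show $s.\mi{p} = \mi{p}.t$ is reduced on both sides (the left side because $\mi{p}$ is minimal in its coset and $s \in I$, the right side by comparing lengths), and then push $\pa_s$ past $\pa_{\mi{p}}$ via \eqref{composeDem} so it becomes $\pa_t$ killing the $W_J$-invariant input. One small point: you do not actually need to invoke Howlett/Kilmoyer as a ``stronger statement'' --- the fact that $t$ is a simple reflection in $J$ is immediate from the definition $\leftred(p) = I \cap \mi{p} J \mi{p}^{-1}$, since membership in $\mi{p} J \mi{p}^{-1}$ literally means $\mi{p}^{-1}s\mi{p} \in J$ (and also lands you in $\rightred(p)$ by symmetry). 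The paper's two arguments are more structural: the first uses that $R^J = \im\pa_J$, that $K \subset \leftdes(\mi{p}w_J)$, and factors $\pa_{\mi{p}}\pa_J = \pa_K\pa_w$; the second deduces the statement as a special case of Lemma~\ref{lem:sendsJtoI} via the core factorization $p = [[I \supset K]].p'$ and the identity $\pa_{p'} = \pa_{\mi{p}}$. Your argument has the mild advantage of not relying on generalized Demazure surjectivity (the paper's first proof needs $\im\pa_J = R^J$), whereas the paper's routes showcase how the lemma sits inside the machinery of reduced expressions and cores already developed for the main theorems.
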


\begin{proof} We give two quick proofs. For the first proof, note that $R^J$ is the image of $\pa_J$, and that $\mi{p}. w_J$ has $K$ in its left descent set. Then $\pa_{\mi{p}} \pa_J = \pa_K \pa_w$ for some $w \in W$, and the image lives in $R^K$.

For the second proof, recall from Theorem~\ref{thmA} that $p = [[I \supset K]] . p'$, where $p'$ is a particular $(K,J)$-coset. By Equation \eqref{upper}, we see that  $\ma{p} = (w_I w_K^{-1}).\ma{p'}$, and with Equation \eqref{mapmip} we obtain  $\ma{p'} = \mi{p} . w_J$. Thus $\pa_{p'} = \pa_{\mi{p}}$, and it is a map from $R^J$ to $R^K$ by Lemma~\ref{lem:sendsJtoI}. \end{proof}

\begin{lem} \label{lem:almostdualexpl}  Let $I, J, L \subset S$ be such that $I \cup J \subset L$ and $L$ is finitary. Pick $P_I, P_L \in R$ such that $\pa_I(P_I) = 1$ and $\pa_L(P_L) = 1$. Let $p$ be an $(I,J)$-coset contained in $W_L$, and let $K = \leftred(p)$. Let $y \in W_I$ be a minimal representative for its coset $W_{K} y$, and let $z \in W_L$ be arbitrary. We say that $z$ is \emph{dual} to $y$ (relative to $p$) if
\begin{equation} \label{eq:circdual} y^{-1} . \mi{p} . w_J . z = w_L. \end{equation}
Then $z$ is dual to $y$ if and only if $z = y^{\circ}$, where
\begin{equation}\label{eq:ycirc} y^{\circ} = w_J \mi{p}^{-1} y w_L. \qedhere\end{equation}
Moreover,
\begin{equation} \label{eq:leadstoalmostdual} \pa^{K}_I(\pa_{K} \pa_y(P_I) \cdot \pa_{\mi{p}} \pa_J \pa_z(P_L)) = \delta_{z,y^{\circ}} + \mg_{I}. \end{equation}
Here, $\mg_I$ represents the ideal of positive degree elements in $R^I$.
\end{lem}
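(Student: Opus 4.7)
The plan is to handle the two claims separately. Claim 1, the formula $z = y^{\circ}$, is pure Coxeter combinatorics; Claim 2 is a direct adaptation of the Demazure calculation behind \Cref{thm.FrobI,thm:FrobExtIJ}, combined with a degree count.

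For Claim 1, the forward direction is immediate: if $y^{-1} . \mi{p} . w_J . z = w_L$ then comparing group elements forces $z = w_J \mi{p}^{-1} y w_L = y^{\circ}$. For the converse, taking $z = y^{\circ}$, I verify that the four lengths sum to $\ell(w_L)$ in three steps. First, because $\mi{p}$ is the minimum of the left coset $W_I \mi{p}$ it has no left descent in $I$, and a standard argument gives $\ell(y^{-1} \mi{p}) = \ell(y) + \ell(\mi{p})$ for any $y \in W_I$. Second, I show that $y^{-1}\mi{p}$ has no right descent in $J$: for $t \in J$, either $\mi{p} t$ also has no left descent in $I$ (so $\ell(y^{-1}\mi{p} t) = \ell(y) + \ell(\mi{p}) + 1$), or $\mi{p} t = s\mi{p}$ for some $s \in I$, in which case $s \in I \cap \mi{p} J \mi{p}^{-1} = K$, and the minimality of $y$ in $W_K y$ gives $\ell(sy) = \ell(y) + 1$, so $\ell(y^{-1}\mi{p} t) = \ell(y^{-1}s\mi{p}) = \ell(y) + 1 + \ell(\mi{p})$. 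This yields $\ell(y^{-1}\mi{p} w_J) = \ell(y) + \ell(\mi{p}) + \ell(w_J)$. Third, $y^{-1}\mi{p} w_J \in W_L$ and $y^{\circ} = (y^{-1}\mi{p} w_J)^{-1} w_L$, so the standard identity $\ell(x^{-1}w_L) = \ell(w_L) - \ell(x)$ for $x \in W_L$ gives the desired total.

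For Claim 2, I first simplify the operator expression. Since $y$ is minimal in $W_K y$, lengths add in $w_K . y$ and $\pa_K \pa_y(P_I) = \pa_{w_K y}(P_I) \in R^K$; by \Cref{lem:mipsendsJtoK}, $\pa_{\mi{p}}\pa_J\pa_z(P_L) \in R^K$ as well. Because $\pa_K$ is $R^K$-linear and $\pa^K_I \circ \pa_K = \pa_I$, the left-hand side of \eqref{eq:leadstoalmostdual} becomes $\pa_I(\pa_y(P_I) \cdot \pa_{\mi{p}}\pa_J\pa_z(P_L))$. Applying the transfer identity \eqref{eq:canshiftsover} letter by letter along a reduced expression for $y$ (each simple reflection lies in $\rightdes(w_I)$), exactly as in the proof of \Cref{thm.FrobI}, this further rewrites as $\pa_I(P_I \cdot \pa_{y^{-1}}\pa_{\mi{p}}\pa_J\pa_z(P_L))$.

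The inner expression is the composed operator $\pa_{y^{-1}}\pa_{\mi{p}}\pa_J\pa_z$ evaluated on $P_L$; by \eqref{composeDem} this equals $\pa_{y^{-1}\mi{p} w_J z}(P_L)$ when $y^{-1} . \mi{p} . w_J . z$ is a reduced composition, and vanishes otherwise. A degree count then splits into three regimes: if $\ell(z) > \ell(w_L) - \ell(y) - \ell(\mi{p}) - \ell(w_J)$ the expression has negative degree and hence vanishes; if $\ell(z)$ is strictly smaller the output has strictly positive degree and lies in $\mg_I$; in the degree-matching case the four lengths sum to $\ell(w_L)$, and since $y^{-1}\mi{p} w_J z \in W_L$, length additivity is equivalent to this group element being $w_L$, which by Claim 1 is equivalent to $z = y^{\circ}$. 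When $z = y^{\circ}$, $\pa_{w_L}(P_L) = 1$ and $\pa_I(P_I) = 1$ deliver the constant $1$; otherwise the contribution is $0$ or in $\mg_I$. The main obstacle is the case analysis in Claim 1 showing $y^{-1}\mi{p}$ has no right descent in $J$: this is exactly where the redundancy $K = \leftred(p)$ plays a nontrivial role, since the obstruction to length additivity comes precisely from pairs $(s,t) \in K \times J$ with $s\mi{p} = \mi{p} t$, and the hypothesis that $y$ is minimal in $W_K y$ is what neutralizes them.
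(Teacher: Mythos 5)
Your proposal is correct, and for the operator computation it coincides with the paper's argument: reduce via $R^K$-linearity and $\pa^K_I\circ\pa_K = \pa_I$, transfer $\pa_y$ across the product using \eqref{eq:canshiftsover}, then sort cases by degree and by whether $y^{-1}.\mi{p}.w_J.z$ is a reduced composition.

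Where you differ is in the combinatorial core, Claim 1. The paper simply invokes {[}EKo, Lemma 2.12{]} to conclude that $y^{-1}.\mi{p}.w_J$ is automatically a reduced composition for $y$ a minimal representative in $W_K\backslash W_I$, and that it ranges over the interval $\mi{p}w_J\le x\le \ma{p}$ inside $p$; uniqueness of $z$ then comes from $x \le w_L$. You instead give a self-contained proof: $\ell(y^{-1}\mi{p})=\ell(y)+\ell(\mi{p})$ is the standard minimality-of-$\mi{p}$ fact, and then to show $y^{-1}\mi{p}$ has no right descent in $J$ you run the Deodhar dichotomy on $\mi{p}t$ for $t\in J$: either $\mi{p}t$ remains $I$-minimal, or $\mi{p}t=s\mi{p}$ for a unique $s\in I$, in which case $s\in I\cap\mi{p}J\mi{p}^{-1}=K$ and minimality of $y$ in $W_Ky$ absorbs the extra letter. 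This is a nice explication of \emph{why} the redundancy $K=\leftred(p)$ is exactly the right hypothesis; the paper's proof, by deferring to the cited lemma, leaves that mechanism hidden. The trade-off is that your argument implicitly uses Deodhar's lemma (the dichotomy for $\mi{p}t$), which you label only ``a standard argument''; it would be worth naming or citing it explicitly. Also a small wording nit: $\mi{p}$ is minimal in the double coset $W_I\mi{p}W_J$ (hence in $W_I\mi{p}$ and in $\mi{p}W_J$), not just in the left coset, and you use both minimality statements.

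One other harmless point: you remark that $\pa_K\pa_y(P_I)=\pa_{w_Ky}(P_I)\in R^K$ ``since lengths add in $w_K.y$''; membership in $R^K$ follows already from $\im\pa_K\subset R^K$ without the length-additivity observation (though the latter is true and used elsewhere).
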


\begin{proof}
We use an argument similar to those in the previous section.
%Fix homogeneous elements $P_I, P_L \in R$ such that $\pa_I(P_I) = 1$ and $\pa_L(P_L) = 1$. 
We consider the pairing
\[ \pa^K_I(\pa_K \pa_y(P_I) \pa_{\mi{p}} \pa_J \pa_z(P_L)).\]
Here, $y$ will range over minimal right coset representatives in $W_K \backslash W_I$; 
we have already seen in the previous section that $\{\pa_K \pa_y(P_I)\}$ is one of a pair of almost dual bases for $R^K$ over $R^I$.

By Lemma~\ref{lem:mipsendsJtoK}, $\pa_{\mi{p}} \pa_J$ has image contained in $R^K$. Since $\pa_K$ is $R^K$-linear, we have
\begin{align*}  \pa^K_I{\color{red}\big(}\pa_K \pa_y(P_I) &\pa_{\mi{p}} \pa_J \pa_z(P_L){\color{red}\big)} =
\pa^K_I{\color{red}\big(}\pa_K {\color{green}\big(}\pa_y(P_I) \pa_{\mi{p}} \pa_J \pa_z(P_L) {\color{green}\big)} {\color{red}\big)} \\ & =
\pa_I{\color{green}\big(}\pa_y(P_I) \pa_{\mi{p}} \pa_J \pa_z(P_L) {\color{green}\big)}. \end{align*}
Since $y \in W_I$, we can use \eqref{eq:canshiftsover} to prove
\begin{equation*} \pa_I(\pa_y(P_I) \pa_{\mi{p}} \pa_J \pa_z(P_L)) =
\pa_I(P_I \pa_{y^{-1}} \pa_{\mi{p}} \pa_J \pa_z(P_L)). \end{equation*}

If $y^{-1}. \mi{p} . w_J . z$ is not a reduced composition then the result is zero. So assume that $y^{-1} . \mi{p} . w_J . z = y^{-1} \mi{p} w_J z$. If $\ell(y^{-1} . \mi{p} . w_J . z) > \ell(L)$ then the result is zero for degree reasons, and if $\ell(y^{-1} . \mi{p} . w_J . z) < \ell(L)$ then the result lives in $\mg_{R^I}$ for degree reasons. If $\ell(y^{-1} . \mi{p} . w_J . z) = \ell(L)$ then we must have $y^{-1} . \mi{p} . w_J . z = w_L$, in which case
\begin{equation*} \pa_I(P_I \pa_{y^{-1}} \pa_{\mi{p}} \pa_J \pa_z(P_L)) = \pa_I(P_I) = 1. \end{equation*}
Thus we deduce that
\begin{equation} \pa^K_I(\pa_K \pa_y(P_I) \pa_{\mi{p}} \pa_J \pa_z(P_L)) = \delta_{y^{-1} . \mi{p} . w_J . z = w_L} + \mg_{R^I}. \end{equation}

Since $y$ runs over the minimal right coset representatives for $W_K\backslash W_I$, the element $y^{-1}$ runs over the minimal left coset representatives for $ W_I/W_K$. Thus by \cite[Lemma 2.12]{EKo} the composition $x = y^{-1}.\mi{p}.w_J$ is always reduced, and ranges amongst all elements $x$ of $p$ for which $\mi{p} w_J \le x \le \ma{p}$. Since $x \le \ma{p} \le w_L$, there exists a unique $z$ such that $x.z = w_L$. Moreover, we have $z=y^\circ$ for $y^\circ=w_J\mi{p}^{-1}yw_L$.
\end{proof}

% Thus $\{\pa_K \pa_y(P_I)\}$ and $\{\pa_{\mi{p}} \pa_J \pa_{y^\circ}(P_L)\}$ are almost dual bases. 
% Here $y$ ranges among minimal coset representatives for right cosets in $W_K \backslash W_I$, and $y^{-1} \mi{p} w_J . y^{\circ} = w_L$.  More explicitly, we have \eqref{eq:ycirc} as claimed.

\begin{thm}\label{thm:almostdualinimage}
Let $I, J, L \subset S$ be such that $I \cup J \subset L$ and $L$ is finitary. Let $p$ be an $(I,J)$-coset contained in $W_L$, and let $K = \leftred(p)$. Then 
\begin{equation}\label{eq:almostdualbasis}
    \{\pa_K \pa_y(P_I)\},\quad \{\pa_{\mi{p}} \pa_J \pa_{y^\circ}(P_L)\}
\end{equation} are almost dual bases of $R^K$ over $R^I$, where $y$ ranges among minimal coset representatives for right cosets in $W_K \backslash W_I$ and $y^{\circ}$ is as in \eqref{eq:ycirc}.
% \begin{equation}\label{eq:ycirc} y^{\circ} = w_J \mi{p}^{-1} y w_L. \end{equation}
\end{thm}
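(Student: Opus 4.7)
The plan is to assemble the theorem from three ingredients already in place: Lemma~\ref{lem:almostdualexpl} supplies the pairing formula \eqref{eq:leadstoalmostdual}, Proposition~\ref{prop:bases} identifies one of the two collections as a basis of $R^K$ over $R^I$, and Lemma~\ref{lem:eqgivesADB} upgrades ``pairs satisfying \eqref{eq:almostdual}'' to ``almost dual bases'' provided one already knows the ambient module is free of the matching rank. No new computation is really needed; the content is in verifying that the hypotheses of these three tools are met.

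Concretely, I would proceed as follows. First, verify that both collections in \eqref{eq:almostdualbasis} consist of homogeneous elements of $R^K$. For $\{\pa_K \pa_y(P_I)\}$ this is immediate, as $\im \pa_K \subset R^K$. For $\{\pa_{\mi{p}} \pa_J \pa_{y^\circ}(P_L)\}$, note that $\pa_J \pa_{y^\circ}(P_L) \in R^J$, and then apply Lemma~\ref{lem:mipsendsJtoK} to conclude $\pa_{\mi{p}} \pa_J \pa_{y^\circ}(P_L) \in R^K$. Homogeneity is automatic from the degrees of the operators. Second, since $K = \leftred(p) \subset I$ and $I$ is finitary, Proposition~\ref{prop:bases} applies to the inclusion $K \subset I$ and tells us that $R^K$ is free over $R^I$ of rank $|W_K \backslash W_I|$ with explicit basis $\{\pa_K \pa_y(P_I)\}_{y}$ as $y$ ranges over minimal coset representatives for $W_K \backslash W_I$. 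Both collections in \eqref{eq:almostdualbasis} are indexed by this same set and therefore have matching cardinality. Third, specialize Lemma~\ref{lem:almostdualexpl} to $z = y^{\circ}$: identity \eqref{eq:leadstoalmostdual} is precisely the almost-dual pairing condition \eqref{eq:almostdual} of Definition~\ref{def:gradedfrobext} for the trace $\pa^K_I$ of degree $-2(\ell(I)-\ell(K))$. Finally, invoke Lemma~\ref{lem:eqgivesADB} with $A = R^I$, $B = R^K$, and $n = |W_K\backslash W_I|$, taking the first collection as the $c_i$ and the second as the $d_i$; this yields the desired almost dual bases.

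There is no deep obstacle, since Lemma~\ref{lem:almostdualexpl} already carries out the key computation and Proposition~\ref{prop:bases} supplies the free-module statement. The main hazard is bookkeeping: one must check that $y^{\circ} = w_J \mi{p}^{-1} y w_L$ really lies in $W_L$ so that $\pa_{y^{\circ}}(P_L)$ fits into the framework of Lemma~\ref{lem:almostdualexpl} (it does, because $p \subset W_L$ forces $\mi{p} \in W_L$, and $y \in W_I \subset W_L$), and that the degree shift built into $\pa^K_I$ matches the total degree of the product of the two basis elements. This latter compatibility reduces to the length identity $\ell(L) = \ell(y) + \ell(\mi{p}) + \ell(J) + \ell(y^{\circ})$, which is precisely the reducedness of the composition in \eqref{eq:circdual}.
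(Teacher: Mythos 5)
Your proposal is correct and matches the paper's proof, which likewise combines Lemma~\ref{lem:almostdualexpl} for the pairing formula with Proposition~\ref{prop:bases} for the rank count and Lemma~\ref{lem:eqgivesADB} to conclude. You spell out more of the hypothesis-checking (homogeneity, $y^\circ \in W_L$, degree compatibility) than the paper does, but the route is the same.
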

\begin{proof}
It follows by \Cref{lem:almostdualexpl} that the two sets satisfy \eqref{eq:almostdual}. We also notice that $|W_K\backslash W_I|=\rank_{R^I}R^K$ by \Cref{prop:bases}, so they are almost dual bases by \Cref{lem:eqgivesADB}.
\end{proof}

\begin{thm} \label{thm:dualbasisinimage}
Let $I, J, L \subset S$ be such that $I \cup J \subset L$ and $L$ is finitary. Let $p$ be an $(I,J)$-coset contained in $W_L$, and let $K = \leftred(p)$. Then there exists a pair of dual bases for $R^K$ over $R^I$, where one of the bases is in the image of $\pa_{\mi{p}}$ when viewed as a map $R^J \to R^K$.
\end{thm}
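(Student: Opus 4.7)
My plan is to combine \Cref{thm:almostdualinimage} with a Gram--Schmidt correction, applied crucially to the basis $\{c_i\}$ rather than to $\{d_i\}$. The whole idea is to leave the right-hand basis (which is visibly in the image of $\pa_{\mi{p}}$) untouched, and to correct only on the side where corrections are cheap.

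First I would invoke \Cref{thm:almostdualinimage} to obtain the almost dual bases
\[ c_i := \pa_K \pa_{y_i}(P_I), \qquad d_i := \pa_{\mi{p}} \pa_J \pa_{y_i^\circ}(P_L), \]
of $R^K$ over $R^I$. By construction each $d_i$ lies in the image of $\pa_{\mi{p}} \co R^J \to R^K$: the element $\pa_J \pa_{y_i^\circ}(P_L)$ belongs to $R^J$ (being in the image of $\pa_J$), and $d_i = \pa_{\mi{p}}(\pa_J \pa_{y_i^\circ}(P_L))$.

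Next I would correct $\{c_i\}$ to produce an honest dual basis. Order the indices so that $\deg c_i \leq \deg c_j$ for $i < j$, and set $M_{ij} := \pa^K_I(c_i d_j)$. By the degree argument that appears in the proof of \Cref{lem:eqgivesADB}, the matrix $M$ is lower unitriangular, with entries in $\mg_{R^I}$ strictly below the diagonal; consequently $M^{-1}$ is also lower unitriangular with entries in $R^I$. Define $c'_i := \sum_k (M^{-1})_{ik}\, c_k$. Then each $c'_i$ lies in $R^K$ (since $R^K$ is an $R^I$-module containing the $c_k$), the collection $\{c'_i\}$ is still an $R^I$-basis of $R^K$ (since $M^{-1}$ is invertible), and by construction $\pa^K_I(c'_i d_j) = \delta_{ij}$. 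Thus $\{c'_i\}$ and $\{d_i\}$ are genuine dual bases, and $\{d_i\}$ remains in $\im(\pa_{\mi{p}}|_{R^J})$ because we never modified it.

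The only conceptual point worth flagging is the choice of \emph{which} side to correct. A symmetric procedure modifying $\{d_i\}$, as in \Cref{lem:ADBgivesDB}, would require the (a priori unavailable) fact that $\im(\pa_{\mi{p}}|_{R^J})$ is closed under multiplication by $R^I$ inside $R^K$; the whole point of the theorem is to produce a dual basis in this image, so we are not entitled to assume such a closure property. The asymmetric Gram--Schmidt above avoids the issue entirely, since correcting on the $c$-side only uses that $R^K$ is an $R^I$-module, which is automatic from $K \subset I$.
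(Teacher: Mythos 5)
Your proof is correct and takes essentially the same approach as the paper: both deduce the result from \Cref{thm:almostdualinimage} via a Gram--Schmidt triangularization that keeps fixed the basis lying in the image of $\pa_{\mi{p}}$ and corrects the other. The subtlety you flag about \emph{which} side to correct is precisely what the paper's terse invocation of \Cref{lem:ADBgivesDB} relies on (one applies it with the roles of the two almost dual bases interchanged, which is permissible by the symmetry of condition \eqref{eq:almostdual}), so your explicit matrix version $c'_i = \sum_k (M^{-1})_{ik} c_k$ is just an unpacking of that step.
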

\begin{proof}
    By Lemma~\ref{lem:ADBgivesDB}, the claim follows from Theorem~\ref{thm:almostdualinimage} since the second basis in \eqref{eq:almostdualbasis} is in the image of $\pa_{\mi{p}}$.
\end{proof}

\subsection{Additional remarks and examples}

First, some examples of Theorem~\ref{thm:dualbasisinimage}.

\begin{ex}
Let $W=S_4$ and $S=\{s_1,s_2,s_3\}$. Let $I=\{s_1,s_3\}$ and consider the $(I,I)$-coset $p$ with minimal element $s_2$, so that $\partial_{\mi{p}}=\partial_2$. Let $P_S\in R$ with $\partial_S(P_S)=1$ and let $P_I=\partial_{w_I^{-1}w_S}(P_S)=\partial_{2132}(P_S)$. Notice that $\partial_I(P_I)=\partial_I\partial_{w_I^{-1}w_S}(P_S)=1$.

We have $\leftred(p)=\emptyset$. We can apply $\partial_{\mi{p}}$ to a strict subset of the basis $\{\partial_{13}\partial_z(P_S)\}$ of $R^I$ over $R^S$ (where $z$ ranges over $W_I \backslash W$) to obtain a  basis of $R$ over $R^I$. In fact, we have
\begin{align*}
\partial_2\partial_{13}\partial_2(P_S) &=P_I\\
\partial_2\partial_{13}\partial_{21}(P_S) =\partial_3\partial_{2132}(P_S)&=\partial_3(P_I)\\
\partial_2\partial_{13}\partial_{23}(P_S) =\partial_1\partial_{2132}(P_S)&=\partial_1(P_I)\\
\partial_2\partial_{13}\partial_{231}(P_S) =\partial_{13}\partial_{2132}(P_S)&=\partial_{13}(P_I)=1.
\end{align*}
\end{ex}

\begin{ex}\label{ex.star} Let $I, J \subset S$ be such that $I \cup J$ is finitary, and let $p$ be the $(I,J)$-coset containing the identity. Then $\leftred(p) = I \cap J$ and $\pa_{\mi{p}}$ is the identity map. Thus Theorem~\ref{thm:dualbasisinimage} is the same statement as \eqref{eq:star} in this case.
\end{ex}

One can think of Theorem~\ref{thm:dualbasisinimage} as being the correct generalization of \eqref{eq:star} beyond identity double cosets.

For sake of discussion, let us isolate the conclusion of Theorem~\ref{thm:dualbasisinimage} from its hypotheses.% Let $p$ be an $(I,J)$-coset and . Consider the following statement.
\begin{equation}\label{eq:starstar}\tag{$\star \star$}\begin{array}{c}
\text{For all $(I,J)$-cosets $p$, letting $K=\leftred(p)$, there exists a}\\
\text{pair of dual bases for $R^K$ over $R^I$ where one of the bases is }\\
\text{in the image of $\pa_{\mi{p}}$ when viewed as a map $R^J \to R^K$.}
\end{array}\end{equation}

Our final task is to discuss potential generalizations of Theorem~\ref{thm:dualbasisinimage} to infinite Coxeter groups. Recall that \eqref{eq:star} came with the hypothesis that the ambient parabolic subgroup $I \cup J$ is finitary, and \eqref{eq:starstar} was proven in Theorem~\ref{thm:dualbasisinimage}  under the hypothesis that the ambient parabolic subgroup $L$ is finitary. Both \eqref{eq:star} and \eqref{eq:starstar} make sense without these finiteness hypotheses, but may or may not be true depending on the realization. We are interested in additional hypotheses on the realization which might imply these conclusions.

For illustration let us discuss \eqref{eq:star} in the example of a dihedral group with simple reflections $s$ and $t$. We focus on the statement
\begin{equation}\label{eq:diehdralstar}\tag{$\star_{s,t}$} \qquad \begin{array}{c}\text{There are dual bases for $R$ over $R^s$  for which}\\\text{one of the bases lives in the subring $R^t$.}\end{array} \end{equation}

\begin{lem} In the context above of the dihedral group, \eqref{eq:diehdralstar} is equivalent to the existence of a polynomial $P$ with $\pa_s(P) = 1$ and $\pa_t(P) = 0$, i.e. the existence of a \emph{fundamental weight}. \end{lem}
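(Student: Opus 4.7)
My plan is to exploit that $R^s \subset R$ is free of rank $2$ (Proposition~\ref{prop:bases} applied to $I = \emptyset$, $J = \{s\}$, with basis $\{1, P_s\}$ for any $P_s$ satisfying $\pa_s(P_s) = 1$) and to run the proof through a small, hand-built dual basis on each side. I do not anticipate any real obstacle: everything reduces to a degree-chase, together with the identity $s(P) = P - \al_s$ implied by $\pa_s(P) = 1$.

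For the forward implication, I would assume $\{c_1, c_2\}, \{d_1, d_2\}$ are homogeneous dual bases of $R$ over $R^s$ with each $d_i \in R^t$, and extract $P$ from the degree-$2$ member of the $d_i$'s. The equality $\pa_s(c_i d_i) = 1$ together with $\deg \pa_s = -2$ forces $\deg c_i + \deg d_i = 2$, so each degree pair is $(0,2)$ or $(2,0)$. Since the degree-zero part of $R$ is $\Bbbk$, the pairs cannot both be $(0,2)$ (else $\{c_1, c_2\} \subset \Bbbk$ would span only a rank-$1$ $R^s$-summand of $R$) nor both $(2,0)$, so after reindexing and rescaling one may take $c_1 = 1$ and $d_1$ of degree $2$. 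Then $\pa_s(d_1) = \pa_s(c_1 d_1) = 1$ and $d_1 \in R^t$, so $P := d_1$ satisfies $\pa_s(P) = 1$ and $\pa_t(P) = 0$ via $R^t = \ker \pa_t$.

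For the reverse implication, given $P \in R^t$ with $\pa_s(P) = 1$, I would exhibit dual bases directly by setting
\[ c_1 := -s(P), \quad c_2 := 1, \quad d_1 := 1, \quad d_2 := P, \]
so that $\{d_1, d_2\} \subset R^t$. The required identities $\pa_s(c_i d_j) = \delta_{ij}$ reduce to $\pa_s(-s(P)) = 1$ (from $\pa_s \circ s = -\pa_s$), $\pa_s(1) = 0$, $\pa_s(P) = 1$, and $\pa_s(-s(P) \cdot P) = 0$, where the last holds because $P \cdot s(P)$ is visibly $s$-invariant. Since $\{1, P\}$ is already an $R^s$-basis of $R$ by Proposition~\ref{prop:bases} with $P_s := P$, the invertibility of the resulting pairing matrix (as in the matrix argument of Lemma~\ref{lem:eqgivesADB}) then upgrades $\{c_1, c_2\}$ to a basis as well, and we obtain genuine dual bases for $R$ over $R^s$ with one side contained in $R^t$, realizing $(\star_{s,t})$.
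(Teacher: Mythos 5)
Your proof is correct and follows essentially the same route as the paper: for the reverse direction you construct the same dual bases, since $\pa_s(P)=1$ gives $s(P)=P-\al_s$, so your $c_1=-s(P)$ is exactly the paper's $\al_s-P$. The forward direction, which the paper dismisses as obvious, you verify with a clean degree argument on the rank-two module; this is a reasonable unpacking of the same idea rather than a different approach.
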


\begin{proof} If $P$ exists then $\{1,P\}$ is a basis for $R$ over $R^s$ living in $R^t$. The dual basis is $\{\al_s - P, 1\}$. Note that $\{\al_s - P, 1\}$ does not live in $R^t$ for $m_{st} > 2$. Only one of the two bases is expected to live in $R^t$.

The converse is obvious. \end{proof}

\begin{rem} Note that Demazure surjectivity implies the existence of a polynomial $P$ for which $\pa_s(P) = 1$, but not one which simultaneously satisfies $\pa_t(P) = 0$.\end{rem}

Whether or not \eqref{eq:diehdralstar} holds for the infinite dihedral group depends on the choice of realization.

\begin{ex} Any realization of a finite dihedral group is also a realization of the infinite dihedral group. Here \eqref{eq:diehdralstar} holds by Theorem~\ref{thm:dualbasisinimage}. \end{ex}

\begin{ex}\label{affin} Consider the realization spanned by simple roots, whose Cartan matrix is the usual affine Cartan matrix of type $\tilde{A}_1$. Then $\alpha_s + \alpha_t \in R^{st}$, and spans the linear terms in both $R^s$ and $R^t$. Consequently, there is no fundamental weight, and \eqref{eq:diehdralstar} fails. \end{ex}

Let us isolate a condition on the realization which is equivalent to \eqref{eq:diehdralstar} and separates these two examples.

\begin{lem} \label{lem:starifHsnotHt} Continuing the notation above, let $H_s$ and $H_t$ denote the hyperplanes in the realization $V$ which are fixed by $s$ and $t$ respectively. Then \eqref{eq:diehdralstar} holds if and only if $H_s \ne H_t$. \end{lem}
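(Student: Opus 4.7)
The plan is to combine the reformulation provided by the previous lemma, which says $(\star_{s,t})$ is equivalent to the existence of a fundamental weight $P \in R$ with $\pa_s(P) = 1$ and $\pa_t(P) = 0$, with an elementary linear-algebra computation inside $V$.

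The first step is a degree reduction. Since $\pa_s$ is homogeneous of degree $-2$ and $\deg V = 2$, any solution of $\pa_s(P) = 1$ must be homogeneous of degree $2$, hence an element of $V$ itself. For $P \in V$, the formula $\pa_s(f) = (f - s(f))/\al_s$ simplifies to $\pa_s(P) = \langle \al_s^\vee, P\rangle$, and similarly $\pa_t(P) = 0$ is equivalent to $t(P) = P$, i.e.\ to $\langle \al_t^\vee, P \rangle = 0$, i.e.\ to $P \in H_t$. So the existence of a fundamental weight is equivalent to the existence of $P \in H_t$ with $\langle \al_s^\vee, P\rangle = 1$.

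For the ``only if'' direction, suppose $H_s = H_t$. Then $\ker \al_s^\vee = \ker \al_t^\vee$, so $\al_s^\vee$ and $\al_t^\vee$ differ by a nonzero scalar; in particular every $P \in H_t$ satisfies $\langle \al_s^\vee, P\rangle = 0$, and no fundamental weight exists. Conversely, if $H_s \ne H_t$, then neither hyperplane is contained in the other (they have the same dimension), so one can choose $v \in H_t \setminus H_s$, whereupon $\langle \al_s^\vee, v\rangle \ne 0$ and $P := v / \langle \al_s^\vee, v \rangle$ is a fundamental weight.

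There is no genuinely hard step here: once the previous lemma is invoked, the content is entirely the observation that the degree constraint $\pa_s(P) = 1$ confines the search to the vector space $V$, after which $(\star_{s,t})$ becomes a transparent question about whether the linear functional $\al_s^\vee$ vanishes identically on the hyperplane $H_t$. The only subtleties worth flagging are that the rescaling step uses $\Bbbk$ being a field, and that ``hyperplane'' here means codimension one in $V$, so that $H_t \subset H_s$ forces $H_t = H_s$.
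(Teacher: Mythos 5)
The proposal is correct and follows the same overall route as the paper: both invoke the preceding lemma to reduce $(\star_{s,t})$ to the existence of a fundamental weight, then reduce to degree $2$, i.e.\ to elements of $V$. Where you diverge is in the $H_s \ne H_t$ direction: the paper uses the standing assumption of Demazure surjectivity to produce a linear $P$ with $\pa_s(P)=1$, then uses $V = H_s + H_t$ to split $P = f+g$ with $f \in R^s$, $g \in R^t$, concluding $g$ is the fundamental weight. Your argument is more self-contained: since two distinct hyperplanes cannot nest, pick $v \in H_t \setminus H_s$ and rescale by $\langle \al_s^\vee, v\rangle$. Both are valid; yours has the slight advantage of not invoking Demazure surjectivity (which is indeed automatic here since $\langle \al_s^\vee, \al_s\rangle = 2$ forces $\al_s^\vee \ne 0$), while the paper's is more uniform with the rest of the section. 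You also give a cleaner self-contained argument for the $H_s = H_t$ direction, where the paper merely points back to Example~\ref{affin}.

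One small imprecision: the sentence ``any solution of $\pa_s(P)=1$ must be homogeneous of degree $2$'' is overstated. A solution need not be homogeneous (one can add any $Q$ with $\pa_s(Q)=\pa_t(Q)=0$). What is true, and what the argument actually needs, is that the degree-$2$ component of any solution is again a solution, so one may restrict attention to $V$ without loss of generality. The rest of your computation --- $\pa_s(P) = \langle \al_s^\vee, P\rangle$ for $P \in V$ and $\ker \al_s^\vee = H_s$ --- is exactly right.
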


\begin{proof} If $H_s = H_t$ then no fundamental weight can exist, as e.g. in Example~\ref{affin}.

If $H_s \ne H_t$ then $V = H_s + H_t$, so $R$ is generated by the subrings $R^s$ and $R^t$. Then the linear polynomials in $R$ are spanned by those in $R^s$ and $R^t$. By Demazure surjectivity there is some linear polynomial $f + g$ with $\pa_s(f+g) = 1$, where we assume $f \in R^s$ and $g \in R^t$. Then $\pa_s(g) = 1$ as well, so $g$ is a fundamental weight. \end{proof}

Thus the distinctness of the hyperplanes $H_s$ and $H_t$ is equivalent to \eqref{eq:diehdralstar}, which is a special case of \eqref{eq:starstar}. This motivates why one might expect a relationship between \eqref{eq:starstar} and reflection faithfulness.

\begin{defn} For a reflection $r$ (i.e. a $W$-conjugate of a simple reflection), let $H_r \subset V$ denote the subspace of $V$ fixed by $r$.  A realization is called \emph{reflection faithful} if $H_r$ is a hyperplane for all reflections $r$, and $H_r \ne H_{r'}$ whenever $r \ne r'$. \end{defn}

This condition was the original condition imposed by Soergel for Soergel bimodules to be well-behaved \cite{Soe07}, and is common in the literature (e.g., \cite{EWhodge}). For example, Libedinsky \cite{LL} uses reflection faithfulness to guarantee that double leaves form a basis for morphisms between Bott-Samelson bimodules.  Just as reflection faithfulness is the condition commonly used to guarantee the proper behavior of Soergel bimodules, \eqref{eq:starstar} is what we use in future work to guarantee proper behavior of singular Soergel bimodules.

\begin{conj} \label{conj:dualbasisinimage} Suppose the realization is reflection faithful. Then \eqref{eq:starstar} holds for all double cosets $p$. \end{conj}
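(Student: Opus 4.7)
The plan is to attempt the conjecture by generalizing the proof of Theorem~\ref{thm:dualbasisinimage} to cases where $L := I \cup J$ is not finitary. The essential ingredient there was a fundamental polynomial $P_L \in R$ with $\pa_L(P_L) = 1$, whose existence required finitariness of $L$; reflection faithfulness is expected to serve as the replacement hypothesis. A natural starting point is Lemma~\ref{lem:starifHsnotHt}, which confirms the conjecture in the rank-two infinite dihedral case: there, reflection faithfulness ($H_s \ne H_t$) is exactly the existence of a fundamental weight, which is what $P_L$ provides in the finite setting.

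I would first reduce to the ``core'' setting via Theorem~\ref{thm:core}: any $(I,J)$-coset $p$ factors as $p \expr [[I \supset K]] . p^{\core} . [[\rightred(p) \subset J]]$. The two edge pieces involve only the finitary parabolics $W_I$ and $W_J$, so Theorem~\ref{thm:FrobExtIJ} supplies their standard dual bases. The crux is to produce suitable elements in the image of $\pa_{\mi{p^{\core}}}\co R^{\rightred(p)} \to R^K$ that combine with these edge dual bases to yield dual bases for $R^K$ over $R^I$ of the form required in \eqref{eq:starstar}. To construct these, I would generalize Lemma~\ref{lem:almostdualexpl} by working in the fraction field $\operatorname{Frac}(R)$. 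Each Demazure operator extends to $\operatorname{Frac}(R)$, so the recipe of Lemma~\ref{lem:almostdualexpl} can be carried out with a rational-function substitute for $P_L$, producing candidate almost dual bases whose entries lie a priori only in $\operatorname{Frac}(R)$. The hope is that reflection faithfulness forces the apparent denominators to be units, so that the constructed elements actually lie in $R$, mirroring the rank-two mechanism of Lemma~\ref{lem:starifHsnotHt}.

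The main obstacle is precisely this last step: controlling denominators in the absence of a fundamental polynomial. Reflection faithfulness says that distinct reflections $r \ne r'$ have distinct hyperplanes $H_r \ne H_{r'}$, but translating this into polynomiality of specific preimages under $\pa_{\mi{p}}$ seems to require a careful Gram--Schmidt procedure (modeled on Lemma~\ref{lem:ADBgivesDB}) performed over $\operatorname{Frac}(R)$, with polynomiality checked at every stage. A reasonable first test of the strategy is to verify the conjecture in rank three for an affine group such as $\tilde{A}_2$, where the obstructions first appear beyond the dihedral case; the shape of the rational expressions that arise there should indicate what combinatorial consequence of reflection faithfulness actually drives the general argument.
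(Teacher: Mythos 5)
This statement is a \emph{conjecture} in the paper: the authors offer no proof of it, so there is no argument of theirs to compare yours against. Your submission is also, by its own admission, a research plan rather than a proof --- it ends by identifying an unresolved ``main obstacle'' and proposing a test case. So the honest verdict is that the statement remains unproven on both sides.

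That said, your plan has a gap more fundamental than the one you flag. You propose to rerun Lemma~\ref{lem:almostdualexpl} over $\operatorname{Frac}(R)$ with a rational substitute for $P_L$, and you locate the difficulty in controlling denominators. But the lemma's machinery does not only use the polynomial $P_L$; it is built around the group element $w_L$: the duality condition is $y^{-1}.\mi{p}.w_J.z = w_L$, the dual index is $y^\circ = w_J\mi{p}^{-1}yw_L$, and the degree bookkeeping compares lengths against $\ell(L)$. When $L = I\cup J$ is not finitary, $w_L$ simply does not exist, and no passage to the fraction field supplies it. Note that Lemma~\ref{lem:starifHsnotHt} does \emph{not} proceed by this route either: it directly constructs a fundamental weight $P$ with $\pa_s(P)=1$, $\pa_t(P)=0$ and writes down the bases $\{1,P\}$ and $\{\al_s-P,1\}$ by hand, bypassing $P_L$ and $w_L$ entirely. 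So the rank-two case is not evidence that the finitary argument generalizes; it is evidence that a different, direct construction is needed. Separately, your reduction via Theorem~\ref{thm:core} buys nothing here: the statement \eqref{eq:starstar} already concerns $R^K$ over $R^I$ with $K=\leftred(p)\subset I$ both finitary (so the Frobenius extension itself is never in doubt, by Theorem~\ref{thm:FrobExtIJ}), and since $\mi{p}=\mi{p^{\core}}$ the image condition on $\pa_{\mi{p}}$ is unchanged by passing to the core. The real content --- producing a basis inside $\im(\pa_{\mi{p}}\co R^J\to R^K)$ without an ambient finitary $L$ --- is untouched by your outline.
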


\begin{rem} In the dihedral setting, one thinks of Conjecture~\ref{conj:dualbasisinimage} as being a melange of Lemma~\ref{lem:starifHsnotHt} with the additional ingredient of conjugation by $\mi{p}$. \end{rem}

\begin{rem} Perhaps reflection faithfulness is equivalent to \eqref{eq:starstar}, though it seems likely that a weaker condition would suffice. It would be interesting to find a condition on the distinctness of certain hyperplanes which is equivalent to \eqref{eq:starstar} in general. \end{rem}

\printbibliography

@article {EQuantumI,
    AUTHOR = {Elias, Ben},
     TITLE = {Quantum {S}atake in type {$A$}. {P}art {I}},
   JOURNAL = {J. Comb. Algebra},
  FJOURNAL = {Journal of Combinatorial Algebra},
    VOLUME = {1},
      YEAR = {2017},
    NUMBER = {1},
     PAGES = {63--125},
%      ISSN = {2415-6302},
   MRCLASS = {17B37 (14M15 17B10 18D05)},
  MRNUMBER = {3589911},
MRREVIEWER = {Jan E. Grabowski},
%       DOI = {10.4171/JCA/1-1-4},
%       URL = {https://doi.org/10.4171/JCA/1-1-4},
}

@misc{EWLocalized,
      title={Localized calculus for the {H}ecke category}, 
      author={Ben Elias and Geordie Williamson},
      year={2022},
      eprint={2011.05432},
      archivePrefix={arXiv},
      primaryClass={math.RT}
}

@incollection {ESWFrob,
    AUTHOR = {Elias, Ben and Snyder, Noah and Williamson, Geordie},
     TITLE = {On cubes of {F}robenius extensions},
 BOOKTITLE = {Representation theory---current trends and \\ perspectives},
    SERIES = {EMS Ser. Congr. Rep.},
     PAGES = {171--186},
 PUBLISHER = {Eur. Math. Soc., Z\"{u}rich},
      YEAR = {2017},
   MRCLASS = {19A22 (17B10 18A40 18D05)},
  MRNUMBER = {3644793},
MRREVIEWER = {J\"{o}rg Feldvoss},
}

@article {Soe07,
    AUTHOR = {Soergel, Wolfgang},
     TITLE = {Kazhdan-{L}usztig-{P}olynome und unzerlegbare {B}imoduln \"{u}ber
              {P}olynomringen},
   JOURNAL = {J. Inst. Math. Jussieu},
  FJOURNAL = {Journal of the Institute of Mathematics of Jussieu. JIMJ.
              Journal de l'Institut de Math\'{e}matiques de Jussieu},
    VOLUME = {6},
      YEAR = {2007},
    NUMBER = {3},
     PAGES = {501--525},
     % ISSN = {1474-7480},
   MRCLASS = {20C08 (20F55)},
  MRNUMBER = {2329762},
MRREVIEWER = {Ulrich G\"{o}rtz},
       %DOI = {10.1017/S1474748007000023},
      % URL = {https://doi-org.uchile.idm.oclc.org/10.1017/S1474748007000023},
}

@article{ShephardTodd,
	author = {Shephard, G. C. and Todd, J. A.},
	date-added = {2022-07-25 21:18:03 -0700},
	date-modified = {2022-07-25 21:18:08 -0700},
	doi = {10.4153/cjm-1954-028-3},
	fjournal = {Canadian Journal of Mathematics. Journal Canadien de Math\'{e}matiques},
	issn = {0008-414X},
	journal = {Canad. J. Math.},
	mrclass = {20.0X},
	mrnumber = {59914},
	mrreviewer = {H. S. M. Coxeter},
	pages = {274--304},
	title = {Finite unitary reflection groups},
	url = {https://doi-org.libproxy.uoregon.edu/10.4153/cjm-1954-028-3},
	volume = {6},
	year = {1954},
	bdsk-url-1 = {https://doi-org.libproxy.uoregon.edu/10.4153/cjm-1954-028-3},
	bdsk-url-2 = {https://doi.org/10.4153/cjm-1954-028-3}}

@book{Humphreys,
  title={Reflection groups and Coxeter groups},
  author={Humphreys, James E},
  number={29},
  year={1992},
  publisher={Cambridge university press}
}

@article {IM,
    AUTHOR = {Iwahori, N. and Matsumoto, H.},
     TITLE = {On some {B}ruhat decomposition and the structure of the
              {H}ecke rings of $p$-adic {C}hevalley groups},
   JOURNAL = {Inst. Hautes \'{E}tudes Sci. Publ. Math.},
  FJOURNAL = {Institut des Hautes \'{E}tudes Scientifiques. Publications
              Math\'{e}matiques},
    NUMBER = {25},
      YEAR = {1965},
     PAGES = {5--48},
      ISSN = {0073-8301},
   MRCLASS = {20.70 (14.50)},
  MRNUMBER = {185016},
MRREVIEWER = {Rimhak Ree},
       URL = {http://www.numdam.org/item?id=PMIHES_1965__25__5_0},
}

@book {GBM,
    AUTHOR = {Elias, Ben and Makisumi, Shotaro and Thiel, Ulrich and
              Williamson, Geordie},
     TITLE = {Introduction to {S}oergel bimodules},
    SERIES = {RSME Springer Series},
    VOLUME = {5},
 PUBLISHER = { \\ Springer, Cham},
      YEAR = {2020},
     PAGES = {xxv+588},
     shorthand = {EMTW20},
     % ISBN = {978-3-030-48825-3; 978-3-030-48826-0},
   MRCLASS = {20C08 (17B10 18M30 20F55)},
  MRNUMBER = {4220642},
   %    DOI = {10.1007/978-3-030-48826-0},
    %   URL = {https://doi-org.uchile.idm.oclc.org/10.1007/978-3-030-48826-0},
    
}

@article {LL,
    AUTHOR = {Libedinsky, Nicolas},
     TITLE = {Sur la cat\'{e}gorie des bimodules de {S}oergel},
   JOURNAL = {J. Algebra},
  FJOURNAL = {Journal of Algebra},
    VOLUME = {320},
      YEAR = {2008},
    NUMBER = {7},
     PAGES = {2675--2694},
    %  ISSN = {0021-8693},
   MRCLASS = {20C08 (20F55)},
  MRNUMBER = {2441994},
MRREVIEWER = {Ulrich G\"{o}rtz},
     %  DOI = {10.1016/j.jalgebra.2008.05.027},
      % URL = {https://doi-org.uchile.idm.oclc.org/10.1016/j.jalgebra.2008.05.027},
}

@article {EWhodge,
    AUTHOR = {Elias, Ben and Williamson, Geordie},
     TITLE = {The {H}odge theory of {S}oergel bimodules},
   JOURNAL = {Ann. of Math. (2)},
  FJOURNAL = {Annals of Mathematics. Second Series},
    VOLUME = {180},
      YEAR = {2014},
    NUMBER = {3},
     PAGES = {1089--1136},
      ISSN = {0003-486X},
   MRCLASS = {20C08 (20F55)},
  MRNUMBER = {3245013},
MRREVIEWER = {Chi Kin Mak},
%       DOI = {10.4007/annals.2014.180.3.6},
%       URL = {https://doi-org.uchile.idm.oclc.org/10.4007/annals.2014.180.3.6},
}

@article {Bendihedral,
    AUTHOR = {Elias, Ben},
     TITLE = {The two-color {S}oergel calculus},
   JOURNAL = {Compos. Math.},
  FJOURNAL = {Compositio Mathematica},
    VOLUME = {152},
      YEAR = {2016},
    NUMBER = {2},
     PAGES = {327--398},
     % ISSN = {0010-437X},
   MRCLASS = {20C08 (18D05)},
  MRNUMBER = {3462556},
MRREVIEWER = {Vanessa Miemietz},
    %   DOI = {10.1112/S0010437X15007587},
    %   URL = {https://doi-org.uchile.idm.oclc.org/10.1112/S0010437X15007587},
}

@article {Soergelcalculus,
    AUTHOR = {Elias, Ben and Williamson, Geordie},
     TITLE = {Soergel calculus},
   JOURNAL = {Represent. Theory},
  FJOURNAL = {Representation Theory. An Electronic Journal of the American
              Mathematical Society},
    VOLUME = {20},
      YEAR = {2016},
     PAGES = {295--374},
   MRCLASS = {20C08 (18D10 20F55)},
  MRNUMBER = {3555156},
MRREVIEWER = {Vanessa Miemietz},
     %  DOI = {10.1090/ert/481},
     %  URL = {https://doi-org.uchile.idm.oclc.org/10.1090/ert/481},
}

@article {EKo,
    AUTHOR = {Elias, Ben and Ko, Hankyung},
     TITLE = {A singular {C}oxeter presentation},
   JOURNAL = {Proc. Lond. Math. Soc. (3)},
  FJOURNAL = {Proceedings of the London Mathematical Society. Third Series},
    VOLUME = {126},
      YEAR = {2023},
    NUMBER = {3},
     PAGES = {923--996},
      ISSN = {0024-6115},
   MRCLASS = {20F55 (20C08)},
  MRNUMBER = {4563864},
}

@PhdThesis{WillThesis,
  author  = {Williamson, Geordie},
  title   = {Singular {S}oergel bimodules},
  school  = {University of Freiburg},
  year    = {2008},
  note    = {Available from author's webpage},
  mrclass = {Thesis},
  url     = {https://freidok.uni-freiburg.de/data/5093},
}

@misc {KELP3,
    Author ={Elias, Ben and Ko, Hankyung and Libedinsky, Nicolas and Patimo, Leonardo},
  title = {Singular Light Leaves},
  note = {In preparation},
  shorthand = {EKLP}
}

@book {Mathas,
    AUTHOR = {Mathas, Andrew},
     TITLE = {Iwahori-{H}ecke algebras and {S}chur algebras of the symmetric
              group},
    SERIES = {University Lecture Series},
    VOLUME = {15},
 PUBLISHER = {American Mathematical Society, Providence, RI},
      YEAR = {1999},
   %  PAGES = {xiv+188},
     % ISBN = {0-8218-1926-7},
   MRCLASS = {20C08 (20C20 20G99)},
  MRNUMBER = {1711316},
MRREVIEWER = {Meinolf Geck},
  %     DOI = {10.1090/ulect/015},
   %    URL = {https://doi-org.uchile.idm.oclc.org/10.1090/ulect/015},
}

@article {Demazure,
    AUTHOR = {Demazure, Michel},
     TITLE = {Invariants sym\'{e}triques entiers des groupes de \\ {W}eyl et
              torsion},
   JOURNAL = {Invent. Math.},
  FJOURNAL = {Inventiones Mathematicae},
    VOLUME = {21},
      YEAR = {1973},
     PAGES = {287--301},
      ISSN = {0020-9910},
   MRCLASS = {14M15 (20G05 20H15 32M10)},
  MRNUMBER = {342522},
MRREVIEWER = {S. I. Gel\cprime fand},
       DOI = {10.1007/BF01418790},
       URL = {https://doi-org.uchile.idm.oclc.org/10.1007/BF01418790},
}

@article {BGG,
    AUTHOR = {Bern\v{s}tein, I. N. and Gel\cprime fand, I. M. and
              Gel\cprime fand, S. I.},
     TITLE = {Schubert cells, and the cohomology of the spaces {$G/P$}},
   JOURNAL = {Uspehi Mat. Nauk},
  FJOURNAL = {Akademiya Nauk SSSR i Moskovskoe Matematicheskoe Obshchestvo.
              Uspekhi Matematicheskikh Nauk},
      YEAR = {1973},
    NUMBER = {no. 3(171),},
     PAGES = {3--26},
      ISSN = {0042-1316},
   MRCLASS = {14M15 (57F15)},
  MRNUMBER = {429933},
MRREVIEWER = {George\ R.\ Kempf},
}

@article {Brion,
    AUTHOR = {Brion, M.},
     TITLE = {Equivariant {C}how groups for torus actions},
   JOURNAL = {Transform. Groups},
  FJOURNAL = {Transformation Groups},
    VOLUME = {2},
      YEAR = {1997},
    NUMBER = {3},
     PAGES = {225--267},
      ISSN = {1083-4362,1531-586X},}
\end{document}